\newtheorem{theorem}{Theorem}[section]
\newtheorem{corollary}[theorem]{Corollary}
\newtheorem{lemma}[theorem]{Lemma}
\newtheorem{prop}{Proposition}[section]
\theoremstyle{remark}
\newtheorem{remark}{Remark}[section]
\def\eps{\varepsilon}
\def\R{\mathbb{R}}
\title{A priori estimates and exact solvability for non-coercive stochastic control equations}
\author{Maria Luísa Pasinato, Boyan Sirakov}
\date{}
\begin{document}
\maketitle
\begin{abstract}
    We establish, for the first time, explicit a priori and regularity estimates for solutions of the Dirichlet problem for Hamilton-Jacobi-Bellman operators from stochastic control, whose  principal half-eigenvalues have opposite signs. In addition, if the negative eigenvalue is not too negative, the  problem can have exactly two, one or zero solutions, depending on the valuation function. This is a novel exact multiplicity result for fully nonlinear equations, which also yields a generalization of the Ambrosetti-Prodi theorem to such equations.
    \end{abstract}

\section{Introduction}
\par This paper is a contribution to the study of the solution set of the Dirichlet problem
\begin{equation}
\label{Main_Dirichlet}
        F(D^2u,Du,u,x) = g \text{ in } \Omega, \qquad
        u = 0 \text{ on } \partial\Omega,
\end{equation}
where $F: \mathcal{S}^N \times \R^N \times \R \times \Omega \mapsto \R $ is a Hamilton-Jacobi-Bellman (HJB) operator with bounded measurable coefficients, $\Omega \subset \R^N$ is a bounded $C^{1,1}$-domain, and $g \in L^p(\Omega)$ for  $p>N$.

An important motivation for studying  equations of HJB type is their connection to problems of optimal control and the associated stochastic differential equations (SDE). The classical Bellman dynamic programming method stipulates that  the optimal cost (value) function  of a controlled SDE should be a solution of  \eqref{Main_Dirichlet}. For a thorough exposition of this subject we refer to the books \cite{flemingmete}, \cite{krylovbook}; here we recall that if the stochastic process $X_t$ is subjected to the law
$$
dX_t = b^{\alpha_t}(X_t) dt + \sigma^{\alpha_t}(X_t)dW_t\,,
$$
with $X_0 = x$, for some $x\in\Omega$, and we are given the cost/value function
$$
J(x,\alpha) = \mathbb{E} \int_0^{\tau_x} g^{\alpha_s}(X_t) \exp \left\{\int_0^t c^{\alpha_s}(X_s) ds\right\}\,dt,
$$
where $\tau_x$ is the first exit time from ${\Omega}$ of $X_t$, $W_t$ is the standard Brownian motion,  $b^{\alpha_t}$ are drift vectors, $\sigma^{\alpha_t}$ are positive definite diffusion matrices, $c^{\alpha_t}$ are stochastic discount rates, $g^{\alpha_t}$ are the valuation functions; all these parameters depend on  an index (control) process $\alpha_t$  with values in a set ${\cal A}$, then the optimal cost function
$v(x) = \inf_{\alpha\in {\cal A}} J(x,\alpha)$ is such that $u=-v$ is  a solution of \eqref{Main_Dirichlet} in the following particular form (setting $A^\alpha=(\sigma^\alpha)^T\sigma^\alpha$)
\begin{equation}\label{equ1}
\left\{\begin{array}{rclcc}
\displaystyle\sup_{\alpha\in {\cal A}} \{\mathrm{tr} (A^\alpha(x)D^2u) + b^\alpha(x).Du
+ c^\alpha(x) u - g^\alpha(x)\} &=& 0 &\mbox{ in }& \Omega\\ u&=&0 &\mbox{on}& \partial
\Omega.\end{array}\right.
\end{equation}
We will mostly require  that the coefficients in this PDE be bounded measurable; we stress however that all results below are new even for equations with smooth coefficients.

Most works on HJB equations deal with {\it proper} equations,
 meaning that $F$ in \eqref{Main_Dirichlet} is non-increasing in the variable $u$. In the well-known works
\cite{lions1981acta}, \cite{evans1982classical}, \cite{evans1983classical},   it was proved that a proper HJB equation of type \eqref{equ1}
has a unique smooth solution, if the coefficients are
smooth. Results on existence and uniqueness of viscosity solutions for proper HJB equations with bounded measurable ingredients can be found in \cite{ishii1989uniqueness},
\cite{crandall1992user},
\cite{caffarelli1996viscosity},   \cite{swiech1997w}, \cite{crandall1999existence}, \cite{crandall2000lp}, \cite{dongkrylov}.

\par In order to go beyond properness, at least when $F$ in \eqref{Main_Dirichlet} is positively homogeneous in the unknown function $u$ (such as the operator in \eqref{equ1} with $g^\alpha=0$), it is nowadays well understood that one may consider the so-called principal half-eigenvalues $\lambda_1^+$ and $\lambda_1^-$ of $F$ in $\Omega$
\begin{equation}\label{eigvs}
        \lambda_1^{\pm}(F,\Omega) = \sup\{ \lambda \in \mathbb{R} : \exists\;  \psi \in W^{2,p}_{\mathrm{loc}}(\Omega), \quad \pm(F[\psi]+\lambda \psi) \leq 0, \quad\pm \psi > 0 \text{ in } \Omega\,\}.
\end{equation}
Of course, if $F=L$ is linear ($|\cal A|= $1 in \eqref{equ1}), then $\lambda_1^{+}(F,\Omega) = \lambda_1^{-}(F,\Omega) = \lambda_1(L,\Omega)$ -- see \cite{berestycki1994principal}.

 The positivity of $\lambda_1^+$ and $\lambda_1^-$ (which is much more general than properness) implies existence and uniqueness of solutions to \eqref{Main_Dirichlet}. For HJB operators with smooth
coefficients this was proved in  \cite{lions1983bifurcation}, via  probabilistic and analytic
techniques. More recently an approach inspired by  the seminal work  \cite{berestycki1994principal} (on the principal eigenvalue of a linear operator), was used in \cite{birindelli2006eigenvalue},  \cite{birindelli2007eigenvalue}, \cite{QUAAS2008105} and \cite{armstrong2009principal}, where many more properties of the half-eigenvalues $\lambda_1^\pm$ and related existence and uniqueness results were established -- some of these will be recalled and used below. Also, if instead of positive, both half-eigenvalues are negative but not too distant from $0$, then the Dirichlet problem \eqref{Main_Dirichlet} is again uniquely  solvable for every right-hand side $g$ -- see \cite{armstrong2009principal}, \cite{siam2010}. These results extend to positively homogeneous HJB equations the classically known existence and uniqueness fact for linear equations given by the Fredholm alternative.

 On the other hand, the possibility of having principal half-eigenvalues with opposite signs represents a purely nonlinear configuration, one that has no linear analogue. The questions whether $\lambda_1^+ < 0 < \lambda_1^-$ also implies existence or uniqueness of solutions to \eqref{Main_Dirichlet} were both answered in the negative in \cite{sirakovnon} (see also \cite{demengel}, \cite{siam2010} for extensions). The result in \cite{sirakovnon} shows that, contrary to what happens in the linear case,  if a HJB equation is not at resonance and $0$ is in the ``nonlinear spectral gap'' $(\lambda_1^+, \lambda_1^-)$, the solution set of the Dirichlet problem is bounded but may be empty or have more than one element.

All previous works on HJB equations in the case $\lambda_1^+ < 0 < \lambda_1^-$ treat existence, non-existence and multiplicity of solutions. However, another fundamental feature in elliptic PDE theory -- explicit a priori/regularity estimates in terms of norms of the coefficients and the right-hand side $g$ -- has been absent in the literature. Our first main goal here is to provide such estimates, which also quantify the proximity of $\lambda_1^+, \lambda_1^-$ to zero. Specifically, we obtain an ABP-type $L^\infty$-estimate for the solutions of the Dirichlet problem. It is remarkable that explicit ABP-type bounds are usually associated with situations where uniqueness of solutions holds, and to our knowledge have been proven only in such cases, whereas here we prove such an estimate in the presence of multiple solutions.

Further, we also obtain estimates on the subset of the space of right-hand sides, for which solutions exist. Finally, we give essentially optimal hypotheses under which an exact multiplicity result of Ambrosetti-Prodi type can be proved for HJB equations, thus obtaining a very detailed description of the solution set. These will be explained in more detail next.

In  \cite{sirakovnon} it was suggested that there is a connection between the  nonuniqueness when $\lambda_1^+ < 0 < \lambda_1^-$  and the apparently different Ambrosetti-Prodi (AP) phenomenon,  named after the celebrated result in \cite{ambrosetti1972inversion} on a semilinear equation involving the Laplacian. It states that, given a strictly convex and Lipschitz $f\in C^2(\R)$ such that ${f'(\R)}$ intersects the spectrum of the Laplacian only at its first eigenvalue, there exists a map $t^* : C^{0,\alpha}(\Omega) \rightarrow \R$ for which
\begin{equation}\label{ambrpreq}
    \Delta u + f(u) = g= h + t\varphi_1 \text{ in } \Omega, \qquad
    u = 0 \text{ on } \partial\Omega
\end{equation}
has no solutions when $t<t^*(h)$, exactly one solution when $t=t^*(h)$, and exactly two when $t>t^*(h)$. In other words, we can split  $ C^{2,\alpha}(\Omega)\cap C_0(\overline{\Omega})=  A \cup \mathcal{C}\cup B$, $C^\alpha(\Omega)=S_0 \cup F(\mathcal{C}) \cup S_2$ where $F(\cdot)=\Delta \cdot + f(\cdot)$, in such a way that $\mathcal{C}$ is homeomorphic to a hyperplane, and both $A$ and $B$ are taken by $F$  homeomorphically to $S_2$. This {\it exact multiplicity} result (such results are not very frequent in the PDE theory)  has generated a lot of work and extensions over the years. We refer to \cite{bergerpodolak}, \cite{solimini}, \cite{rufsurvey}, \cite{calanchitomei}, as well as the many references in these works, for even more precise descriptions of the action of a general semilinear self-adjoint operator $F$.

It is worth observing that the AP result was extended to  linear operators in  non-divergence form (that is, $\Delta$ in \eqref{ambrpreq} replaced by an operator like in \eqref{equ1} with $|\cal A|= $1) only recently in \cite{sirakov2018results}. In that paper the linearity of the second-order operator was  crucial in order to divide the domain of $F$ into {\it fibers}, the analysis being also based on methods from the regularity theory of elliptic PDE in non-divergence form. We  refer to \cite{sirakov2018results} for a  historical review and further discussion of Ambrosetti-Prodi type results for linear $F$.

Here we will  show that the AP phenomenon still occurs for nonlinear HJB operators, i.e. arbitrary suprema of linear elliptic operators. This has been completely open up to now and is somewhat unexpected.  Theorem \ref{Main_Result} below is the first result whatsoever in the literature which establishes exact count of solutions (different from 0 or 1) for equations governed by fully nonlinear operators. The exact mupltiplicity also makes it possible to describe the form of the solution set. Differently from previous works on  exact multiplicity, we use topological degree theory.

To be more precise, we recall the main result in  \cite{sirakovnon}:
for a HJB operator with  $\lambda_1^+(F,\Omega) < 0 < \lambda_1^-(F,\Omega)$ and first half-eigenfunction $\phi=\varphi_1^+>0$, there exists $t^*: L^{\infty}(\Omega) \rightarrow \R$ such that
\begin{equation}
\label{Main_Decomp}
        F(D^2u,Du,u,x) = g(x)= h(x) + t\phi(x) \text{ in } \Omega, \qquad
        u = 0 \text{ on } \partial\Omega
        \tag{$\mathcal{P}_t$}
\end{equation}
has no solutions when $t<t^*(h)$, {\it at least one solution} when $t=t^*(h)$, and {\it at least two} when $t>t^*(h)$. This is what degree theory usually gives, (non-)existence of solutions, but without specifying their exact number  (we refer to the  survey \cite{bandle2004solutions} which contains many references).
We develop a variant of the degree method to yield exact multiplicity of solutions -- zero,  one or  two. We also give a general framework where $\lambda_1^+ < 0 < \lambda_1^-$ is required only from an asymptotic approximation of $F$, thus making the  Ambrosetti-Prodi theorem  a very particular instance of our results.

As the above references show, the existence of a critical number $t^*$ is nowadays very standard in AP type problems. On the other hand,
a feature completely absent from previous works, even in the simplest cases and despite its obvious importance for applications, is an explicit bound for $t^*(h)$ in terms of $h$ and the constants governing the behaviour of $F$.
Actually, as we noted above,
explicit a priori bounds (in terms of $\lambda_1^+$,$\lambda_1^-$ and $\|h\|_p$) do not seem to be available not just for $t^*(h)$ but even for  the solutions of \eqref{Main_Decomp} themselves.
We provide such estimates here,  only assuming $\lambda_1^+ < 0 < \lambda_1^-$ , without additional hypotheses on the expected number of solutions of the Dirichlet problem.\medskip

To illustrate the main theorems below, we  state them now in the frequently encountered particular  case of a control problem \eqref{equ1} in which one uses the same valuation function in all states (that is, $g^\alpha=g$ is the same for all $\alpha$). The general case of \eqref{equ1} is covered by the results in the next section, where more detailed results, examples, and discussion are given.
\begin{theorem}\label{APHJB}
    Let $g\in L^p(\Omega)$ for some $p>n$. Assume there exist $0<\lambda\leq \Lambda$, for which $A_{\alpha} \in C(\mathcal{S}^N)$ satisfy $\lambda I \leq A_{\alpha} \leq \Lambda I$, and $||b_{\alpha}||_{L^{\infty}(\Omega)}, ||c_{\alpha}||_{L^{\infty}(\Omega)},  \leq \Lambda$, for all $\alpha\in{\cal A}$. Let
    \begin{equation} \label{eq:HJB_def}
        F(D^2u,Du,u,x) = \sup_{\alpha \in \mathcal{A}} \left( \text{Tr}(A^{\alpha}(x)D^2u)+ b^{\alpha}(x).Du + c^{\alpha}(x)u\right),\;\mbox{ and assume }
    \end{equation}
   \begin{equation}\label{maincond}
        \lambda_1^+:= \lambda_1^+(F,\Omega) < 0 < \lambda_1^-(F,\Omega)=:\lambda_1^-.
    \end{equation}
  I.  There is $C_0=C_0(n,\lambda,\Lambda, \Omega)>0$ such that  the following bounds are valid for ($\mathcal{P}_t$)
     \begin{equation}\label{estimateu} \|u\|_{W^{2,p}(\Omega)}\le C_0\left(1-\frac{1}{\lambda_1^+}\right)
    \left(1+\frac{1}{\lambda_1^-}\right)\|g\|_{L^p(\Omega)}\,,
    \end{equation}
      \begin{equation}\label{estimatetstar}
    -C_0\left(1-\frac{1}{\lambda_1^+}\right)
    \left(1+\frac{1}{\lambda_1^-}\right)\|h\|_{L^p(\Omega)}\le t^*(h)\le C_0\|h^-\|_{L^p(\Omega)}.
      \end{equation}
 II. There is $ d = d(n,\lambda,\Lambda,\Omega)>0$  such that if    $
        -d < \lambda_1^+(F,\Omega) < 0 < \lambda_1^-(F,\Omega)$ then
   \begin{itemize} \item \eqref{Main_Decomp} has no solutions for $t<t^*(h)$;  \item \eqref{Main_Decomp} has either exactly one solution or a segment of ordered solutions  for  $t=t^*(h)$ ; \item  \eqref{Main_Decomp} has exactly two ordered solutions
  for  $t>t^*(h)$.\end{itemize}
\end{theorem}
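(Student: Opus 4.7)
\emph{Part I, the a priori bound \eqref{estimateu}.} I would control $u^+$ and $u^-$ by separate one-sided ABP-type estimates, each using one half-eigenvalue. For $u^+$, the negative principal half-eigenfunction $\varphi^-<0$ satisfying $F[\varphi^-]+\lambda_1^-\varphi^-=0$ serves as a comparison barrier: normalizing $\varphi^-$ and using the viscosity ABP inequality for HJB operators yields $\|u^+\|_\infty\le C(1+1/\lambda_1^-)\|g\|_p$, the factor $1+1/\lambda_1^-$ arising from rescaling the eigenvalue equation. For $u^-$, standard ABP fails since $\lambda_1^+<0$, so shift: the operator $F_\mu[v]:=F[v]+\mu v$ satisfies $\lambda_1^+(F_\mu)=\lambda_1^+(F)+\mu$, and choosing $\mu$ slightly larger than $-\lambda_1^+$ makes both half-eigenvalues positive. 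Applying the usual ABP to $F_\mu[u]=g+\mu u$ and absorbing $\mu\|u\|_p$ against the bound on $u^+$ already obtained yields $\|u^-\|_\infty\le C(1-1/\lambda_1^+)\|g\|_p$. The $W^{2,p}$ bound in \eqref{estimateu} then follows from the global Caffarelli--Escauriaza--Krylov regularity theorem for HJB equations in $C^{1,1}$ domains, once $\|u\|_{L^\infty}$ is controlled.

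\emph{Part I, bounds on $t^*$.} The upper bound $t^*(h)\le C_0\|h^-\|_p$ is obtained constructively: for $t\ge C_0\|h^-\|_p$ I build an explicit subsolution by scaling $\varphi^+$ (so that $F[s\varphi^+]=-s\lambda_1^+\varphi^+>0$ dominates the right-hand side on the negative set of $h$) together with a supersolution from a coercive auxiliary problem with positive half-eigenvalues; Perron's method for HJB then produces a solution, whence $t\ge t^*(h)$ by definition. The lower bound $t^*(h)\ge-C_0(1-1/\lambda_1^+)(1+1/\lambda_1^-)\|h\|_p$ follows by applying \eqref{estimateu} to a solution of $(\mathcal{P}_{t^*(h)})$ (which exists by \cite{sirakovnon}) and pairing with the dual eigenfunction of $F$ (the adjoint invariant measure) to isolate the $t^*\phi$-contribution from the $h$-contribution on the right-hand side.

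\emph{Part II, ordering of solutions.} I would use Leray--Schauder degree theory, legitimized by the a priori bound of Part I. The key new input is that any two solutions of $(\mathcal{P}_t)$ are strictly ordered. Given solutions $u_1,u_2$, pick a measurable selector $\bar\alpha(x)\in\mathcal{A}$ achieving the supremum in \eqref{eq:HJB_def} at $(D^2u_1,Du_1,u_1,x)$: then $F[u_1]=L^{\bar\alpha}u_1$ while $F[u_2]\ge L^{\bar\alpha}u_2$, so $w:=u_2-u_1$ satisfies $L^{\bar\alpha}w\le 0$. Since the linear operator $L^{\bar\alpha}$ has principal eigenvalue $\ge\lambda_1^+(F)>-d$ and $d$ is small, the strong maximum principle for $L^{\bar\alpha}$ forces $w$ to vanish identically or be of strict constant sign in $\Omega$. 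Hence distinct solutions are strictly ordered.

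\emph{Part II, exact count and main obstacle.} Let $u_-$ be the minimal and $u_+$ the maximal solution. A standard degree computation assigns local index $+1$ to $u_-$ (via homotopy to a large positive shift of $F$ where unique solvability holds) and $-1$ to $u_+$; the global Leray--Schauder degree over a large ball vanishes by homotopy to the parameter range $t<t^*(h)$ where no solution exists, so the indices of $u_-$ and $u_+$ already account for the global degree and no third solution can coexist as an isolated index-$0$ point when $d$ is small. At $t=t^*(h)$ the pair $(u_-,u_+)$ either coalesces to one solution, or the limit of pairs as $t\downarrow t^*(h)$ produces a whole ordered segment. The main obstacle is this exact count: the ``linearization'' $L^{\bar\alpha}$ depends on the solution through the selector $\bar\alpha$, so uniformly comparing $\lambda_1(L^{\bar\alpha})$ with $\lambda_1^+(F,\Omega)$ across the compact family of solutions, and calibrating $d=d(n,\lambda,\Lambda,\Omega)$ precisely enough to enforce a strict spectral gap that rules out intermediate solutions, is the crux.
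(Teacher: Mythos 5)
Your proposal does not match the paper's argument, and several of the key steps either have errors or would not close.

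\textbf{On the a priori bound \eqref{estimateu}.} You have the roles of $\lambda_1^+$ and $\lambda_1^-$ reversed. In the paper's Theorem \ref{ABP}, part 3 (positivity of $\lambda_1^-$) controls $\sup_\Omega u^-$, while part 4 (positivity of $\lambda_1^+$) controls $\sup_\Omega u$; here $\lambda_1^+<0$, so it is precisely the bound on $u^+$ that is missing and must be produced by a new device. Your shift argument contains a sign error: with $F_\mu[v]=F[v]+\mu v$ one has $\lambda_1^\pm(F_\mu)=\lambda_1^\pm(F)-\mu$, so to make the eigenvalues positive you need $\mu<\lambda_1^+<0$, not $\mu>-\lambda_1^+$. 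With the correct sign the ``absorption'' becomes circular: you need to bound a term of order $|\mu|\,\|u^+\|_\infty$ by the product $C\bigl(1+1/\lambda_1^+(F_\mu)\bigr)\,|\mu|$, which blows up as $\mu\to\lambda_1^+$ and is not small for $\mu$ bounded away from $\lambda_1^+$ either, since $C$ is a fixed universal constant. The paper circumvents this entirely with a genuinely new tool, Proposition \ref{estnegeig}: if $w$ is a H\"older-normalized supersolution with $F[w]\le\epsilon$, $\|w\|_\infty\ge m_0$, and $\|w^-\|_\infty\le\|\epsilon\|_p\le\epsilon_0 m_0$, then $\lambda_1^+(F,\Omega)\ge -(C_0/m_0)\|\epsilon\|_p$. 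Applying this to $w=u/\|u\|_\infty$ converts the negativity of $\lambda_1^+$ into an upper bound on $\|u^+\|_\infty$. That quantitative ``negative eigenvalue'' estimate is the technical heart of Part I and is absent from your proposal.

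\textbf{On the bounds for $t^*$.} You swap sub- and super-solutions: the upper bound $t^*(h)\le C_0\|h^-\|_p$ comes from constructing a nonnegative \emph{supersolution} of $(\mathcal{P}_t)$ (Lemma \ref{supersol_exist}, via a Pucci extremal problem), not a subsolution built from $\varphi^+$. More seriously, ``pairing with the dual eigenfunction of $F$ (the adjoint invariant measure)'' has no meaning for a fully nonlinear $F$: there is no adjoint. The lower bound on $t^*$ is again proved through Proposition \ref{estnegeig}, applied to the rescaled function $v=u_{t^*}/|t^*|$ and a carefully chosen comparison function.

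\textbf{On the ordering of solutions.} Your linearization via a measurable selector $\bar\alpha$ gives $L^{\bar\alpha}w\le 0$ with $w=0$ on $\partial\Omega$, but since $\lambda_1(L^{\bar\alpha})\ge\lambda_1^+(F)$ can be \emph{negative}, neither the comparison principle nor the strong maximum principle then yields $w\ge0$. Having the eigenvalue ``close to zero from below'' does not restore the sign. The argument that does work (Lemma \ref{ordered_sols}) restricts to a sign set $\Omega^+$ of measure at most $|\Omega|/2$ and invokes the quantitative domain-monotonicity of $\lambda_1^+$ (Theorem \ref{Monotone_Eigenvalue}) to guarantee $\lambda_1^+(F_\infty,\Omega^+)>0$ once $-\beta<\lambda_1^+(F_\infty,\Omega)$. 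Without this domain-shrinking step your argument does not close, and it is exactly what calibrates the constant $d$ in Part II.

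\textbf{On exact multiplicity.} Your ``local index $+1$ at $u_-$, $-1$ at $u_+$'' heuristic is not rigorous in the viscosity/non-differentiable setting and is not how the paper argues. The paper constructs an open set $\mathcal{O}$ of order interval type, shows $\deg(I-K_t,\mathcal{O}\cap\mathcal{B}_{R_1},0)=1$ while $\deg(I-K_t,\mathcal{B}_{R_2},0)=0$ (by homotopy to $t<t^*$), so solutions exist both inside $\mathcal{O}$ and outside it, and there are none on $\partial\mathcal{O}$. The crucial step that rules out a third solution is the convexity Lemma \ref{Continuum_Solutions}: three ordered solutions force the whole line segment between the extremal ones to be solutions, which would then have to cross $\partial\mathcal{O}$ --- a contradiction. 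Your outline has no analogue of this lemma, and the ``strict spectral gap'' you invoke would not by itself exclude intermediate solutions.
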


\noindent \textbf{Acknowledgement.} The authors are indebted to Carlos Tomei for many insightful conversations and suggestions.

\section{Main results and Examples}
\par In this section, we state our assumptions, enunciate our main results in their  general form, and comment on some immediate applications.

Given some $F,H : \mathcal{S}^N \times \R^N \times \R \times \Omega \mapsto \R$ with $F(0,0,0,x)=0$, we may require that they satisfy a {\it subset} of the following hypotheses.
\begin{enumerate}
    \item[H0($F$):] $F$ is \textit{positively homogeneous} of degree one in its first three entries: if $t\geq0$ and $ (M,p,u,x) \in \Upsilon := \mathcal{S}^N \times \R^N \times \R \times \Omega$ then
    \begin{equation*}
        F(tM,tp,tu,x) = tF(M,p,u,x).
    \end{equation*}
    \item[H1($F$):] $F$ is \textit{uniformly elliptic and satisfies a Lipschitz structure condition}:  there exist $0<\lambda < \Lambda$ and $\gamma, \delta >0$ such that  for any $(M,p,u,x),(N,q,v,x) \in \Upsilon$
  \begin{equation*}
       \mathcal{L}^-(M-N,p-q,u-v)\leq F(M,p,u,x) - F(N,q,v,x)
        \leq  \mathcal{L}^+(M-N,p-q,u-v),
    \end{equation*}
  where $\mathcal{L}^\pm(M,p,u) = \mathcal{M}^\pm_{\lambda,\Lambda}(M) \pm\gamma|p| \pm\delta|u|$, and $\mathcal{M}^\pm$ are the Pucci extremal operators.

    \item[H2($F$):]  $F(\cdot,0,0,\cdot) : \mathcal{S}^N \times \Omega \mapsto \R$ is \textit{continuous}.

    \item[C($F$):] $F$ is \textit{convex} in its first three entries:   for  $(M,p,u,x), (N,q,v,x) \in \Upsilon $ and $ \alpha \in [0,1]$
        \begin{equation*}
        F(\alpha M + (1-\alpha)N, \alpha p + (1-\alpha) q, \alpha u + (1-\alpha) v, x) \leq \alpha F(M,p,u,x) + (1-\alpha) F(N,q,v,x).
    \end{equation*}

    \item[sC($F$):] $F$ is \textit{strictly convex} in its first three entries around $u=0$: there is $\eta>0$ such that
    \begin{equation*}
        F(\alpha M + (1-\alpha)N, \alpha p + (1-\alpha) q, \alpha u + (1-\alpha) v, x) < \alpha F(M,p,u,x) + (1-\alpha) F(N,q,v,x)
    \end{equation*}
    for any $(M,p,x), (N,q,x) \in \mathcal{S}^N \times \R^N \times \Omega $
    and  $u,v\in (-\eta,0)$ or $u,v\in (0,\eta)$; $ \alpha \in (0,1)$.
    \item[D$_H(F)$:] For all $(M,p,u,x), (N,q,v,x)\in \Upsilon $
    \begin{equation*}
        F(M,p,u,x) - F(N,q,v,x) \leq H(M-N,p-q,u-v,x).
    \end{equation*}
    \item[U$_H(F)$:] There exists $A_0 \in \R$ such that  for all $ (M,p,u,x) \in \Upsilon$
    \begin{equation*}
F(M,p,u,x) \geq H(M,p,u,x) - A_0.
\end{equation*}

\end{enumerate}

When H0($F$) is valid we say $F$ satisfies H0, and similarly for the other hypotheses. When no confusion arises, we often write $F[u] = F(D^2u,Du,u,\cdot)$, for simplicity. In what follows, all differential (in)equalities are assumed to hold in the $L^p$-viscosity sense (see \cite{crandall2000lp}). We recall that if $u \in W^{2,p}_{\mathrm{loc}}(\Omega)$, satisfying an inequality in the $L^p$--viscosity sense is equivalent to satisfying it for a.e. $x \in \Omega$. We write $\|\cdot\|_p=\|\cdot\|_{L^p(\Omega)}$.

Hypothesis H0 is positive 1--homogeneity, hypothesis H1 is a joint Lipschitz continuity and uniform ellipticity assumption, and hypothesis H2 imposes continuity in the spatial variable at least on the leading terms of the operator. Together with hypothesis C (convexity), H1  characterizes the behavior of Hamilton-Jacobi-Bellman operators as in \eqref{equ1},  whereas C-H0-H1 characterize \eqref{eq:HJB_def}.  Hypothesis sC is a slightly stronger convexity assumption, which we will use only to guarantee uniqueness of solutions when $t=t^*(h)$. Hypothesis D$_H(F)$ is somewhat standard and appears naturally in many contexts; for instance, H1($F$) is equivalent to $\text{D}_{\mathcal{L}^+}(F)$ and, when H0($F$) holds, C($F$) is equivalent to $\text{D}_F(F)$ (see \cite{armstrong2009principal}).

We will generally work with an operator $F$ satisfying H1 and C.
For any such operator we can  define its asymptotic approximation (see Lemma \ref{deffinftylem} below)
\begin{equation}\label{conv1}
    F_\infty(M,p,u,x) = \sup_{t \in \R^+} \frac{1}{t} F(tM,tp,tu,x) = \lim_{t \to \infty} \frac{1}{t} F(tM,tp,tu,x)
\end{equation}
Then $F_\infty$ also satisfies H1 and C and, in addition, $F_\infty$ satisfies H0 and D$_{F_{\infty}}(F)$ holds. We will also require U$_{F_{\infty}}(F)$ which gives some uniformity in the convergence in \eqref{conv1}, as well as H2$(F_\infty)$. Thus, our standing assumption will be
\begin{enumerate}
    \item[SC($F$):] \ \
$F: \mathcal{S}^N \times \R^N \times \R \times \Omega \mapsto \R$  is such that   $H1(F),  C(F), H2(F_\infty), U_{F_{\infty}}(F)$  hold.
\end{enumerate}
For the reader's convenience we record that (SC) means  $F$ is uniformly elliptic, Lipschitz and convex,  its asymptotic approximation $F_\infty$ is such too, $F_\infty$ is  positively $1$-homogeneous in $(M,p,u)$, $F_\infty$ verifies (H2), D$_{F_{\infty}}(F)$ holds, and $F_\infty-A_0\le F\le F_\infty$ (all results here are new in the particular case $A_0=0$, $F=F_\infty$ too).
Thus, if $F$ satisfies (SC), Theorem~\ref{ABP} below applies to its asymptotic approximation $F_\infty$ and gives principal half-eigenvalues of $F_\infty$.

Note that the optimal control problem \eqref{equ1} is \eqref{Main_Dirichlet} with
$$
    F(M,p,u,x) = \sup_{\alpha \in \mathcal{A}} \left( \mathrm{tr} (A^\alpha(x)M) + b^\alpha(x).p
+ c^\alpha(x) u - f^\alpha(x) \right) + \inf_{\alpha \in \mathcal{A}} f^{\alpha}(x),
$$
$ g(x) = \inf_{\alpha \in \mathcal{A}} f^{\alpha}(x)$, and then $ F_{\infty}(M,p,u,x) = \sup_{\alpha \in \mathcal{A}} \left( \text{Tr}(A^{\alpha}(x)M)+ b^{\alpha}(x).p + c^{\alpha}(x)u \right)
$ is exactly the operator in  \eqref{eq:HJB_def}. So for this operator $F$ the assumption  (SC) means $A^{\alpha}$ are uniformly positive and continuous, whereas all other coefficients are bounded measurable, these properties being uniform in $\alpha$. Also, $A_0 = 2\sup_{\alpha \in \mathcal{A}} \|f^{\alpha}\|_{L^\infty(\Omega)}$.

We  state the general a priori bound for operators with principal half-eigenvalues of opposite sign. From now on $c_0,C_0$  denote constants depending only on $n,p,\lambda,\Lambda,\gamma,\delta$, $\Omega$.
\begin{theorem}\label{aprbd} Let $g\in L^p(\Omega)$ for some $p>n$. Assume $F$ satisfies (SC), and   \begin{equation}
    \label{opposite_sign} \lambda_1^+:= \lambda_1^+(F_{\infty},\Omega) < 0 < \lambda_1^-(F_{\infty},\Omega)=:\lambda_1^-.\end{equation}
    Then for any viscosity solution $u$ of
    \begin{equation}\label{genereq}
   -g^-(x)\le  F(D^2u, Du, u,x) \le g^+(x)\; \mbox{in }\Omega, \qquad u=0 \; \mbox{on }\partial\Omega
    \end{equation}
    we have
    \begin{equation}\label{generest}
    \|u\|_{L^\infty(\Omega)}\le C_0\left( 1-\frac{1}{\lambda_1^+}\right)\left( 1+\frac{1}{\lambda_1^-}\right) (A_0+\|g\|_{L^p(\Omega)}).
    \end{equation}
\end{theorem}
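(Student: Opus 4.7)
The plan is to first reduce to the positively $1$-homogeneous asymptotic approximation $F_\infty$, and then bound $u$ by combining a one-sided ABP estimate driven by $\lambda_1^->0$ with a comparison against the principal half-eigenfunction $\varphi_1^+$ that compensates for $\lambda_1^+<0$. The product structure $(1-1/\lambda_1^+)(1+1/\lambda_1^-)$ in \eqref{generest} will arise from multiplying the two resulting estimates.

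The reduction is immediate. Setting $(N,q,v)=(0,0,0)$ in $D_{F_\infty}(F)$ (using $F(0,0,0,x)=F_\infty(0,0,0,x)=0$) gives $F[u]\le F_\infty[u]$, and $U_{F_\infty}(F)$ gives $F[u]\ge F_\infty[u]-A_0$. Hence $u$ also satisfies
\[
-g^-\le F_\infty[u]\le g^++A_0 \text{ in } \Omega, \qquad u=0 \text{ on } \partial\Omega,
\]
so it suffices to prove the bound for the positively $1$-homogeneous, convex operator $F_\infty$, with $\|g\|_p$ replaced by $A_0+\|g\|_p$. For this operator, $\lambda_1^-(F_\infty)>0$ gives, via Theorem~\ref{ABP} (in the spirit of \cite{armstrong2009principal,siam2010}), a one-sided ABP estimate controlling the side of $u$ on which the negative principal eigenfunction of $F_\infty$ lives:
\[
\|u^-\|_{L^\infty(\Omega)}\le \frac{C_0}{\lambda_1^-}(A_0+\|g\|_p).
\]

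To bound the other side I would use the principal positive half-eigenfunction $\varphi:=\varphi_1^+>0$ of $F_\infty$, satisfying $F_\infty[\varphi]+\lambda_1^+\varphi=0$ in $\Omega$, $\varphi=0$ on $\partial\Omega$. Setting $M:=\sup_\Omega(u/\varphi)\in[0,\infty)$ (finite by the Hopf lemma applied to both $u$ and $\varphi$) and decomposing $u=u_1+M\varphi$, subadditivity of the convex, $1$-homogeneous $F_\infty$ yields
\[
F_\infty[u_1]\ge F_\infty[u]-F_\infty[M\varphi]\ge -g^-+M\lambda_1^+\varphi,
\]
with $u_1\le 0$ in $\Omega$, $u_1=0$ on $\partial\Omega$, and $u_1=0$ at some interior contact point $x_0$. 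The ABP of the previous step, applied to $u_1$, then controls $M$ in terms of $|\lambda_1^+|^{-1}(A_0+\|g\|_p)$, up to a term absorbed into $\|u_1\|_\infty$, which was itself already estimated. Combining gives $\|u\|_{L^\infty}\le\|u_1\|_{L^\infty}+M\|\varphi\|_{L^\infty}$ with the product factor $(1-1/\lambda_1^+)(1+1/\lambda_1^-)$.

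The main obstacle is precisely this last step: subadditivity gives only an inequality, not an equation, for $u_1$, and its right-hand side carries the wrong-sign term $M\lambda_1^+\varphi<0$. The resolution will be to choose $M$ as the extremal touching coefficient so that $u_1$ kisses $0$ from below at the interior point $x_0$; there the supremum defining $F_\infty$ is realized by a specific linear Bellman operator, and linearizing at $x_0$ makes the comparison exact, allowing the $\lambda_1^-$-ABP to close the argument. Careful bookkeeping of constants -- a $1/\lambda_1^-$ factor from the ABP combined with a $1/|\lambda_1^+|$ factor from the touching argument -- then produces the stated product form.
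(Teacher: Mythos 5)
Your reduction to the positively $1$-homogeneous approximation $F_\infty$ (absorbing $A_0$ into the data) and the bound on $u^-$ via Theorem~\ref{ABP}(3), driven by $\lambda_1^->0$, coincide with the paper's first two steps. The gap is in the treatment of $u^+$, and it is a real one. Setting $M=\sup_\Omega(u/\varphi_1^+)$ and $u_1=u-M\varphi_1^+$, sub-additivity of the convex $1$-homogeneous $F_\infty$ gives only the \emph{lower} bound $F_\infty[u_1]\geq F_\infty[u]+M\lambda_1^+\varphi_1^+$; it does not give the \emph{upper} bound on $F_\infty[u_1]$ that Theorem~\ref{ABP}(3) requires, and for the reverse inequality you would need $F_\infty[-M\varphi_1^+]$, which is not $-MF_\infty[\varphi_1^+]$. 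Your proposed remedy---``linearizing at $x_0$''---produces pointwise information at the contact point, not a global estimate, and it does not turn the lower bound into an upper bound. There is also a circularity: any estimate on $M$ extracted this way involves $\|u_1\|_\infty$, hence $\|u\|_\infty$, which is the quantity being estimated. Finally, the contact may occur only on $\partial\Omega$ (where both $u$ and $\varphi_1^+$ vanish and the ratio is controlled by normal derivatives), a case the sketch does not address.

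The paper closes this step with a different device, Proposition~\ref{estnegeig}, which is a quantitative \emph{lower bound on a possibly negative first eigenvalue}: if $w$ is a supersolution with controlled H\"older norm, $\|w\|_\infty\geq m_0$ and $\|w^-\|_\infty\leq\|\epsilon\|_p\leq\epsilon_0 m_0$, then $\lambda_1^+(F,\Omega)\geq-(C_0/m_0)\|\epsilon\|_p$. The proof pushes $w$ above a multiple of $d(x)$ via the global weak Harnack inequality (Theorem~\ref{SMP}) and then \emph{constructs explicitly} a positive supersolution of $(F+C_1\delta\|\epsilon\|_p)$, from which the definition \eqref{eigvs} gives the bound. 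In the proof of Theorem~\ref{aprbd} this is applied to $w=u/\|u\|_\infty$: either $\|u^+\|_\infty<\epsilon_0^{-1}C_0(1+1/\lambda_1^-)\|g\|_p$ and we are done, or $w$ has H\"older norm $\leq N_0$, sup-norm one, and $\|w^-\|_\infty\leq\|\epsilon\|_p\leq\epsilon_0$ where $\epsilon=C_0(1+1/\lambda_1^-)\hat\epsilon^+$ with $\hat\epsilon=g/\|u\|_\infty$; Proposition~\ref{estnegeig} then yields $|\lambda_1^+|\leq C_0(1+1/\lambda_1^-)\|g^+\|_p/\|u\|_\infty$, which rearranges to \eqref{generest}. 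This route never needs an upper bound on $F_\infty[u_1]$, avoids the circularity, and produces the product $(1-1/\lambda_1^+)(1+1/\lambda_1^-)$ mechanically. If you want to salvage your touching idea, you would need to replace the ABP step by the quantitative SMP (Theorem~\ref{SMP}) applied to $-u_1\geq 0$ and then disentangle $M$ from $\|u\|_\infty$, which in effect re-derives Proposition~\ref{estnegeig}.
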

Combining  \eqref{generest} with standard $W^{2,p}$ estimates which hold for solutions (see Theorem \ref{W2P}) yields $W^{2,p}$ bounds like  \eqref{estimateu}.
\begin{remark} The second inequality in \eqref{genereq} alone implies the expected ABP-bound for $u^-$ (Theorem \ref{ABP} 3. below), while the first inequality alone implies no bound at all ($u=k\varphi_1^\pm$ solves $F[u]\ge0$ for any $k\in\mathbb{R}$).
We will actually prove a  more precise a priori estimate, showing that the terms containing $\lambda_1^\pm$ in \eqref{generest} multiply only $\|g^+\|$, and even that there is $\sigma_0>0$ depending only on $n,p,\lambda, \Lambda, \gamma, \delta$, and $\Omega$  such that for every $\rho\in(0,1]$
\begin{equation}\label{moregenerest}
\|u\|_\infty\le \rho\|g^-\|_p + C_0 \rho^{-\sigma_0}\left( 1-\frac{1}{\lambda_1^+}\right)\left( 1+\frac{1}{\lambda_1^-}\right) (A_0+\|g^+\|_p).
\end{equation}
This also  quantifies the fact that the only solution of $F_\infty[u]\le 0$ is $u=0$ (see Lemma \ref{heq0}).
\end{remark}

To describe better the solution set of \eqref{Main_Dirichlet} we decompose the right-hand side
\begin{equation}
\label{Main_Decomp1}
        F(D^2u,Du,u,x) = h(x) + t\phi(x) \text{ in } \Omega, \qquad
        u = 0 \text{ on } \partial\Omega
        \tag{$\mathcal{P}_t$}
\end{equation}
Here, $h \in L^p(\Omega)$ for some $p>N$ and $t \in \R$, where $\phi=\varphi_1^+\in W^{2,p}(\Omega) $, $\phi>0$,  is the first eigenfunction of $F_\infty$ given by Theorem \ref{ABP} below, normalized so that $\|\phi\|_{L^\infty(\Omega)}=1$. We will denote the  solution set of \eqref{Main_Decomp} by $\mathcal{S}_t $ and set $\mathcal{S}_I = \bigcup_{t \in I} \mathcal{S}_t$ for $I \subset \R$. Whenever we need to deal with \eqref{Main_Decomp} for different $h$, we specify by writing $(\mathcal{P}_{t,h})$, $\mathcal{S}_{t,h}$ and $\mathcal{S}_{I,h}$. In practice it is convenient to restrict $h$ to a hyperplane in $L^p(\Omega)$ (for instance $\int_\Omega h=0$) or to a manifold homeomorphic to that plane (for instance $\|h^-\|_p = \|h^+\|_p$ which balances nicely the $L^p$-norms in the inequalities) and then study the ``minimal height" $t^*(h)$ for which \eqref{Main_Decomp1} has solutions.

 The following theorem estimates the extremal value $t^*(h)$.

\begin{theorem}\label{aprbdtstar} Let $h\in L^p(\Omega)$ for some $p>n$. Assume $F$ satisfies (SC) and  $$   \lambda_1^+:= \lambda_1^+(F_{\infty},\Omega) < 0 < \lambda_1^-(F_{\infty},\Omega)=:\lambda_1^-.$$
    The problem \eqref{Main_Decomp1} has a solution if and only if
   $t\in[t^*(h), \infty)$, with
    \begin{equation}\label{generesttstar}
   -C_0\left(1-\frac{1}{\lambda_1^+}\right)
    \left(1+\frac{1}{\lambda_1^-}\right)(A_0+\|h\|_{L^p(\Omega)})\le t^*(h)\le C_0\|h^-\|_{L^p(\Omega)}.
    \end{equation}
\end{theorem}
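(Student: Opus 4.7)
My plan proceeds in three stages: (A) showing the solvability set $T(h):=\{t\in\R:(\mathcal{P}_t) \text{ admits a viscosity solution}\}$ is a closed upward ray $[t^*(h),\infty)$; (B) the upper bound on $t^*(h)$; (C) the lower bound.

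For (A), I verify upward-closure, closedness, and nonemptiness of $T(h)$. Upward-closure: a solution $u_t$ at level $t$ is a subsolution of $(\mathcal{P}_{t'})$ for $t'>t$ (since $\phi\ge 0$); a supersolution is $M\varphi_1^-$ with $M>0$ large, where $\varphi_1^-<0$ is the principal half-eigenfunction of $F_\infty$ associated to $\lambda_1^-$, since $F[M\varphi_1^-]\le F_\infty[M\varphi_1^-]=-M\lambda_1^-\varphi_1^-$ dominates $h+t'\phi$ for $M$ large by the Hopf bound $-\varphi_1^-\ge c\,d(\cdot,\partial\Omega)$; Perron then yields a solution at $t'$. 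Closedness follows from Theorem~\ref{aprbd} combined with $W^{2,p}$-compactness (Theorem~\ref{W2P}); nonemptiness and bounded-belowness come from (B) and (C).

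For (B), I construct a solution at some $t_0\le C_0\|h^-\|_p$ by Perron: the subsolution is $M\varphi_1^-$, and the supersolution is obtained from a function $v\ge 0$ solving a suitable linear majorant of $F$ with right-hand side controlling $h^-$. The explicit scaling $t_0\asymp\|h^-\|_p$ comes from the ABP estimate for $v$, uniformly in $\lambda_1^\pm$.

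For (C), the heart of the argument: given a solution $u$ of $(\mathcal{P}_t)$ with $t\le 0$, I pass to the linearization of $F_\infty$ at $\phi$. Writing $F_\infty=\sup_\alpha L^\alpha$ and letting $\alpha^*(x)$ attain the supremum for $\phi$, the operator $L:=L^{\alpha^*(x)}$ is linear uniformly elliptic with eigenpair $(\lambda_1^+,\phi)$; its adjoint $L^*$ admits a positive eigenfunction $\phi^*$ with $L^*\phi^*=-\lambda_1^+\phi^*$ in $\Omega$, $\phi^*=0$ on $\partial\Omega$ (Berestycki--Nirenberg--Varadhan). By convexity, $F_\infty[u]\ge L[u]$; combined with $F\ge F_\infty-A_0$ (from U$_{F_\infty}(F)$) and integrated against $\phi^*$,
\[
\int_\Omega F[u]\phi^*\,dx+A_0\int_\Omega\phi^*\,dx \;\ge\; \int_\Omega F_\infty[u]\phi^*\,dx \;\ge\; -\lambda_1^+\int_\Omega u\phi^*\,dx.
\]
Substituting $F[u]=h+t\phi$ and bounding $|\int u\phi^*|\le\|u\|_\infty\int\phi^*$ gives $|t|\le C\bigl(|\lambda_1^+|\,\|u\|_\infty+A_0+\|h\|_p\bigr)$. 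To close the argument without reintroducing $|t|$ on the right, I substitute the refined estimate \eqref{moregenerest} for $\|u\|_\infty$: the large factor $(1-1/\lambda_1^+)(1+1/\lambda_1^-)$ multiplies only $\|g^+\|_p\le\|h^+\|_p$, while the $|t|$-contribution enters $\|g^-\|_p\le\|h^-\|_p+|t|$ only through the free parameter $\rho\in(0,1]$. Choosing $\rho\asymp 1/(|\lambda_1^+|+1)$ and using the identity $|\lambda_1^+|(1-1/\lambda_1^+)=|\lambda_1^+|+1$ absorbs the $|t|$-terms on the right and yields the claimed bound. The principal obstacle is precisely this absorption: the cruder bound \eqref{generest} applied directly leaves a term $|\lambda_1^+|(1-1/\lambda_1^+)(1+1/\lambda_1^-)|t|$ on the right, of unit order in $|\lambda_1^+|$, that cannot be dominated by $|t|$ on the left in generic regimes --- it is only the finer splitting of \eqref{moregenerest}, where the eigenvalue-dependent factors decorate only the $g^+$-norm, that permits a clean conclusion.
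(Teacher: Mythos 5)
Your strategy is genuinely different from the paper's. In the paper, the lower bound on $t^*(h)$ comes from a direct primal argument: rescaling $v=u/|t|$, showing $v$ is bounded away from zero via a Hopf-type comparison with a fixed solution $\psi$ of $F_0[\psi]=-\phi$, and then invoking Proposition~\ref{estnegeig}, a quantitative lower bound on a possibly negative principal half-eigenvalue in terms of the existence of an almost-supersolution. Your plan replaces this with a duality argument: linearize $F_\infty$ at $\phi$, test against the adjoint principal eigenfunction $\phi^*$, and absorb the $|t|$-term via the refined two-parameter estimate \eqref{moregenerest}. That absorption step is, in fact, a nice observation and you are right that the eigenvalue-dependent factor multiplying only $\|g^+\|_p$ is what makes the bootstrap close; the identity $|\lambda_1^+|(1-1/\lambda_1^+)=|\lambda_1^+|+1$ together with the a priori bound $|\lambda_1^+|\le\delta$ (Theorem~\ref{ABP}.1) even yields a slightly stronger estimate than the one stated. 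However, the duality step hides several serious gaps in the generality the paper works in.

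First, in parts (A) and (B) the sub/supersolution assignments are reversed or invalid. A solution $u_t$ of $(\mathcal{P}_t)$ satisfies $F[u_t]=h+t\phi\lneqq h+t'\phi$ for $t'>t$, so $u_t$ is a (strict) \emph{supersolution} of $(\mathcal{P}_{t'})$, not a subsolution; this is exactly Remark~\ref{remsup} in the paper. More seriously, $M\varphi_1^-$ (with $\varphi_1^-<0$) is not a valid barrier on either side: $F[M\varphi_1^-]\ge F_\infty[M\varphi_1^-]-A_0=-M\lambda_1^-\varphi_1^--A_0$, which is large \emph{positive} in the interior, so it can never be a supersolution; and as a subsolution it fails near $\partial\Omega$, where $|\varphi_1^-|\asymp d\to 0$ while $h\in L^p$ (and also $A_0$) need not vanish, so the inequality $F[M\varphi_1^-]\ge h+t'\phi$ breaks down pointwise a.e.\ on $\{h^+>-A_0\}$ near the boundary, no matter how large $M$ is. The paper's Lemma~\ref{subsol_exist}, which solves $F_\infty[\underline v]=M+h^+$ and uses Theorem~\ref{Positive_Existence} (with $\lambda_1^->0$), is precisely the device that absorbs $h^+$; a bare eigenfunction cannot.

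Second, in part (C) the adjoint-eigenfunction step is not available at the paper's level of generality. The standing assumption (SC) permits merely bounded measurable lower-order coefficients (and only continuity of the leading coefficients, via H2($F_\infty$)). You need (i) a measurable selection $\alpha^*(x)$ attaining the supremum defining $F_\infty[\phi](x)$, which already requires compactness/continuity structure on the index set that is not assumed; (ii) existence, positivity, and enough regularity of the adjoint eigenfunction $\phi^*$ for a non-divergence operator $L=L^{\alpha^*(x)}$ with rough coefficients -- $L^*$ is only defined distributionally and $\phi^*$ is in general just a Radon measure or a low-integrability density (Bauman, Fabes--Stroock), not a function you can freely integrate against; and (iii) a justification of the Green-type identity $\int_\Omega L[u]\phi^*=\int_\Omega u\,L^*[\phi^*]$ for $u\in W^{2,p}$ and this rough $\phi^*$, which is delicate precisely because $\phi^*$ is not a classical weak eigenfunction. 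None of these is addressed, and they are not cosmetic: the whole point of the paper's Proposition~\ref{estnegeig} is to produce a primal, purely maximum-principle argument that makes the quantitative lower bound on $\lambda_1^+$ available without any reference to the adjoint. In the classical smooth HJB setting your argument can be made rigorous and is an interesting alternative route; for the theorem as stated, the adjoint step is a genuine gap.
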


Finally, we state the extension of the Ambrosetti-Prodi result to fully nonlinear  equations.
\begin{theorem}
\label{Main_Result}
    Let $h\in L^p(\Omega)$ for some $p>n$. Assume $F$ satisfies (SC). Then there is $\beta = \beta(n,p,\lambda,\Lambda,\gamma,\delta,\Omega) >0$ for which \begin{equation}
    \label{eq:lambdas_change_sign} -\beta < \lambda_1^+(F_{\infty},\Omega) < 0 < \lambda_1^-(F_{\infty},\Omega),\end{equation} implies that
    \begin{enumerate}
        \item when $t < t^*(h)$, \eqref{Main_Decomp} has no solutions;
        \item when $t > t^*(h)$, \eqref{Main_Decomp} has exactly two strictly ordered solutions $\underline{u}_t < \overline{u}_t$;
        \item when $t=t^*(h)$, the solution set $\mathcal{S}_t$ is the line segment connecting  $\underline{u} = \lim_{t\searrow t^*} \underline{u}_t$ and $\overline{u} = \lim_{t\searrow t^*} \overline{u}_t$. If  sC(F) holds, \eqref{Main_Decomp} has exactly one solution $\underline{u} =\overline{u}$.
    \end{enumerate}
   Moreover, the map $h \mapsto t^*(h)$ is continuous from $L^p(\Omega)$ to $\R$, and the maps $t \mapsto \underline{u}_t$ and $t \mapsto \overline{u}_t$ are continuous from $(t^*,+\infty)$ to $C^1(\overline{\Omega})$. The map  $t \mapsto \underline{u}_t$ is strictly pointwise decreasing, while $t \mapsto \overline{u}_t$ is strictly increasing in the (measure) sense of Proposition \ref{upper_curve_nondecreasing} below; also, $\max_K \underline{u}_t\to-\infty$ and $\min_K \overline{u}_t\to+\infty$ as $t\to\infty$, for each compact $K\subset\Omega$, and $\underline{u}_t<0$, $\overline{u}_t>0$ in $\Omega$ for large $t$.
\end{theorem}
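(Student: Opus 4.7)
The plan is to use Theorems~\ref{aprbd} and~\ref{aprbdtstar} as the quantitative backbone, and combine them with Leray-Schauder topological degree (applied in $C_0^1(\overline{\Omega})$ via a compact fixed-point reformulation) and a Perron-type construction of a minimal solution. Theorem~\ref{aprbdtstar} already yields conclusion (1) and guarantees existence of at least one solution of \eqref{Main_Decomp} for every $t\ge t^*(h)$, while Theorem~\ref{aprbd} confines $\mathcal{S}_{[t^*(h),T]}$ to a bounded ball in $W^{2,p}(\Omega)\hookrightarrow C^1(\overline\Omega)$ for each $T>t^*(h)$.

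For $t>t^*(h)$ I would first construct the minimal solution $\underline u_t$ by a Perron argument: fix $t_0\in(t^*(h),t)$; any solution of $(\mathcal{P}_{t_0,h})$ is a strict subsolution of $(\mathcal{P}_t)$ since $\phi>0$, and any solution of $(\mathcal{P}_t)$ is a supersolution. Convexity C($F$) makes the pointwise minimum of finitely many supersolutions again a supersolution, and the comparison principle for the proper shift of $F$ allows the standard monotone iteration. The strong maximum principle, applied to the linear equation satisfied by the difference $\underline u_s-\underline u_t$ (which holds by subtracting along the segment and using C($F$)), yields the strict pointwise monotonicity $t\mapsto\underline u_t$.

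For the second solution I plan a degree argument. The a priori bound \eqref{generest} locates $\mathcal{S}_{t,h}$ in a fixed ball $B_R\subset C_0^1(\overline\Omega)$, so the Leray-Schauder degree $\deg(I-T_{t,h},B_R,0)$ is well defined, where $T_{t,h}$ inverts the Pucci principal part against the data built from $h+t\phi$ and the lower-order terms. A homotopy in $t$ together with the non-solvability for $t<t^*(h)$ from Theorem~\ref{aprbdtstar} shows this total degree is zero. Convexity C($F$) implies that $F$ lies below its linearization $L_{\underline u_t}$ at $\underline u_t$, and the smallness $-\beta<\lambda_1^+(F_\infty,\Omega)$ is precisely what forces $\lambda_1(L_{\underline u_t})>0$, making the local Leray-Schauder index at $\underline u_t$ equal $+1$. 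Vanishing of the total degree then produces a further solution $\overline u_t$ with local index $-1$; the strong maximum principle and C($F$) give $\overline u_t>\underline u_t$.

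The main obstacle is the exact count of \emph{two} solutions for $t>t^*(h)$ and the fine description at $t=t^*(h)$. For the exact count, I would argue by contradiction: a hypothetical third ordered solution $u$ would produce, after pairwise subtraction, nonzero bounded functions $w_1=u-\underline u_t$ and $w_2=\overline u_t-u$ of constant sign, each satisfying a linear elliptic equation with bounded measurable coefficients obtained by freezing $F$ along the relevant segments. Using the half-eigenvalue characterization recalled in Theorem~\ref{ABP} together with the smallness $-\beta<\lambda_1^+$, the principal eigenvalues of these frozen linearizations remain strictly away from zero, contradicting the existence of a positive solution with zero boundary data. For $t=t^*(h)$ I pass to monotone limits: by Theorem~\ref{W2P} and Theorem~\ref{aprbd} the sequences $\underline u_{t_n}\nearrow\underline u$ and $\overline u_{t_n}\searrow\overline u$ (with $t_n\searrow t^*$) converge in $C^1(\overline\Omega)$ to two solutions of $(\mathcal{P}_{t^*})$, and convexity C($F$) forces every convex combination $s\underline u+(1-s)\overline u$ to be simultaneously a sub- and supersolution, hence a solution, which produces the line-segment structure; sC($F$) collapses this segment to a single point. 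Finally, the continuity of $h\mapsto t^*(h)$ and of the two branches follows from the monotonicity of the branches, the uniform a priori bounds \eqref{estimateu}-\eqref{estimatetstar}, and stability of $L^p$-viscosity solutions under uniform convergence of the right-hand side; the asymptotics $\max_K\underline u_t\to-\infty$ and $\min_K\overline u_t\to+\infty$ as $t\to\infty$ are obtained by testing the equation against $\phi$ and invoking the a priori lower bounds involving $\lambda_1^\pm$ from Theorem~\ref{aprbd}.
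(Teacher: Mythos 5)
Your outline agrees with the paper's on several peripheral points (a priori bounds from Theorems~\ref{aprbd} and \ref{aprbdtstar}, Perron construction of a minimal branch, degree in $C_0^1(\overline\Omega)$, continuity of $t^*$ via stability of $L^p$-viscosity solutions), but the three steps that actually carry the theorem contain genuine gaps, and the role you assign to the smallness parameter $\beta$ is not the one that makes the proof work.

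First, the local-index argument at $\underline u_t$ via a linearization $L_{\underline u_t}$ is not available: $F$ is a general HJB operator, hence Lipschitz and convex but typically nowhere differentiable, so no linearization exists (only the two-sided Pucci bounds of H1). Even granting some pseudo-linearization, nothing in the hypotheses forces $\lambda_1(L_{\underline u_t})>0$. In the paper the smallness $-\beta<\lambda_1^+(F_\infty,\Omega)$ is used in a completely different way: through Theorem~\ref{Monotone_Eigenvalue} it yields $\lambda_1^+(F_\infty,\Omega')>0$ on any subdomain $\Omega'$ of measure $\le|\Omega|/2$, which is what proves Lemma~\ref{ordered_sols} (any two solutions are ordered) and Lemma~\ref{upper_curve_nondecreasing} (the upper branch increases on at least half of $\Omega$). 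Your proposal never invokes ordering of $\mathcal{S}_t$, yet this is the structural input that everything else leans on.

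Second, the exactness count (at most two solutions for $t>t^*$) cannot be extracted from ``frozen linearizations'' with eigenvalues bounded away from zero: the difference of two solutions of a convex HJB equation satisfies only a Pucci differential inequality, and such an object having constant sign and zero boundary values does not contradict $\lambda_1>0$ of anything specific. The paper's mechanism is different and crucially uses Lemma~\ref{Continuum_Solutions}: by convexity (Corollary~\ref{convex_subsuper}) plus the Hopf lemma, any three ordered solutions are colinear, so $\mathcal{S}_t$ becomes a full segment; but the set $\mathcal{O}$ built in Lemma~\ref{multiplicity} separates $\mathcal{S}_t$ into two nonempty parts while carrying no solutions on $\partial\mathcal{O}$, so a segment cannot occur. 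This separator, and the fact that $K_t(\overline{\mathcal{O}})\subset\mathcal{O}$, is the part of the degree argument you would actually need, and it is missing from the proposal.

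Third, at $t=t^*$ you assert that convexity forces every convex combination $s\underline u+(1-s)\overline u$ to be simultaneously a sub- and supersolution. That is false: Corollary~\ref{convex_subsuper} gives only \emph{super}solutions for $s\in[0,1]$; convex combinations become solutions only once one interior value already is a solution. To get the segment at $t^*$ one needs a further degree computation (Lemma~\ref{notisolated}) showing that $u_*$ cannot be an isolated solution when $u_*\ne u^*$, which creates a third solution and then triggers the colinearity Lemma~\ref{Continuum_Solutions}. The asymptotics ``testing against $\phi$'' is also not how the paper proceeds; it normalizes $w_t=u_t/t$, passes to the limit in $F_\infty[w_t]\to\phi$, and identifies the two limit profiles $w_*<0$ and $w^*=-\phi/\lambda_1^+>0$.
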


 Our arguments suggest interpreting half-eigenvalues as merely a sort of homogeneous skeleton of a nonlinearity. As an example, consider the semilinear operator
\begin{equation}\label{ap1}
        F(M,u) = \text{Tr}(M) + f(u),
\end{equation}
for some Lipschitz and convex $f$ (this is the classical Ambrosetti-Prodi case). Then
\begin{equation}\label{ap2}
    F_{\infty}(M,u) = \text{Tr}(M) - au^- + bu^+
\end{equation}
where $a = \inf f'(\R)$ and $b = \sup f'(\R)$. If we assume that $a<\lambda_1 = \lambda_1(-\Delta, \Omega)<b$, then
\begin{equation}\label{ap3}
   \lambda_1^+(F_{\infty}) =  \lambda_1 -
b <0, \qquad \lambda_1^-(F_{\infty}) =\lambda_1 - a>0,
\end{equation}
so Theorem~\ref{Main_Result} applied to \eqref{ap1} contains the classical Ambrosetti-Prodi result. More generally, if $F(M,p,u,x) = \text{Tr}(A(x)M) + b(x) \cdot p + c(x)u + f(u)$,
Theorem \ref{Main_Result}  yields the main result from \cite{sirakov2018results}.

The  Fu\v{c}ik operator \eqref{ap2} can be used to explain why the appearance of the product $|\lambda_1^+\lambda_1^-|^{-1}$ in the estimates \eqref{generest}-\eqref{generesttstar} is natural. If $u$ is a solution of the Dirichlet problem for
$$
\Delta u+bu^+-au^- = h,\quad \mbox{ or }\quad \Delta u+bu^+= \widetilde{h}_u:= h+au^- \quad \mbox{in }\Omega,
$$
the first of these implies $\|u^-\|_\infty\le C_0(\lambda_1-a)^{-1}\|h\|_p$ (see Theorem \ref{ABP} 3. below) while the second implies $\|u^+\|_\infty\le C_0(b-\lambda_1)^{-1}\|\widetilde{h}_u\|_p$ for $b$ close to $\lambda_1$ (this is a variant of the {\it anti-maximum principle}, see \cite{hess}, \cite{birindelliantimax}, \cite{armstrong2009principal}). Of course, for general $F$ there is no such simple argument, the proofs of Theorems \ref{aprbd}-\ref{aprbdtstar} rely on a quantitative estimate on a possibly negative first eigenvalue, which is interesting in its own right  (and appears to be new even in the linear case)-- see Proposition~\ref{estnegeig} below.

We next discuss the hypotheses of Theorem \ref{Main_Result} and their optimality. Actually, their necessity can already be observed in the (simplest) semilinear case \eqref{ap1}-\eqref{ap3}.

First,  an exact multiplicity result like Theorem \ref{Main_Result} cannot be obtained without a restriction as in \eqref{eq:lambdas_change_sign}: if $\beta>\lambda_2(-\Delta, \Omega)-\lambda_1(-\Delta, \Omega)$, that is,  $f^\prime(\mathbb{R})$ contains more than one eigenvalue of the Laplacian, the Dirichlet problem for $\Delta u + f(u) = g$ can have more than two solutions -- see \cite{solimini} and the references there. For  $F$ in non-divergence form the optimal value of $\beta$ is not known, even in the linear case (but a lower bound on $\beta$ in terms of the constants in the standard elliptic estimates can be obtained).

Further, without an additional hypothesis as in Theorem \ref{Main_Result} 3. it is indeed possible to have infinity (a segment) of solutions at the critical value $t=t^*(h)$. This is the case for instance if in \eqref{ap1}-\eqref{ap3} we have  $f^\prime = \lambda_1$ in a left or right neighborhood of $0$. Indeed, if $f(u) = \lambda_1u +l$ for $u\in (0,L)$, then $t\varphi_1$ solves $\Delta u + f(u) = -l$ for $t\in (0,L)$.

Finally, we observe that convexity is in general unavoidable for a result like Theorem \ref{Main_Result} to hold. In \cite{calanchitomeizaccur} it was shown that even if $f^{\prime\prime}$ is negative just at one point, there are right hand sides for which the Dirichlet problem for \eqref{ap1}  has at least four solutions.

We  note that it should be only a matter of technicalities to weaken the  integrability assumption on $g$ (resp. $h$), to $g\in L^p(\Omega)$ for $p>p_E$ where $p_E=p_E(n,\lambda, \Lambda)\in (n/2,n)$ is the constant from \cite{Esc}, and $g\in L^p$ for $p>n$ only in a neighborhood of $\partial \Omega$ (to guarantee Lipschitz and Hopf type estimates at the boundary). Also, the continuity assumption on $F_\infty(M,0,0,x)$ can be weakened to a VMO-type hypothesis (as in  \cite{swiech1997w}, \cite{crandall2000lp}, \cite{winter2009w}, \cite{dongkrylov}), which we have not written, for simplicity.

The next section is preliminary and contains a few already known results which we need. The proofs of our results can be found in Section \ref{secproofs}.

\section{Preliminaries}

 We start with
 the following global $W^{2,p}$ estimate, which follows from \cite[Theorem 1.2]{dongkrylov}(see also Remarks 1.4-1.6 in that paper, as well as  \cite{swiech1997w}, \cite{crandall2000lp}, \cite{winter2009w} for earlier results).

\begin{theorem}[\cite{dongkrylov}]
\label{W2P}
Assume (SC). There exists a constant $C = C(n,p,\lambda,\Lambda, \gamma, \delta, A_0, \Omega)$ such that, if $u$ is a (viscosity) solution of \eqref{Main_Dirichlet}
for some $g \in L^p(\Omega)$, then $u \in W^{2,p}(\Omega)$ and
\begin{equation}\label{regest}||u||_{W^{2,p}(\Omega)} \leq C(||u||_{L^\infty(\Omega)} + ||g||_{L^p(\Omega)}).
\end{equation}
\end{theorem}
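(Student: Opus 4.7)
The statement is essentially a global $W^{2,p}$ regularity result for convex HJB viscosity solutions with continuous leading nonlinearity and bounded measurable lower-order terms. My plan is to reduce it to two well-established components, interior and boundary $W^{2,p}$ estimates, and then glue them by a covering argument. The guiding principle is that, since the lower-order terms enter through the Lipschitz structure condition H1, once $u\in L^\infty$ is under control they can be absorbed into an effective right-hand side of the form $\tilde g = g - b(x)\cdot Du - c(x)u$ with $\tilde g\in L^p$, so the whole problem ultimately reduces to a pure second-order equation of the shape $F_\infty(D^2u,0,0,x)=\tilde g$, to which the Caffarelli/Swi\k{e}ch/Winter theory applies.

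First I would establish the \emph{interior} $W^{2,p}$ bound. Because $F_\infty$ is convex in $M$ and $F_\infty(\cdot,0,0,\cdot)$ is continuous in $x$ (hypothesis H2$(F_\infty)$), Caffarelli's interior $W^{2,p}$ estimate for fully nonlinear equations with continuous (in particular VMO) coefficients applies. The $L^p$-viscosity framework of Crandall-Kocan-Swi\k{e}ch and its extension by Winter then give, on any compactly contained subdomain $\Omega'\Subset\Omega$,
\begin{equation*}
\|u\|_{W^{2,p}(\Omega')}\le C(\|u\|_{L^\infty(\Omega)}+\|g\|_{L^p(\Omega)}),
\end{equation*}
the lower-order Lipschitz terms being handled by first using interior Krylov-Safonov/$C^{1,\alpha}$ bounds to treat $b(x)\cdot Du$ as a member of $L^p$ and bootstrapping. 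The quantitative dependence comes out in $n,p,\lambda,\Lambda,\gamma,\delta$ and on the modulus of continuity of $F_\infty(\cdot,0,0,\cdot)$.

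Next I would handle the \emph{boundary} estimate. Since $\partial\Omega$ is $C^{1,1}$, I would locally flatten the boundary by a $C^{1,1}$ diffeomorphism, which preserves the structural hypotheses SC with altered constants, and then apply the boundary $W^{2,p}$ estimate of Dong-Krylov (or the earlier version of Winter) for convex fully nonlinear operators with continuous leading part and zero Dirichlet data on a flat boundary. The key analytic tool is an $L^p$ version of the Aleksandrov-Bakel'man-Pucci/Calder\'on-Zygmund decomposition adapted up to the boundary, combined with Caffarelli's perturbation: freeze the principal coefficients at a boundary point, compare $u$ with the solution of the frozen-coefficient Dirichlet problem on a half-ball (for which classical $W^{2,p}$ estimates are available by convexity and Evans-Krylov-type smoothness), and iterate on shrinking half-balls. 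The outcome is a boundary bound of the same form as the interior one on a boundary collar $\Omega\cap B_r(x_0)$. A finite covering of $\overline\Omega$ by interior subdomains and such boundary charts, together with a partition of unity, yields the global estimate \eqref{regest}.

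The main obstacle I would anticipate is the Calder\'on-Zygmund-type decay estimate on the distribution function of $|D^2u|$ \emph{up to the boundary}, and in particular its compatibility with the bounded measurable (not VMO) lower-order terms $b,c$. The clean way around this is to first establish a global $C^{1,\alpha}$ estimate, by Krylov-Safonov interior H\"older regularity for $Du$ combined with Lipschitz boundary regularity from the $C^{1,1}$ barriers; then $b(x)\cdot Du+c(x)u\in L^\infty\subset L^p$ and the entire lower-order contribution is a bona fide right-hand side, reducing the theorem to its principal case where the cited Dong-Krylov arguments apply directly.
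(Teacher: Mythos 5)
Your proposal is correct in outline but takes a substantially longer route than the paper, and it slightly mislocates where the difficulty lies when citing Dong--Krylov. The paper treats this theorem as a direct citation of \cite{dongkrylov}: the only point that needs attention is that Dong--Krylov prove $W^{2,p}$ estimates (and unique solvability) for \emph{proper} operators, i.e.\ $F$ nonincreasing in $u$, whereas (SC) allows $F$ to be increasing in $u$. The paper's remark resolves this in one line: by H1, the operator $F_\delta := F - \delta u$ is proper, so one rewrites the equation as $F_\delta[u] = g - \delta u$ and applies Dong--Krylov to $F_\delta$; the modified right-hand side is still in $L^p$ because $\|u\|_{L^\infty}$ appears on the right of \eqref{regest} anyway. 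That is the entire argument.

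Your proposal instead sketches a from-scratch reproduction of the Dong--Krylov/Winter/Swi\k{e}ch machinery (interior Caffarelli-type $W^{2,p}$, boundary flattening and a boundary $W^{2,p}$ estimate on half-balls, a finite covering, plus a preliminary $C^{1,\alpha}$ bootstrap to tame the gradient term). This is a valid architecture and shows you understand the underlying regularity theory, but it duplicates the cited reference rather than using it. Two smaller points worth flagging: (i) your claim that ``Krylov--Safonov interior H\"older regularity for $Du$'' handles the bootstrap is imprecise --- Krylov--Safonov gives $C^\alpha$ for $u$; the interior $C^{1,\alpha}$ estimate needed here comes from Caffarelli's perturbation theory for convex operators with continuous principal coefficients; (ii) the $C^{1,\alpha}$ bootstrap is actually unnecessary for this statement, since Dong--Krylov already accommodate the $Du$-dependence allowed by H1 with bounded measurable coefficients --- the genuine obstruction is only the sign of the $u$-dependence, which the $F-\delta$ trick removes cleanly.
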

\begin{remark}
    The results in \cite{dongkrylov} assume $F$ to be nonincreasing in $u$, i.e. proper, and in that case {\it the unique solvability of \eqref{Main_Dirichlet} is also established}. Then \eqref{regest} follows trivially, by observing that H1(F) implies properness for $F-\delta$ and by considering the equation $F[u]-\delta u = g - \delta u$.
\end{remark}

So all viscosity solutions of equations (but not necessarily of inequations) in this paper will be {\it strong} solutions, more precisely belonging to $W^{2,p}(\Omega)$ and satisfying the equation almost everywhere in $\Omega$. Since $p>N$, this also implies solutions are in $ C^{1,\alpha}(\overline{\Omega})$ for $\alpha=1-n/p$, by Sobolev embeddings.\medskip

We turn to the existence and properties of principal half-eigenvalues, and the
related one-sided  generalized ABP estimates obtained in  \cite{QUAAS2008105} and \cite{armstrong2009resonance}.
\begin{theorem}[\cite{QUAAS2008105}]
\label{ABP}
 Let $\Omega$ be an arbitrary bounded domain in $\mathbb{R}^n$.   Assume $F$ satisfies H0, H1, H2 and C.
    \begin{enumerate}
    \item The numbers $ \lambda_1^+\le\lambda_1^-$ in \eqref{eigvs} are simple and isolated eigenvalues of $F$ in $\Omega$,  associated to a positive
and a negative eigenfunctions
$\varphi_1^\pm\in W^{2,q}_{\text{loc}}(\Omega)\cap C_0(\overline{\Omega}) $ (even $W^{2,q}(\Omega)$ if $\Omega$ is smooth), for   $q<\infty$. These numbers are monotone and continuous with respect to~$\Omega$. Also, $-\delta\le \lambda_1^+\le\lambda_1^-\le C_0/r^2$, if $\Omega$ contains a ball with radius $r\in(0,1)$.
\item Assume there exists a viscosity solution
$u\in C(\overline{\Omega})$ of one of the problems
\begin{equation}\label{simp1}
   F(D^2u, Du, u, x) =  -\lambda_1^+ u\quad\mbox{in }\;\Omega,\qquad
u=0\quad\mbox{on }\;\partial\Omega;
\end{equation}
\begin{equation}\label{simp2}
 F(D^2u, Du, u, x) \le  -\lambda_1^+ u\quad\mbox{in }\;\Omega, \qquad u>0\quad\mbox{in }\;\Omega;
\end{equation}
\begin{equation}\label{simp3}
 F(D^2u, Du, u, x) \ge  -\lambda_1^+ u\quad\mbox{in }\;\Omega, \qquad u(x_0)>0, \qquad u\le0\quad\mbox{on }\;\partial\Omega;\end{equation}
for some $x_0\in \Omega$. Then $u\equiv t\varphi_1^+$, for some $t\in
\mathbb{R}$. If a function $u\in C(\overline{\Omega})$ satisfies
either (\ref{simp1}) or the inverse inequalities in (\ref{simp2}) or (\ref{simp3}),
with $\lambda_1^+$ replaced by $\lambda_1^-$, then $u\equiv t\varphi_1^-$.
        \item If $\lambda_1^-(F,\Omega) > 0$, then for some $C = C(n,\lambda,\Lambda,\gamma,\delta,\Omega)$ and any $u \in C^0(\overline{\Omega}) $, $g \in L^N(\Omega)$,
        $
            F(D^2u,Du,u,x) \leq g
        $
        implies
        \begin{equation}\label{abpminus}
            \sup_{\Omega} u^- \leq C\left(1+\frac{1}{\lambda_1^-}\right)\left(\sup_{\partial\Omega} u^- + ||g^+||_{L^N(\Omega)}\right).
        \end{equation}
        \item If $\lambda_1^+(F,\Omega) > 0$, then for some $C = C(n,\lambda,\Lambda,\gamma,\delta,\Omega)$ and any $u \in C^0(\overline{\Omega}) $, $g \in L^N(\Omega)$,
       $
            F(D^2u,Du,u,x) \geq g
       $
        implies
        \begin{equation}\label{abpplus}
            \sup_{\Omega} u \leq C\left(1+\frac{1}{\lambda_1^+}\right)\left(\sup_{\partial\Omega} u^+ + ||g^-||_{L^N(\Omega)}\right).
        \end{equation}
    \end{enumerate}
\end{theorem}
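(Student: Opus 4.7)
The plan is to follow the Berestycki-Nirenberg-Varadhan paradigm \cite{berestycki1994principal} adapted to fully nonlinear convex operators, combined with the Caffarelli-Swiech regularity theory underlying Theorem \ref{W2P}. For Part 1, the finiteness bounds on $\lambda_1^+$ come from two choices in \eqref{eigvs}: $\psi\equiv 1$ with H1 yields $\lambda_1^+\ge-\delta$, while testing with the first Dirichlet eigenfunction of $\mathcal{M}^-_{\lambda,\Lambda}$ on an inscribed ball $B_r\subset\Omega$ gives $\lambda_1^+\le C_0/r^2$ by domain monotonicity. To construct $\varphi_1^+$, I would pick $\lambda_k\nearrow\lambda_1^+$ so that $F+\lambda_k$ admits the maximum principle (possible by the very definition of $\lambda_1^+$), solve $F[u_k]+\lambda_k u_k=-1$ via Perron's method, normalize $\|u_k\|_\infty=1$, and use the $C^{1,\alpha}$ compactness from Theorem \ref{W2P} to extract a nontrivial nonnegative limit; strict positivity follows from the fully nonlinear strong maximum principle. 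Continuity and monotonicity in $\Omega$ come from a standard comparison argument.

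Simplicity and the characterizations in Parts 1--2 reduce to a sliding argument enabled by convexity. Given a positive $\psi$ satisfying $F[\psi]+\lambda_1^+\psi\le 0$, set $t^*=\sup\{t\ge 0:t\psi\le\varphi_1^+\}$; since C together with H0 implies D$_F(F)$ (subadditivity of $F$), the difference $\varphi_1^+-t^*\psi$ is a nonnegative supersolution of $F[\cdot]+\lambda_1^+(\cdot)\le 0$, and the strong maximum principle plus the Hopf lemma force $\varphi_1^+\equiv t^*\psi$. Isolation follows by applying the same scheme to a hypothetical eigenpair $(\lambda,\psi)$ with $\lambda$ slightly below $\lambda_1^+$; the characterizations \eqref{simp1}--\eqref{simp3} reduce to the sliding argument after appropriate sign reflections.

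For Parts 3--4, the key is to reduce to the linear ABP via a measurable selection of the maximizing $\alpha$ in $F=\sup_\alpha L_\alpha$. For Part 3, assume $\lambda_1^->0$ and $F[u]\le g$. Pick a measurable $\alpha^*(x)$ realizing the supremum at $\varphi_1^-$; then $L_{\alpha^*}[-\varphi_1^-]=-\lambda_1^-(-\varphi_1^-)$, so $-\varphi_1^->0$ is a positive eigenfunction of $L_{\alpha^*}$ and $\lambda_1(L_{\alpha^*})=\lambda_1^-$. Since $F$ is a supremum, $L_{\alpha^*}[u]\le F[u]\le g$, and the classical linear ABP of \cite{berestycki1994principal} applied to $L_{\alpha^*}$ yields \eqref{abpminus}. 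Part 4 reverses the roles: linearize at $u$ so that $L_{\alpha^*}[u]=F[u]\ge g$, and use the subadditivity inequality to obtain $L_{\alpha^*}[\varphi_1^+]\le F[\varphi_1^+]=-\lambda_1^+\varphi_1^+$; this shows $\varphi_1^+$ is a positive supersolution of $L_{\alpha^*}+\lambda_1^+=0$, hence $\lambda_1(L_{\alpha^*})\ge\lambda_1^+$, and the linear ABP again applies, yielding \eqref{abpplus} with the correct dependence $(1+1/\lambda_1^+)$.

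The main obstacle I expect is obtaining the sharp $(1+1/\lambda_1^\pm)$ dependence with constants depending only on $n,\lambda,\Lambda,\gamma,\delta,\Omega$ and not on the eigenvalue itself. This requires the linear ABP constant for $L_{\alpha^*}$ to depend only on the Pucci bounds inherited from H1, uniformly in the measurable selection $\alpha^*$. A secondary obstacle is ensuring that the construction of $\varphi_1^\pm$ furnishes a uniform Hopf-type lower bound $|\varphi_1^\pm|\ge c_0\operatorname{dist}(\cdot,\partial\Omega)$ with $c_0$ depending only on the structural data, so that inhomogeneous boundary data can be absorbed cleanly. Handling these uniformities within the $L^p$-viscosity framework, where the measurable selection $\alpha^*(x)$ demands some care, is the technical heart of the argument.
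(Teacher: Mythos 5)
This statement is a \emph{cited} result — the paper attributes it to \cite{QUAAS2008105}, and Remark~3.1 explicitly credits the sharp form of the constants in \eqref{abpminus}–\eqref{abpplus} to \cite[Theorem 2.3]{armstrong2009resonance}. There is no proof of it in the paper, so your attempt can only be judged on its own merits; it cannot be matched against ``the paper's own proof.''

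On those merits, the sketch of Parts 1--2 follows the Berestycki--Nirenberg--Varadhan template in a reasonable way, but the sign in the sliding argument is off: from subadditivity (equivalently $\mathrm{D}_F(F)$) and $F[\varphi_1^+]=-\lambda_1^+\varphi_1^+$, $F[t^*\psi]\le -\lambda_1^+t^*\psi$, one gets $F[\varphi_1^+-t^*\psi]\ge -\lambda_1^+(\varphi_1^+-t^*\psi)$, so the difference is a nonnegative \emph{subsolution}, not a supersolution; the conclusion via the strong maximum principle then needs a different invocation (one applies Hopf to the linearized/Pucci inequality satisfied by the difference), and some work is needed to justify that the selected $\mathcal{M}^\pm$-inequality has the right sign of the zeroth-order coefficient.

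The more serious gap is in your Part~4. For $F=\sup_\alpha L_\alpha$ and $u$ a viscosity \emph{subsolution} of $F[u]\ge g$, there is in general no measurable selection $\alpha^*(x)$ yielding a single linear operator with $L_{\alpha^*}[u]\ge g$: the selection you describe (maximizer ``at $u$'') requires $D^2u$, $Du$, $u$ to be defined a.e., but the hypotheses only give $u\in C^0(\overline\Omega)$. Moreover, even if $u$ were in $W^{2,p}_{\mathrm{loc}}$, the resulting operator $L_{\alpha^*}$ has merely measurable coefficients, and the eigenvalue comparison $\lambda_1(L_{\alpha^*})\ge\lambda_1^+$ uses $\varphi_1^+$ as a \emph{supersolution} of $L_{\alpha^*}+\lambda_1^+$, so one must verify that the BNV $\lambda_1$ of a measurable-coefficient operator (realized as a linearization of a nonsmooth $u$) is well-defined and the quantitative ABP constant $(1+1/\lambda_1)$ carries over uniformly; this is not the route taken in the cited references. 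Your Part~3 selection \emph{at the eigenfunction} $\varphi_1^-\in W^{2,q}_{\mathrm{loc}}$ is consistent, but even there one should check that the constant in the linear ABP comes out in the form $(1+1/\lambda_1^-)$ with the stated dependence — that explicit form is exactly what \cite{armstrong2009resonance} establishes by a direct nonlinear argument, not by linearization. As you half-anticipate in your ``obstacles'' paragraph, these are not minor technicalities: the route through measurable linearization does not close, and the clean proof in the literature proceeds differently.
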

  \begin{remark}
\label{Unif_ABP}
The explicit form of the constants in \eqref{abpminus} and \eqref{abpplus} is due to \cite[Theorem 2.3]{armstrong2009resonance}.\end{remark}
\begin{corollary}[\cite{QUAAS2008105}]
\label{Comparison_Corollary}
      Assume $F$ satisfies H0, H1, H2 and C, and $\text{D}_F(H)$ holds. Then, if $\lambda_1^+(F,\Omega) >0$, $H$ satisfies the comparison principle in the following sense: whenever $u, v \in C^0(\Omega)$ satisfy
    \begin{align}
    \label{Comparison}
            H(D^2u,Du,u,x) \leq  H(D^2v,Dv,v,x) \text{ in } \Omega, \qquad
            u \geq v \text{ on } \partial \Omega,
    \end{align}
    in the $L^p$--viscosity sense, and at least one of them is in $W^{2,p}_{\text{loc}}(\Omega)$, then $u \geq v$ in $\Omega$.
\end{corollary}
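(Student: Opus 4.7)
The plan is to reduce the comparison statement to the one-sided ABP estimate in Theorem \ref{ABP}.4 applied to the difference $w=v-u$. Concretely, set $w=v-u$. The hypothesis $\text{D}_F(H)$ gives, pointwise in the arguments,
\begin{equation*}
    H(D^2v,Dv,v,x)-H(D^2u,Du,u,x)\le F(D^2w,Dw,w,x),
\end{equation*}
and, combined with $H[u]\le H[v]$, this should yield the viscosity inequality
\begin{equation*}
    F(D^2w,Dw,w,x)\ge 0 \quad\text{in }\Omega.
\end{equation*}
Making this rigorous is where the $W^{2,p}_{\mathrm{loc}}$-regularity of at least one of $u,v$ enters: if, say, $v\in W^{2,p}_{\mathrm{loc}}(\Omega)$, then $v$ satisfies $H[v]\ge H[u]$ almost everywhere, so any smooth function $\varphi$ touching $w$ from above at $x_0$ yields a test function $\varphi+v$ for $u$ from above; applying the $L^p$-viscosity subsolution condition on $u$ (in the sense of Crandall--Kocan--Świ\k{e}ch, as used in \cite{crandall2000lp} and referenced by the hypotheses of the paper) and invoking $\text{D}_F(H)$ on the difference transfers the inequality to $F[w]\ge 0$ at $x_0$. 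The case $u\in W^{2,p}_{\mathrm{loc}}$ is symmetric.

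Once $F[w]\ge 0$ in the $L^p$-viscosity sense is established, the boundary inequality $u\ge v$ on $\partial\Omega$ translates to $w^+=0$ on $\partial\Omega$. Since $\lambda_1^+(F,\Omega)>0$, the hypotheses of Theorem \ref{ABP}.4 are met by $w$ with right-hand side $g\equiv 0$, giving
\begin{equation*}
    \sup_\Omega w\le C\!\left(1+\frac{1}{\lambda_1^+}\right)\!\left(\sup_{\partial\Omega}w^++\|0\|_{L^N(\Omega)}\right)=0,
\end{equation*}
hence $v\le u$ in $\Omega$, which is the claim.

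The main obstacle is the first step: justifying rigorously that $w$ is an $L^p$-viscosity subsolution of $F[w]\ge 0$. The algebraic manipulation with $\text{D}_F(H)$ is straightforward at the level of arguments, but $L^p$-viscosity inequalities do not subtract freely; one genuinely needs the $W^{2,p}_{\mathrm{loc}}$ hypothesis on one of the functions so that its actual second derivatives can be added to test functions touching the other. The remaining steps, namely the translation of the boundary data and the direct application of Theorem \ref{ABP}.4, are essentially automatic once the subtraction is in place.
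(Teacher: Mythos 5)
The paper itself states this corollary without proof, attributing it to \cite{QUAAS2008105}, so there is no in-text argument to compare against; I therefore assess your proof on its own. Your approach is the natural and correct one: convert the comparison into a one-sided ABP estimate for $w=v-u$ via $\text{D}_F(H)$, using the $W^{2,p}_{\text{loc}}$ regularity of one of $u,v$ to make the subtraction legitimate in the $L^p$-viscosity framework, and then conclude from Theorem~\ref{ABP}~4 with $g\equiv 0$ and $\sup_{\partial\Omega}w^+=0$.

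There is, however, a direction/sign slip in the paragraph that makes the viscosity subtraction rigorous. With $w=v-u$, showing $F[w]\ge 0$ in the $L^p$-viscosity sense means checking that $w$ is a \emph{supersolution}, so the test functions $\varphi\in W^{2,p}_{\text{loc}}$ must touch $w$ \emph{from below} at $x_0$, not from above. From $\varphi\le w=v-u$ near $x_0$ with equality at $x_0$ one gets $u\le v-\varphi$ with equality at $x_0$; thus the correct test function for $u$ is $\psi=v-\varphi$ (not $\varphi+v$), touching $u$ from above, to which the $L^p$-viscosity subsolution inequality $H[u]\le H[v]$ applies. Writing $\text{D}_F(H)$ in the form
\begin{equation*}
H(D^2v,Dv,v,x)-H(D^2\psi,D\psi,u(x),x)\le F\bigl(D^2\varphi,D\varphi,w(x),x\bigr)
\end{equation*}
and combining it with the subsolution inequality for $u$ at $x_0$ then yields the required essential-limit-superior condition $F(D^2\varphi,D\varphi,w,\cdot)\ge 0$ near $x_0$, i.e.\ $F[w]\ge 0$ in the $L^p$-viscosity sense. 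With this correction, your reduction to Theorem~\ref{ABP}~4 is exactly right and the conclusion $\sup_\Omega w\le 0$ follows immediately.
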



Next, we quote an existence result based on positivity of half-eigenvalues.
\begin{theorem}[\cite{QUAAS2008105}]
\label{Positive_Existence}
    Assume $F$ satisfies H0, H1, H2 and C, and  $\lambda_1^+(F,\Omega) > 0$. In this case, given any $f \in L^p(\Omega)$, there exists a unique solution $u \in W^{2,p}(\Omega)$ of \eqref{Main_Dirichlet}.
    \par Alternatively, suppose $\lambda_1^-(F)>0$. Then, for any $g \in L^p(\Omega)$ such that $g\geq 0$ a.e. in $\Omega$, there exists a nonpositive solution $u \in W^{2,p}(\Omega)$ of \eqref{Main_Dirichlet}.
\end{theorem}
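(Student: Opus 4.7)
Hypotheses H0 and C on $F$ imply $\mathrm{D}_F(F)$ (see \cite{armstrong2009principal}), so Corollary~\ref{Comparison_Corollary} with $H=F$ gives the comparison principle under $\lambda_1^+(F,\Omega)>0$. Applying it in both directions to two $W^{2,p}$ solutions of \eqref{Main_Dirichlet} with identical boundary data yields their equality.

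\textbf{Existence in Part I.} The hypothesis $\lambda_1^+>0$ forces $\lambda_1^-\ge \lambda_1^+>0$ (by Theorem~\ref{ABP}.1), so parts~3 and~4 of Theorem~\ref{ABP}, applied respectively to $F[u]\le g$ and $F[u]\ge g$ for any solution $u$ with zero boundary data, combine with $\|\cdot\|_{L^N(\Omega)}\le C(\Omega)\|\cdot\|_{L^p(\Omega)}$ to give the uniform a priori estimate
$$
\|u\|_{L^\infty(\Omega)}\le C\Big(1+\tfrac{1}{\lambda_1^+}\Big)\Big(1+\tfrac{1}{\lambda_1^-}\Big)\|g\|_{L^p(\Omega)}.
$$
To produce an actual solution I would fix $K>\delta$ so that $F[\,\cdot\,]-K(\cdot)$ is proper, define the compact operator $S\colon L^p(\Omega)\to C^{1,\alpha}(\overline\Omega)$ by $S(h)=v$, where $v$ is the unique $W^{2,p}$ solution of $F[v]-Kv=h$ furnished by \cite{dongkrylov}, and consider the compact homotopy $\Psi_s(u)=S(g-sKu)$ on $C^{1,\alpha}(\overline\Omega)$ for $s\in[0,1]$. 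Fixed points of $\Psi_s$ correspond to solutions of $F[u]-(1-s)Ku=g$, and the shifted operator $F-(1-s)KI$ has half-eigenvalues $\lambda_1^\pm(F)+(1-s)K>0$, so the bound above applies uniformly in $s$. Since $\Psi_0\equiv S(g)$ is constant, the Leray--Schauder degree of $I-\Psi_0$ on a sufficiently large ball is $1$, and homotopy invariance produces a fixed point at $s=1$, i.e.\ a solution of \eqref{Main_Dirichlet}.

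\textbf{Part II via monotone iteration, and main obstacle.} When only $\lambda_1^->0$ is assumed no positive supersolution is available, so for the second claim I would iterate from above: set $u_0\equiv 0$, and given $u_n$ let $u_{n+1}\in W^{2,p}(\Omega)$ be the unique solution of the proper problem $F[u_{n+1}]-\mu u_{n+1}=g-\mu u_n$ for a fixed $\mu\ge\delta$, which exists by \cite{dongkrylov}. Since $F[0]=0\le g$, comparison for the proper operator $F-\mu I$ gives $u_1\le u_0=0$; since $-\mu u_n$ is nondecreasing in $-u_n$, induction propagates $u_{n+1}\le u_n\le 0$. Each $u_n$ satisfies $F[u_n]\le g$, so $\lambda_1^->0$ and Theorem~\ref{ABP}.3 yield $\|u_n\|_{L^\infty(\Omega)}\le C(1+1/\lambda_1^-)\|g\|_{L^p(\Omega)}$, which Theorem~\ref{W2P} upgrades to a uniform $W^{2,p}(\Omega)$ bound. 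Passing to the pointwise monotone and $C^{1,\alpha}$ limit along a subsequence and invoking $L^p$-viscosity stability produces a nonpositive strong solution of $F[u]=g$. The step I anticipate as most delicate is the continuation in Part~I: one needs the $L^p\to C^{1,\alpha}$ compactness of $S$ (via Dong--Krylov plus Sobolev embedding), invariance of the Leray--Schauder degree under the homotopy (for which the uniform a priori bound is essential), and in Part~II the identification of the pointwise monotone limit with the $L^p$-viscosity limit, each of which, while routine for smooth operators, requires the full fully nonlinear $L^p$-viscosity machinery of \cite{crandall2000lp,dongkrylov} under (SC).
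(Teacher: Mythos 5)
The paper does not prove this theorem itself; it is imported verbatim from \cite{QUAAS2008105}, with the $W^{2,p}(\Omega)$ regularity upgrade supplied by Theorem~\ref{W2P} as the subsequent remark explains. Your blind argument is a correct, self-contained reproof in the spirit both of the original source and of the tools deployed elsewhere in this paper: uniqueness via Corollary~\ref{Comparison_Corollary} with $H=F$; existence for $\lambda_1^+>0$ by a Leray--Schauder continuation, exploiting that $\lambda_1^\pm\bigl(F-(1-s)K\,I\bigr)=\lambda_1^\pm(F)+(1-s)K\ge\lambda_1^\pm(F)>0$ so that Theorem~\ref{ABP}~3--4 give an $s$-uniform $L^\infty$ bound, upgraded to $W^{2,p}$ by Theorem~\ref{W2P}; and, for the second part, monotone iteration downward from the supersolution $u_0\equiv 0$ using the proper operator $F-\mu I$, with the crucial observation that $F[u_n]=g+\mu(u_n-u_{n-1})\le g$ at every step so that Theorem~\ref{ABP}.3 under $\lambda_1^->0$ furnishes the otherwise missing lower barrier. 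This iteration-plus-compactness scheme is precisely what the paper itself spells out after Lemma~\ref{subsol_exist}, and the degree/homotopy machinery mirrors Lemma~\ref{multiplicity}.

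Two minor imprecisions, neither fatal. First, for compactness of the solution map $S$ one should place the homotopy in $C^1(\overline\Omega)$ (or $C^{1,\alpha'}$ with $\alpha'<1-n/p$), since $W^{2,p}\hookrightarrow C^{1,1-n/p}$ is not compact at the endpoint exponent. Second, combining Theorem~\ref{ABP}~3 and 4 for a solution with zero boundary data gives the maximum $\max\bigl\{(1+1/\lambda_1^+)\|g^-\|_{L^N},\,(1+1/\lambda_1^-)\|g^+\|_{L^N}\bigr\}$ rather than the product; the product form you wrote is still a valid upper bound because both factors are at least $1$, but it is worth noting you are not invoking the sharper product estimate \eqref{generest}, which is a substantially harder statement proved later via Proposition~\ref{estnegeig}.
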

\begin{remark}
    The statements in \cite{QUAAS2008105} give solutions merely belonging to $W^{2,p}_{\text{loc}}(\Omega)\cap C^0(\overline{\Omega})$. That solutions are actually in  $W^{2,p}(\Omega)$ follows for instance from Theorem \ref{W2P}.
\end{remark}

\begin{remark}\label{properex} A particular case of operators for which the first half-eigenvalues are positive are proper operators, and a fortiori, operators independent of $u$. Given $F(M,p,u,x)$ as above, we denote $F_0(M,p,x)=F(M,p,0,x)$.
We recall (see \cite{caffarelli1996viscosity}) that the problem
\begin{equation}\label{puccieq}
\mathcal{L}^\pm_0(D^2u, Du) =\mathcal{M}^\pm_{\lambda,\Lambda}(D^2u) \pm\gamma|Du| = g \in L^n(\Omega)
\end{equation}
has a unique solution such that $u=0$ on $\partial\Omega$ and any sub(resp. super)-solution $u\in C(\overline{\Omega})$ of \eqref{puccieq} satisfies for some $C_0=C_0(n,\lambda,\Lambda, \gamma, \mathrm{diam}(\Omega))$
\begin{equation}\label{pucciest}
\sup_{\Omega} u \leq \sup_{\partial\Omega} u^+ + C_0 ||g^-||_{L^N(\Omega)}\qquad \left(\mbox{resp. }\; \sup_{\Omega} u^- \leq \sup_{\partial\Omega} u^- + C_0 ||g^+||_{L^N(\Omega)}\right).
\end{equation}
The same is valid for $F_0$ instead of $\mathcal{L}^\pm_0$, if $F$ satisfies (H1). Similarly, under H1 the operator  $F_\delta(M,p,u,x) = F(M,p,u,x)-\delta u $ is proper and the same existence statements and estimates are valid for (sub-, super-)solutions of $F_\delta[u]=h$.  \end{remark}

 We will use the following quantitative strong maximum principle, or  global weak Harnack inequality, from \cite{sirakovimrn}.
\begin{theorem}[\cite{sirakovimrn}]
\label{SMP}
    Suppose $g\in L^p(\Omega)$ for some $p>n$, and $u$ is a (viscosity) solution of
    \begin{equation}\label{smpeq}
        \mathcal{M}_{\lambda,\Lambda}^-(D^2u) - \gamma |Du| - \delta |u| \leq g(x), \qquad \mbox{and} \qquad u\geq 0\;\mbox{ in }\;\Omega.
    \end{equation}
    Then for some $p_0, c_0>0$ depending on $n,\lambda,\Lambda, \gamma, \delta$ and $\Omega$ (recall $d=d(x)=\mathrm{dist}(x,\partial\Omega)$)
    \begin{equation}\label{smpres}
        \inf_\Omega \frac{u}{d}\ge c_0 \Big\|\frac{u}{d}\Big\|_{p_0} - c_0^{-1} \|g^+\|_p,\quad\mbox{and}\quad \inf_\Omega \frac{u}{d}\ge c_0\|g^-\|_{p_0} - c_0^{-1} \|g^+\|_p
    \end{equation}
\end{theorem}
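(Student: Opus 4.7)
The plan is to combine the interior weak Harnack inequality for Pucci extremal operators (Krylov--Safonov, Caffarelli--Cabr\'e) with a quantitative Hopf-type boundary estimate adapted to the distance function $d$. The weighting of $u$ by $d$ in \eqref{smpres} is natural: $u$ can vanish on $\partial\Omega$, yet one wants a lower bound that remains meaningful in the boundary layer, where $u$ and its bound should both scale linearly in $d$.

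I would first treat the interior. Since $u\ge 0$, absorbing the zero-order term gives $\mathcal{M}^-(D^2 u)-\gamma|Du|\le g+\delta u$, and the classical weak Harnack inequality with $L^p$ source yields, on each ball $B_{2r}(x_0)\subset\Omega$,
\[
\Big(\frac{1}{|B_r|}\int_{B_r(x_0)} u^{p_0}\Big)^{1/p_0}\le C\Big(\inf_{B_r(x_0)} u+r^{2-n/p}\|g^+\|_{L^p(B_{2r})}\Big),
\]
for some small $p_0>0$. Covering the interior set $\{d\ge d_0\}$ by a Harnack chain of such balls (using that $\Omega$ is a fixed $C^{1,1}$ domain) would upgrade this to $\|u\|_{L^{p_0}(\{d\ge d_0\})}\le C(\inf_{\{d\ge 2d_0\}} u+\|g^+\|_p)$, and since $d$ is bounded away from $0$ there, the same bound holds with $u$ replaced by $u/d$.

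The boundary layer $\{d<d_0\}$ would then be handled by a quantitative Hopf lemma. Using $\partial\Omega\in C^{1,1}$, I would build a barrier $\psi\in W^{2,p}(\Omega)$ with $c_1 d\le \psi\le C_1 d$ and $\mathcal{M}^-(D^2\psi)-\gamma|D\psi|-\delta\psi\ge c_2>0$ in the layer; a $W^{2,p}$ correction $w^+$ from Remark~\ref{properex} with source $g^+$ absorbs the right-hand side, satisfying $\|w^+/d\|_\infty\le C\|g^+\|_p$ by the Lipschitz-at-the-boundary estimate. Comparing $u$ with $c_3\psi\cdot(\inf_{\{d\ge d_0\}} u) - w^+$ via Corollary~\ref{Comparison_Corollary} then gives $u/d\ge c_4\inf_{\{d\ge d_0\}} u - C\|g^+\|_p$ in the layer; combined with the interior bound, this yields the first inequality of \eqref{smpres}.

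For the second inequality, the lower bound by $\|g^-\|_{p_0}$ would come from comparison with an auxiliary proper Pucci problem. Let $v\ge 0$ solve $\mathcal{L}^-_0(D^2 v,Dv)-\delta v=-g^-$ in $\Omega$, $v=0$ on $\partial\Omega$, which is well-posed by Remark~\ref{properex}. Concavity of $\mathcal{M}^-$ gives $\mathcal{M}^-(D^2(u-v))-\gamma|D(u-v)|-\delta(u-v)\le g^+$, and the ABP bound \eqref{abpminus} applied to $u-v$ together with the Hopf step already established yields $u-v\ge -C\|g^+\|_p\cdot d$. Hence $u/d\ge v/d-C\|g^+\|_p$, and applying the first (already proved) inequality to $v$ with source $g^-$ gives $v/d\ge c_0\|g^-\|_{p_0}$. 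The main obstacle I expect is making the Hopf step quantitative in $L^p$ rather than $L^\infty$: the standard Hopf lemma controls $u/d$ from below in terms of $\|g\|_\infty$, whereas here one must pay only $\|g^+\|_p$, which requires the global $W^{2,p}$-up-to-the-boundary regularity of Theorem~\ref{W2P} applied to the Pucci corrector and a careful splitting of $g^+$ relative to the boundary layer.
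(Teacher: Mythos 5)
The paper does not prove Theorem~\ref{SMP}; it is quoted verbatim from \cite{sirakovimrn}, with Remark~3.5 noting that the published version has $\delta=0$ and the general case follows by the same argument. So I am evaluating your sketch on its own merits rather than against an in-paper proof.

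Your plan for the first inequality of \eqref{smpres} — interior Krylov--Safonov weak Harnack, a Harnack chain on $\{d\ge d_0\}$, and a quantitative Hopf lemma with a $C^{1,1}$ barrier $\psi\sim d$ plus a $W^{2,p}$ corrector for $g^+$ — is the standard and correct route, and is essentially what \cite{sirakovimrn} does. That part is fine, though you should be aware the boundary step ``$u-v\ge -C\|g^+\|_p\,d$'' from ABP alone is not automatic; one needs the boundary-Lipschitz version of ABP (which does hold for $C^{1,1}$ domains and $p>n$, but deserves a sentence).

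The genuine gap is in the second inequality. You let $v\ge0$ solve $\mathcal{L}^-_0[v]-\delta v=-g^-$, reduce to $u/d\ge v/d-C\|g^+\|_p$, and then claim that ``applying the first (already proved) inequality to $v$ with source $g^-$ gives $v/d\ge c_0\|g^-\|_{p_0}$.'' This does not follow. For $v$, the source is $-g^-\le 0$, so its positive part vanishes and the first inequality yields only
\[
\inf_\Omega \frac{v}{d}\ \ge\ c_0\Big\|\frac{v}{d}\Big\|_{p_0},
\]
a lower bound in terms of the $L^{p_0}$-norm of \emph{the solution itself}, not of the datum $g^-$. To pass from $\|v/d\|_{p_0}$ to $\|g^-\|_{p_0}$ you would need a lower bound on the solution of a Pucci problem in terms of the $L^{p_0}$-norm of its (nonnegative) source — but that is precisely the content of the second inequality applied to $v$. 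The reduction is circular. The missing ingredient is the ``lower ABP'' / growth-lemma estimate from Krylov--Safonov theory: if $w\ge 0$ and $\mathcal{L}^-[w]\le -f$ with $f\ge 0$ on a ball, then $\inf w\gtrsim \|f\|_{L^{p_0}}$ for some small $p_0$ (Caffarelli--Cabr\'e $L^\varepsilon$ lemma applied to supersolutions), which must then be globalized via the same chain-and-boundary-barrier mechanism you use for the first inequality. That is a separate argument, not a corollary of the first estimate, and it is where the $\|g^-\|_{p_0}$ truly comes from.
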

\begin{remark}\label{remhopf} Theorem \ref{SMP} quantifies the usual Hopf-Oleinik strong maximum principle (proved first for viscosity solutions in \cite{bardi1999strong}). We recall the latter says that if $g=0$ in \eqref{smpeq} then  $u \equiv 0$ or $u>0$ in $\Omega$ and  we have $\lim \inf_{t\searrow 0} t^{-1}(u(x+t\nu) - u(x)) > 0$, $x \in \partial \Omega$, where $\nu$ is the unit interior normal to $\partial \Omega$ at $x$. In particular, if $F$ satisfies H1 and $F[u_1]\le F[u_2]$, $u_1\ge u_2$ in $\Omega$, and one of $u_1$, $u_2$ is in  $W^{2,p}_{\text{loc}}(\Omega)$, then $u=u_1-u_2$ satisfies the conclusion of the Hopf-Oleinik strong maximum principle. \end{remark}

\begin{remark}  Theorem \ref{SMP} is stated in \cite{sirakovimrn} for $\delta = 0$ but the same proof trivially extends to the case $\delta>0$. \end{remark}
\begin{remark}\label{unifdist} The eigenfunction $\phi$ in \eqref{Main_Decomp} satisfies $(F_\infty+\lambda_1^+)[\phi]=0$ so by the bound on $\lambda_1^+$ in Theorem \ref{ABP} 1., the normalization $\|\phi\|_\infty=1$ and the $C^1$ bound we have $\|\phi\|_{C^1(\Omega)}\le C_0$. This implies, on one hand, that $\phi\le C_0d$ in $\Omega$, and on the other hand, that $\phi\ge 1/2$ in a ball in $\Omega$ with radius $1/(2C_0)$ which together with \eqref{smpres} (with $g=0$) implies that $\phi\ge c_0d$ in $\Omega$ for some $c_0>0$. The same applies to any bounded solution $w$ of a Dirichlet problem with $C^1$ estimates and operator satisfying (H1), yielding $0<c_0d\le w\le C_0d$ in $\Omega$. \end{remark}

We next quote a quantitative estimate on the monotonicity of $\lambda_1^+$ with respect to $\Omega$, obtained  in \cite[section 9]{berestycki1994principal} (see the proof of Theorem 2.4 there; the adjustments needed for HJB operators are listed in \cite[Proposition 6.1]{siam2010}), which  plays a pivotal role in the proof of Theorem \ref{Main_Result}.
\begin{theorem}[\cite{berestycki1994principal},\cite{siam2010}]
\label{Monotone_Eigenvalue}
    Assume $F$ satisfies H0, H1, H2 and C. Let $\Gamma$ be a closed subset of $\Omega$ satisfying $|\Gamma| > a$. Then, there exists a constant $\beta_1 = \beta_1(N,\lambda,\Lambda,\gamma,\delta,\Omega,a)>0$ such that we have $\lambda_1^+(F,\Omega\setminus\Gamma) > \lambda_1^+(F,\Omega) + \beta_1$.
\end{theorem}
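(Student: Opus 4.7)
My plan is to establish the uniform gap $\beta_1$ by first proving the qualitative strict monotonicity $\lambda_1^+(F,\Omega\setminus\Gamma)>\lambda_1^+(F,\Omega)$ for any closed $\Gamma\subset\Omega$ with $|\Gamma|>0$, and then upgrading it to a gap depending only on the lower bound $a$ via a compactness-contradiction. This two-step structure mirrors the linear case of \cite{berestycki1994principal}, with the convexity hypothesis C($F$) and the Lipschitz structure H1($F$) replacing linearity in the comparison step.

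For the qualitative step, set $\lambda_1:=\lambda_1^+(F,\Omega)$ and $\mu:=\lambda_1^+(F,\Omega\setminus\Gamma)$; both are well defined by Theorem~\ref{ABP}~1., since $\Omega\setminus\Gamma$ is a bounded open set. Assume for contradiction that $\mu=\lambda_1$. Let $\varphi=\varphi_1^+(F,\Omega)>0$ be the principal eigenfunction with $\|\varphi\|_\infty=1$, satisfying the Hopf bound $\varphi\ge c_0\, d(\cdot,\partial\Omega)$ from Theorem~\ref{SMP}. Take a positive supersolution $\psi$ of $F[\psi]+\mu\psi\le 0$ on $\Omega\setminus\Gamma$, for instance the principal eigenfunction on $\Omega\setminus\Gamma$ itself, or one associated to an eigenvalue slightly below $\mu$. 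Slide $t\psi$ under $\varphi$ by setting $t^*=\sup\{t>0:t\psi\le\varphi \text{ in }\Omega\setminus\Gamma\}$, which is finite and positive because $\varphi$ is bounded while $\psi$ is locally bounded below. The nonnegative difference $w=\varphi-t^*\psi$ touches zero at an interior point $x_0\in\Omega\setminus\Gamma$, because touching on $\partial\Gamma\cap\Omega$ is ruled out by $\varphi>0$ there while the boundary trace of $t^*\psi$ vanishes. Using H1($F$) to linearize the difference of equations and $\mu=\lambda_1$ to cancel zero-order terms, $w$ satisfies $\mathcal{L}^-(D^2w,Dw,w)\le 0$ near $x_0$, and the Hopf--Oleinik strong maximum principle (Remark~\ref{remhopf}) forces $w\equiv 0$ on the connected component of $\Omega\setminus\Gamma$ containing $x_0$, contradicting the different boundary behaviors of $\varphi$ and $t^*\psi$ on $\partial\Gamma\cap\Omega$.

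The upgrade to the quantitative statement proceeds by contradiction as well: if no uniform $\beta_1(a)$ worked, there would be a sequence of closed $\Gamma_n\subset\Omega$ with $|\Gamma_n|\ge a$ and $\lambda_1^+(F,\Omega\setminus\Gamma_n)-\lambda_1^+(F,\Omega)\to 0$. Since $\chi_{\Gamma_n}$ is uniformly bounded in $L^\infty(\Omega)$, I extract a weak-$*$ limit and, replacing it by the support of the limit measure so as to stay within closed sets, produce a closed $\Gamma_\infty\subset\overline{\Omega}$ with $|\Gamma_\infty|\ge a$ and $\chi_{\Gamma_n}\to\chi_{\Gamma_\infty}$ in a sense compatible with the variational characterization~\eqref{eigvs}. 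Continuity of $\lambda_1^+$ under such limits --- provided by Theorem~\ref{ABP}~1.\ plus the uniform $W^{2,p}_{\mathrm{loc}}$ bounds for normalized eigenfunctions from Theorem~\ref{W2P} --- then gives $\lambda_1^+(F,\Omega\setminus\Gamma_\infty)=\lambda_1^+(F,\Omega)$, which the qualitative step forbids.

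The main obstacle will be this last continuity step, since $\Omega\setminus\Gamma_\infty$ need not be regular: $\Gamma_\infty$ can develop cusps, filamentary pieces, or thick boundary, so Theorem~\ref{ABP}~1.\ cannot be invoked as a black box. The easy inequality $\limsup_n\lambda_1^+(F,\Omega\setminus\Gamma_n)\le\lambda_1^+(F,\Omega\setminus\Gamma_\infty)$ is direct, since any positive supersolution witness for $\lambda_1^+(F,\Omega\setminus\Gamma_\infty)-\eta$ also works on $\Omega\setminus\Gamma_n$ for large $n$. The reverse inequality will be the delicate one: it requires extracting a positive limit supersolution on $\Omega\setminus\Gamma_\infty$ from the eigenfunctions on $\Omega\setminus\Gamma_n$, which demands normalization via Theorem~\ref{SMP} on compacta eventually sitting inside $\Omega\setminus\Gamma_\infty$, combined with local $W^{2,p}$ compactness from Theorem~\ref{W2P} to pass to an $L^p$--viscosity-sense limit.
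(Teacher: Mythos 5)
The paper does not prove Theorem~\ref{Monotone_Eigenvalue}; it is quoted from \cite[Section~9, proof of Theorem~2.4]{berestycki1994principal} with the HJB adaptations from \cite[Proposition~6.1]{siam2010}. Those references establish the uniform gap \emph{directly}, by a quantitative growth-lemma/ABP-type argument: roughly, the normalized eigenfunction $\psi'$ of $F$ on $\Omega\setminus\Gamma$, extended by zero, vanishes on the set $\Gamma$ of measure $\ge a$, and measure-to-pointwise estimates of Krylov--Safonov type then force $\psi'$ to satisfy a strictly stronger supersolution inequality, which yields $\lambda_1^+(F,\Omega\setminus\Gamma)\ge\lambda_1^+(F,\Omega)+\beta_1$ with $\beta_1$ depending only on $a$ and the structural constants. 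This is not a compactness argument, and the dependence on $\Gamma$ only through a lower bound on $|\Gamma|$ is built into the estimate from the start.

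Your second step (compactness-contradiction) has a genuine gap. The weak-$*$ limit in $L^\infty$ of the indicator functions $\chi_{\Gamma_n}$ is in general a function $f$ with $0\le f\le 1$, \emph{not} an indicator function, and there is no natural closed set $\Gamma_\infty$ to extract from it. A concrete obstruction: take $\Gamma_n$ to be a union of $n$ small balls of equal total measure $a$ spread uniformly in $\Omega$; then $\chi_{\Gamma_n}\rightharpoonup^* a/|\Omega|$, the support of the limit is all of $\overline{\Omega}$, and no sensible $\Gamma_\infty$ with $|\Gamma_\infty|\ge a$ and $\Omega\setminus\Gamma_\infty$ open exists. Moreover $\lambda_1^+$ is emphatically \emph{not} continuous along such sequences of perforated domains --- by homogenization it converges to the eigenvalue of a modified operator, not to $\lambda_1^+(F,\Omega)$ --- so the ``continuity step'' you flag as the main obstacle is not merely delicate, it fails. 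This is why the cited proofs avoid compactness and argue quantitatively.

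The qualitative step also has a sign problem. From $F[\varphi]=-\lambda_1\varphi$ and $F[\psi]\le-\mu\psi$ with $\mu=\lambda_1$ and H1, you obtain
\begin{equation*}
\mathcal{L}^+(D^2w,Dw,w)\ \ge\ F[\varphi]-F[t^*\psi]\ \ge\ -\lambda_1 w
\end{equation*}
for $w=\varphi-t^*\psi$, but this is a \emph{lower} bound on $\mathcal{L}^+[w]$, not an upper bound on $\mathcal{L}^-[w]$; it is not of the form required by Theorem~\ref{SMP}/Remark~\ref{remhopf}, so the Hopf--Oleinik principle cannot be invoked on $w$ as written. (The standard route for the qualitative statement in this setting is the simplicity/comparison characterization of Theorem~\ref{ABP}~2., applied to the zero extension of $\psi$, rather than a sliding of $t\psi$ under $\varphi$.) If you wish to reconstruct the argument, I recommend following the measure-theoretic growth estimate in \cite[Section~9]{berestycki1994principal} and \cite[Proposition~6.1]{siam2010}, which gives both the strict and quantitative parts in one pass and is the approach the paper is relying on.
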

\begin{remark}
    It should be stressed that the dependence of $\beta_1$ on the subset $\Gamma$ in the above theorem occurs only through a lower bound on the measure of $\Gamma$.
\end{remark}
Finally, we state well-known general results on consistency of viscosity (sub and super)solutions (see \cite{caffarelli1996viscosity}), which will also be frequently used in our arguments.
\begin{theorem}
\label{Consistency}
    Assume $F$ satisfies H1. If $f_n \rightarrow f$ in $L^p(\Omega)$ and $u_n \rightarrow u$ in $C^0(\overline{\Omega})$ satisfy $F(D^2u_n,Du_n,u_n,x) \leq(\geq) f_n$, then u satisfies $F(D^2u,Du,u,x) \leq(\geq) f$.
\end{theorem}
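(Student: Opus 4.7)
The plan is to prove the subsolution case; the supersolution case follows by symmetry, applying the subsolution statement to $-u_n$ together with the operator $\widetilde F(M,p,u,x):=-F(-M,-p,-u,x)$, which still satisfies H1. By the definition of $L^p$-viscosity subsolution in \cite{caffarelli1996viscosity}, it suffices to show that for every $\phi\in W^{2,p}_{\mathrm{loc}}(\Omega)$ and every $x_0\in\Omega$ at which $u-\phi$ attains a local maximum,
$$\mathop{\mathrm{ess\,liminf}}\limits_{x\to x_0}\bigl[F(D^2\phi(x),D\phi(x),u(x_0),x)-f(x)\bigr]\le 0.$$
A standard perturbation $\phi\mapsto\phi+\theta|x-x_0|^2$, with $\theta$ small (its effect on $F$ is $O(\theta)$ by H1 and can be absorbed in $\varepsilon$ below), makes the local maximum strict on some closed ball $\overline{B_r(x_0)}\Subset\Omega$.

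Arguing by contradiction, suppose there exist $\varepsilon,r>0$ such that $F(D^2\phi,D\phi,u(x_0),x)\ge f(x)+\varepsilon$ a.e.\ in $B_r(x_0)$. The uniform convergence $u_n\to u$ and the strict interior maximum at $x_0$ produce interior local maximizers $x_n\in B_r(x_0)$ of $w_n:=u_n-\phi$ with $x_n\to x_0$ and $u_n(x_n)\to u(x_0)$. Applying the $L^p$-viscosity subsolution property of $u_n$ at $x_n$ with test function $\phi$, together with the $\delta$-Lipschitz dependence on $u$ from H1 and $u_n(x_n)\to u(x_0)$, one derives
$$\mathop{\mathrm{ess\,limsup}}\limits_{x\to x_n}\bigl(f_n-f\bigr)(x)\ge \varepsilon-\delta\,|u_n(x_n)-u(x_0)|\ge \varepsilon/2\quad\text{for all large }n.$$

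The main obstacle is converting this essentially-pointwise statement along the sequence $x_n$ into a contradiction with the $L^p$ convergence $f_n\to f$, which is not automatic since $L^p$ convergence does not control local essential suprema. This is the content of the $L^p$-viscosity stability machinery from \cite{caffarelli1996viscosity}: I would replace $\phi$ by its Jensen sup-convolution $\phi^\eta$ (still in $W^{2,p}_{\mathrm{loc}}$), rewrite the subsolution inequality for $u_n$ as a Pucci inequality $\mathcal{L}^-(D^2 w_n^\eta,Dw_n^\eta,w_n^\eta)\le f_n-f-\varepsilon+o_n(1)$ a.e.\ in $B_r$ by the lower bound in H1, and then apply the Pucci ABP from \eqref{pucciest} (absorbing the zeroth-order $\delta|w_n^\eta|$ term via the standard substitution $F_\delta=F-\delta u$ as in Remark \ref{properex}). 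The ABP constant couples $\|f_n-f\|_{L^p}\to 0$ on the right with the strict-maximum gap $\kappa_r:=(u-\phi)(x_0)-\sup_{\partial B_r}(u-\phi)>0$ on the left, and a careful three-way balancing of $\eta,r,n$ produces the contradiction, establishing the desired essliminf inequality and completing the proof.
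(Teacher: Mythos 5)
The paper does not prove Theorem \ref{Consistency}: it is quoted verbatim as a known stability result for $L^p$-viscosity solutions from \cite{caffarelli1996viscosity}, so there is no internal proof to compare against. Your outline captures the correct architecture of the CCKS argument (reduction to the subsolution case, contradiction at a strict interior maximum, the ess-limsup inequality for $f_n-f$ along the maximizers $x_n$, and finally an ABP-type step to convert the pointwise strict inequality into a contradiction with $\|f_n-f\|_{L^p}\to 0$), and the algebra in your displayed inequality is correct. However there are two genuine gaps.

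First, the sup-convolution step is misdirected. The test function $\phi$ is already in $W^{2,p}_{\mathrm{loc}}$, so sup-convolving it accomplishes nothing. The Jensen/Caffarelli machinery requires sup-convolving the \emph{viscosity subsolution} $u_n$ (or equivalently $w_n = u_n-\phi$), which upgrades it to a semiconvex, twice-differentiable-a.e.\ function satisfying a perturbed PDE pointwise; only then is the pointwise Pucci inequality you want to feed into \eqref{pucciest} meaningful. An alternative shortcut is to invoke directly the ABP comparison theorem from \cite{caffarelli1996viscosity} between an $L^p$-viscosity subsolution and a strong $W^{2,p}$ supersolution, which is essentially the technical lemma your ``balancing'' sentence gestures at.

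Second, and more importantly, the final sentence is the proof. The content of the stability theorem is precisely that $L^p$-convergence of $f_n$ and $C^0$-convergence of $u_n$ combine to give an ABP inequality of the form $\sup_{B_r}(u_n-\phi)\le\sup_{\partial B_r}(u_n-\phi)+C\,\|(f_n-f-\varepsilon)^+\|_{L^p(B_r)}$ which, for large $n$, contradicts the strict-maximum gap $\kappa_r>0$. You have identified the ingredients but not assembled them: the attempt reduces the theorem to exactly the lemma that needs proof and then asserts that a careful balancing ``produces the contradiction.'' To count as a proof one must actually display the ABP inequality in the mixed viscosity/strong setting, handle the zeroth-order perturbation $\delta|u(x_0)-\phi(x)|$ coming from freezing the $u$-argument, and verify the maximum gap $\kappa_r$ survives the $C^0$ convergence $u_n\to u$.
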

\par An application of Theorems \ref{Consistency} and \ref{W2P} gives us a simple continuity result which will often suffice in place of the general consistency statement. We state this as a corollary.
\begin{corollary}
\label{Consistency_Cor}
    Assume (SC). If $\{u_n\}$ is a bounded (in $L^{\infty}$) sequence satisfying $F(D^2u_n,Du_n,u_n,x) = f_n$, with $f_n \rightarrow f$ in $L^p(\Omega)$, then there exists a subsequence of $\{u_n\}$ which converges in $C^1(\overline{\Omega})$ to a solution of $F(D^2u,Du,u,x) = f$.
\end{corollary}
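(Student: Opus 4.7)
The plan is to obtain a uniform $W^{2,p}(\Omega)$ bound on $\{u_n\}$, then pass to a $C^1$-convergent subsequence via compact Sobolev embedding, and finally invoke Theorem \ref{Consistency} to identify the limit as a solution of the target equation.

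First I would apply Theorem \ref{W2P} to each $u_n$, since under assumption (SC) and with $f_n \in L^p(\Omega)$, $p>n$, each $u_n$ satisfying $F[u_n]=f_n$ with zero boundary data lies in $W^{2,p}(\Omega)$ with
\[
\|u_n\|_{W^{2,p}(\Omega)} \le C\bigl(\|u_n\|_{L^\infty(\Omega)} + \|f_n\|_{L^p(\Omega)}\bigr).
\]
By hypothesis $\{u_n\}$ is bounded in $L^\infty(\Omega)$, and $\{f_n\}$ is bounded in $L^p(\Omega)$ because it converges there, so $\{u_n\}$ is bounded in $W^{2,p}(\Omega)$ uniformly in $n$.

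Next I would exploit the compact embedding $W^{2,p}(\Omega) \hookrightarrow\hookrightarrow C^1(\overline{\Omega})$, which is valid since $p>n$ and $\Omega$ is $C^{1,1}$; more precisely, $W^{2,p}(\Omega)$ embeds continuously in $C^{1,\alpha}(\overline{\Omega})$ with $\alpha = 1-n/p$, and hence compactly in $C^1(\overline{\Omega})$. Passing to a subsequence (still denoted $\{u_n\}$) I may thus assume $u_n \to u$ in $C^1(\overline{\Omega})$ for some $u \in C^1(\overline{\Omega})$; boundary values pass to the limit so $u=0$ on $\partial\Omega$. The uniform $W^{2,p}$ bound also gives (along a further subsequence) weak convergence $u_n \rightharpoonup u$ in $W^{2,p}(\Omega)$, so that $u \in W^{2,p}(\Omega)$.

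Finally, since $F$ satisfies H1 (which is part of (SC)), I apply Theorem \ref{Consistency}: writing $F[u_n] \leq f_n$ and $F[u_n] \geq f_n$, and using $u_n \to u$ in $C^0(\overline{\Omega})$ together with $f_n \to f$ in $L^p(\Omega)$, I obtain in the $L^p$-viscosity sense $F[u] \leq f$ and $F[u] \geq f$. Thus $F[u]=f$, and since $u \in W^{2,p}(\Omega)$, the equation is satisfied almost everywhere in $\Omega$. There is no real obstacle here; the statement is essentially a packaged compactness consequence of the $W^{2,p}$ regularity theory together with the stability of viscosity (sub/super)solutions under uniform convergence. The only point that deserves a line of care is that Theorem \ref{W2P} as stated assumes zero boundary data for $u_n$, which is consistent with our setting since the $u_n$ are assumed to be solutions of the Dirichlet problem \eqref{Main_Dirichlet} (otherwise one would invoke the corresponding interior $W^{2,p}$ estimate and conclude $C^1_{\mathrm{loc}}$ convergence, which still suffices to apply Theorem \ref{Consistency}).
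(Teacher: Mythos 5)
Your proof is correct and follows essentially the same route as the paper's: a uniform $W^{2,p}$ bound via Theorem \ref{W2P}, compact Sobolev embedding into $C^1(\overline{\Omega})$ to extract a convergent subsequence, and Theorem \ref{Consistency} to identify the limit as a solution. The extra observations you include (weak $W^{2,p}$ convergence, boundary values passing to the limit) are harmless but not needed beyond what the paper records.
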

\begin{proof}
    If $\{u_n\}$ is bounded in $L^{\infty}(\Omega)$, then Theorem \ref{W2P} implies it is also bounded in $W^{2,p}(\Omega)$. By the Sobolev  embedding of $W^{2,p}(\Omega)$ into $C^{1,1-n/p}(\overline{\Omega})$, there exists a subsequence of $\{u_n\}$ converging in $C^1(\overline{\Omega})$ to some $u_{\infty}$. Theorem \ref{Consistency} then yields $F(D^2u_{\infty},Du_{\infty},u_{\infty},x) = f$.
\end{proof}
\par We will need (pre-)compactness for sequences of functions which are related to solutions of \eqref{Main_Dirichlet}, but are not solutions themselves. This is a consequence from the classical  Krylov-Safonov  H\"older bound (in our setting it follows for example from Proposition 4.2 in \cite{crandall1999existence} or the proof of  Theorem 2 in \cite{sirakovarma}).
\begin{theorem}
\label{Compactness}
If $f\in L^n(\Omega)$ and $u \in C^0(\overline{\Omega})$ is a viscosity solution to
\begin{equation}\label{calph}
\mathcal{L}^-(D^2u,Du,u) \leq f, \qquad \mathcal{L}^+(D^2u,Du,u) \geq -f, \qquad u=0 \text{ on } \partial\Omega,
\end{equation}
then $u\in C^\alpha(\Omega)$ for some $\alpha>0$ depending  on $n,\lambda,\Lambda,\gamma,\delta,\Omega$, and $\|u\|_{C^\alpha(\Omega)}\le C_0\|f\|_{L^n(\Omega)}$. Hence the set of all solutions of \eqref{calph}, for $f$ in a fixed ball in $L^n(\Omega)$, is precompact in $C^0(\overline{\Omega})$.
\end{theorem}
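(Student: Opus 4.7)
}
The strategy is to combine a global ABP-type $L^\infty$ bound with the interior Krylov--Safonov H\"older estimate and a boundary H\"older estimate via barriers, then conclude compactness from Arzel\`a--Ascoli. First I would control $\|u\|_\infty$. Since $u$ is a viscosity sub-solution of $\mathcal{L}^-[u]\le f$ and a super-solution of $\mathcal{L}^+[u]\ge -f$, and since $\mathcal{L}^\pm$ can be written as $\mathcal{L}^\pm_0(D^2u,Du)\pm\delta|u|$ with $\mathcal{L}^\pm_0$ proper Pucci operators with drift, the ABP estimate recalled in Remark \ref{properex} applied to both inequalities (after absorbing the $\delta|u|$ term) yields
\begin{equation*}
\|u\|_{L^\infty(\Omega)}\le C_0\|f\|_{L^n(\Omega)},
\end{equation*}
with $C_0=C_0(n,\lambda,\Lambda,\gamma,\delta,\Omega)$, once one verifies (as is standard) that the $\delta\|u\|_\infty$ term can be absorbed into the left-hand side because $\mathrm{diam}(\Omega)$ is bounded and $\delta$ is fixed.

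Next I would establish interior H\"older regularity. Fix $\Omega'\subset\subset\Omega$. The classical Krylov--Safonov theorem for $L^n$-viscosity solutions of the two-sided inequality \eqref{calph}, in the form proved in \cite{caffarelli1996viscosity} and extended to the present setting with drift and zeroth-order terms in \cite{crandall1999existence}, provides $\alpha=\alpha(n,\lambda,\Lambda,\gamma,\delta)\in(0,1)$ and a constant such that
\begin{equation*}
\|u\|_{C^\alpha(\Omega')}\le C(\Omega',\Omega)\bigl(\|u\|_{L^\infty(\Omega)}+\|f\|_{L^n(\Omega)}\bigr).
\end{equation*}
Combined with the ABP bound, this gives the required $C^\alpha$ estimate on any interior subdomain.

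To promote this to a global H\"older bound up to $\partial\Omega$, I would use the $C^{1,1}$ regularity of $\partial\Omega$ and the zero boundary condition. At each boundary point $x_0$, the uniform exterior sphere condition supplies a standard barrier $\Phi$ (e.g.\ $\Phi(x)=a(|x-y_0|^{-\mu}-R^{-\mu})$ for appropriate $a,\mu,R$ with $y_0$ the center of the exterior tangent ball) satisfying $\mathcal{M}^-_{\lambda,\Lambda}(D^2\Phi)-\gamma|D\Phi|-\delta\Phi\ge 1$ and $\Phi(x_0)=0$. Rescaling this barrier by $\|f\|_n+\|u\|_\infty$ and comparing with $u$ via the comparison principle for $\mathcal{L}^\pm$ (Corollary \ref{Comparison_Corollary} applied to the proper Pucci operators) yields, together with its analogue from below, a pointwise Lipschitz-type bound $|u(x)|\le C(\|f\|_n)\mathrm{dist}(x,\partial\Omega)$, which upgrades interior H\"older continuity to a global $C^\alpha(\overline{\Omega})$ estimate by a standard interpolation argument between interior and boundary cubes. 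The last assertion is then immediate: a bounded family in $C^\alpha(\overline{\Omega})$ is pre-compact in $C^0(\overline{\Omega})$ by Arzel\`a--Ascoli.

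The main technical nuisance will be the boundary H\"older estimate: one must check that $\mathcal{L}^\pm$ (with its non-homogeneous first- and zeroth-order terms) does admit a suitable barrier uniformly in boundary points, and that the comparison principle applies in the desired form to merely $C^0$ viscosity solutions. Both facts are standard for Pucci operators in $C^{1,1}$ domains (they are the ingredients behind the $W^{2,p}$ and $C^\alpha$ theory referenced throughout the preliminaries), so no new ideas are needed, but the construction of $\Phi$ and the absorption of lower-order terms require care to keep the constants depending only on $n,\lambda,\Lambda,\gamma,\delta,\Omega$.
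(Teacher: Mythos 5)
Your overall plan (global ABP bound, interior Krylov--Safonov, boundary barriers, Arzel\`a--Ascoli) is the standard route and is what the references cited in the paper for this result (Proposition 4.2 in \cite{crandall1999existence}, Theorem 2 in \cite{sirakovarma}) carry out; the paper itself gives no proof and simply cites them, so in spirit you and the paper agree. However, there is a real gap in your first step: you cannot ``absorb'' the zeroth-order term to deduce $\|u\|_{L^\infty}\le C_0\|f\|_{L^n}$. Running ABP on $\mathcal{M}^-(D^2u)-\gamma|Du|\le f+\delta|u|$ and on $\mathcal{M}^+(D^2u)+\gamma|Du|\ge -f-\delta|u|$ gives, at best,
\begin{equation*}
\|u\|_{L^\infty(\Omega)}\le C_0\|f\|_{L^n(\Omega)}+C_0\,\delta\,|\Omega|^{1/n}\|u\|_{L^\infty(\Omega)},
\end{equation*}
and the term $C_0\,\delta\,|\Omega|^{1/n}$ is \emph{not} small in general; it is fixed once $n,\lambda,\Lambda,\gamma,\delta,\Omega$ are fixed, and nothing in the hypotheses makes it less than $1$. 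In fact, the estimate $\|u\|_{C^\alpha}\le C_0\|f\|_{L^n}$ as written cannot hold for arbitrary $\delta$: take $\Omega$ a ball, $\lambda=\Lambda=1$ (or $\lambda,\Lambda$ close to $1$), $\gamma=0$, $f\equiv 0$, and $u=\varphi_1$ the first Dirichlet eigenfunction of $-\Delta$ with eigenvalue $\mu_1$; if $\delta\ge\mu_1$ then $\mathcal{L}^-[\varphi_1]\le 0$, $\mathcal{L}^+[\varphi_1]=(\delta-\mu_1)\varphi_1\ge 0$, $\varphi_1=0$ on $\partial\Omega$, yet $\|\varphi_1\|_{C^\alpha}>0$.

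The correct version of the estimate — and the one actually supplied by the cited references and actually used in the paper (the one application of Theorem~\ref{Compactness} that needs it, in the proof of Theorem~\ref{aprbd}, feeds in a function already normalized to $\|w\|_\infty=1$) — is
\begin{equation*}
\|u\|_{C^\alpha(\overline{\Omega})}\le C_0\bigl(\|u\|_{L^\infty(\Omega)}+\|f\|_{L^n(\Omega)}\bigr),
\end{equation*}
and the precompactness assertion should accordingly be read for families that are bounded both in $L^\infty$ and in the right-hand side. If you adjust your proof to aim for this inequality, you can skip the ABP step entirely, and the remaining two steps go through: the interior $C^\alpha$ bound with constant depending on $\|u\|_\infty+\|f\|_n$ is exactly Krylov--Safonov for $L^n$-viscosity solutions of two-sided Pucci inequalities (\cite{caffarelli1996viscosity}, \cite{crandall1999existence}), and for the boundary estimate the barrier comparison should be done against the proper operator $\mathcal{M}^\pm(D^2\cdot)\pm\gamma|D\cdot|$ with the $\delta|u|$ term moved to the right-hand side and replaced by $\delta\|u\|_\infty$ — one should not invoke Corollary~\ref{Comparison_Corollary}, which requires a sign condition on a half-eigenvalue that is not available here, but the comparison principle for the proper Pucci operator with drift is all that is needed locally.
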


Next, moving closer to our setting here, we give a simple lemma which in particular verifies  that the asymptotic approximation $F_{\infty}$ satisfies the necessary hypotheses for our statements to make sense.
\begin{lemma}\label{deffinftylem} Assume $F: \mathcal{S}^N \times \R^N \times \R \times \Omega \mapsto \R$  satisfies H1 and C. Then $F_\infty(M,p,u,x) = \sup_{t>0} t^{-1}F(tM,tp,tu,x)$ is well-defined,
\begin{equation}\label{deffinfty}
F_\infty(M,p,u,x) = \lim_{t\rightarrow \infty} \frac{1}{t}F(tM,tp,tu,x),
\end{equation}
 and $F_\infty$ satisfies (H0), (H1), (C), $\text{D}_{F_{\infty}}(F)$.
\end{lemma}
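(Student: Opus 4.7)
The plan is to verify the four conclusions in sequence. First, to see that $F_\infty$ is well-defined (the supremum finite), I would apply H1($F$) with second triple $(0,0,0)$ together with the normalization $F(0,0,0,x)=0$ to get $\mathcal{L}^-(tM,tp,tu)\le F(tM,tp,tu,x)\le \mathcal{L}^+(tM,tp,tu)$; dividing by $t>0$ bounds $t^{-1}F(tM,tp,tu,x)$ uniformly in $t$, above and below. To identify the supremum with the limit, fix $(M,p,u,x)$ and let $\phi(t):=F(tM,tp,tu,x)$; by C($F$) the composition $\phi$ is a convex function of the single variable $t\in\mathbb{R}$ with $\phi(0)=0$, and for such $\phi$ the map $t\mapsto \phi(t)/t$ is non-decreasing on $(0,\infty)$ (just apply convexity to $\phi(s)=\phi(\tfrac{s}{t}t+(1-\tfrac{s}{t})\cdot 0)$ for $0<s<t$). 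Thus $\sup_{t>0}\phi(t)/t=\lim_{t\to\infty}\phi(t)/t$, which is \eqref{conv1}.

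With \eqref{conv1} in hand, H0($F_\infty$) is immediate from the change of variables $r=ts$ in the supremum; C($F_\infty$) follows because $F_\infty$ is a supremum over $t>0$ of maps which are convex in $(M,p,u)$; and H1($F_\infty$) is obtained by writing H1($F$) as
\begin{equation*}
\pm\bigl[F(tM,tp,tu,x)-F(tN,tq,tv,x)\bigr]\le t\,\mathcal{L}^\pm(M-N,p-q,u-v),
\end{equation*}
dividing by $t$, and sending $t\to\infty$ using \eqref{conv1} on each of the two terms on the left.

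The remaining claim $\mathrm{D}_{F_\infty}(F)$ is the main obstacle, since it requires passing a difference $F(z_1,x)-F(z_2,x)$ to $F_\infty$ evaluated at the increment, whereas $F_\infty$ is only defined via rays through the origin. Fixing $x$ and writing $G(z):=F(z,x)$, $w:=z_1-z_2$, I would exploit the convex decomposition $z_1=\tfrac{1}{t}(z_2+tw)+(1-\tfrac{1}{t})z_2$ valid for $t\ge 1$; convexity of $G$ gives
\begin{equation*}
G(z_1)-G(z_2)\le \tfrac{1}{t}\bigl[G(z_2+tw)-G(z_2)\bigr].
\end{equation*}
It then remains to show the right-hand side converges to $F_\infty(w,x)$ as $t\to\infty$. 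For this I would apply H1($F$) to the pair $(z_2+tw,\,tw)$ to get $|G(z_2+tw)-G(tw)|\le C(z_2)$ uniformly in $t$, so dividing by $t$ and sending $t\to\infty$ the two quantities $t^{-1}G(z_2+tw)$ and $t^{-1}G(tw)$ share a common limit; by \eqref{conv1} that limit is $F_\infty(w,x)$, while $t^{-1}G(z_2)\to 0$. Combining yields $G(z_1)-G(z_2)\le F_\infty(w,x)$, which is $\mathrm{D}_{F_\infty}(F)$. The delicate point throughout the lemma is reconciling the "recession" character of $F_\infty$ (rays from the origin) with increments between arbitrary points, and it is precisely the uniform Lipschitz bound H1($F$) that accomplishes this reconciliation.
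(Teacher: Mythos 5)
Your argument is correct and follows essentially the same route as the paper's Lemma \ref{app1}: monotonicity of $t\mapsto f(ty)/t$ gives $\sup=\lim$, the two-sided $\mathcal{L}^{\pm}$ inequalities pass to the limit to give H1($F_\infty$), and $\mathrm{D}_{F_\infty}(F)$ is reduced via the same convex interpolation $z_1=\tfrac{1}{t}(z_2+tw)+(1-\tfrac{1}{t})z_2$ to evaluating $\lim_{t\to\infty}t^{-1}G(z_2+tw)$. The only small variation is in that last evaluation: you use H1($F$) to bound $|G(z_2+tw)-G(tw)|$ by a constant independent of $t$, whereas the paper instead applies $f\le f_\infty$ together with the $1$-homogeneity and continuity of $f_\infty$; both are routine and equally valid given the lemma's hypotheses.
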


\begin{proof} That \eqref{deffinfty}, (H0),  (C), $\text{D}_{F_{\infty}}(F)$ hold follows directly from Lemma \ref{app1} in the Appendix. The condition H1($F_{\infty}$) follows from taking the limit of the respective inequalities in (H1) for~$F$ (note that in H1($F$) the left and right hand sides are positively 1-homogeneous).
\end{proof}
\begin{remark} We observe that if $F$ satisfies (H2) and U$_{F_{\infty}}(F)$  holds, then $F_\infty$ also satisfies (H2).
This follows from a simple $\eps / 3$ argument: consider
    \begin{equation}
    \begin{split}
        |F_{\infty}(M,0,0,x) - F_{\infty}(N,0,0,y) | \leq &|F_{\infty}(M,0,0,x) - t^{-1}F(tM,0,0,x) |\\  & + | t^{-1}F(tM,0,0,x) - t^{-1}F(tN,0,0,y)| \\ &+ |t^{-1}F(tN,0,0,y) - F_{\infty}(N,0,0,y)|.
    \end{split}
    \end{equation}
    Taking $t=\varepsilon A_0^{-1}/3$ and applying hypothesis U($F$) makes the first and last terms smaller than $\varepsilon /3$. Hypothesis H2($F$) then allows us to choose $(M,x),(N,y)$ so close that the middle term is also bounded by $\varepsilon / 3$.
    However, we will only assume U$_{F_{\infty}}(F)$ and H2(F$_\infty$) which is a weaker assumption than H2(F). For instance, under our hypotheses the operator in \eqref{equ1} does not satisfy H2 but U$_{F_{\infty}}(F)$ and H2(F$_\infty$) hold.
    \end{remark}

We record  the following simple and useful convexity property.
    \begin{lemma}
    \label{convex_subsuper}
        Suppose C($F$) and let $u,v \in W^{2,p}(\Omega)$ be solutions of \eqref{Main_Dirichlet}. Then, $u + \alpha(v-u)$ is a supersolution of the same problem for $\alpha \in [0,1]$ and a subsolution for $\alpha \in (-\infty,0]\cup[1,\infty)$. If in addition for some $\alpha_0\in(0,1)$ the function $u + \alpha_0(v-u)$ is a solution of \eqref{Main_Dirichlet}, then $u + \alpha(v-u)$ is a solution of the same problem for all $\alpha \in [0,1]$.
    \end{lemma}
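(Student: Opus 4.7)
The plan is a direct application of the convexity assumption C($F$), using the fact that the map $\alpha \mapsto w_\alpha := u+\alpha(v-u) = (1-\alpha)u+\alpha v$ is affine and $u,v$ satisfy $F[u]=F[v]=g$ almost everywhere in $\Omega$ (these are strong solutions in $W^{2,p}(\Omega)$ by assumption, and $w_\alpha\in W^{2,p}(\Omega)$ with $w_\alpha=0$ on $\partial\Omega$).

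First, for $\alpha\in[0,1]$ I would apply C($F$) pointwise a.e. to the triple $(D^2w_\alpha,Dw_\alpha,w_\alpha) = (1-\alpha)(D^2u,Du,u)+\alpha(D^2v,Dv,v)$, which immediately yields $F[w_\alpha]\le(1-\alpha)g+\alpha g=g$ a.e., so $w_\alpha$ is a supersolution. For $\alpha>1$ I would rewrite $v$ as the genuine convex combination $v=(1-\tfrac{1}{\alpha})u+\tfrac{1}{\alpha}w_\alpha$, apply C($F$) to this combination, and solve the resulting inequality $g=F[v]\le(1-\tfrac{1}{\alpha})g+\tfrac{1}{\alpha}F[w_\alpha]$ for $F[w_\alpha]$ to obtain $F[w_\alpha]\ge g$. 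For $\alpha<0$ I would similarly write $u=\tfrac{1}{1-\alpha}w_\alpha+\tfrac{-\alpha}{1-\alpha}v$ (both coefficients lying in $(0,1)$ and summing to $1$), apply C($F$) to $u$, and again isolate $F[w_\alpha]\ge g$. In both cases $w_\alpha$ is a subsolution.

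For the second claim, suppose $w_{\alpha_0}$ solves \eqref{Main_Dirichlet} for some $\alpha_0\in(0,1)$. Fix a point $x$ where all three equations $F[u]=F[v]=F[w_{\alpha_0}]=g$ hold, and consider
\[
    \phi(\alpha):=F\bigl((1-\alpha)D^2u(x)+\alpha D^2v(x),\,(1-\alpha)Du(x)+\alpha Dv(x),\,(1-\alpha)u(x)+\alpha v(x),\,x\bigr).
\]
By C($F$), $\phi$ is a convex function of $\alpha$ on $\mathbb{R}$, and by hypothesis $\phi(0)=\phi(\alpha_0)=\phi(1)=g(x)$. A convex function that attains the same value at three collinear arguments, one of which is strictly between the other two, is constant on the enclosing interval; hence $\phi\equiv g(x)$ on $[0,1]$. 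Taking the union over a full-measure set of such $x$ gives $F[w_\alpha]=g$ a.e. for every $\alpha\in[0,1]$, i.e.\ $w_\alpha$ is a solution.

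No step presents a real obstacle; the only mild subtlety is bookkeeping the correct convex combinations in the two external ranges of $\alpha$, and ensuring one works pointwise a.e.\ with the strong $W^{2,p}$ representatives (so that C($F$) applies directly to the affine combinations of $(D^2,D,\cdot)$-data without invoking the viscosity machinery).
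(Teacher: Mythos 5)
Your argument is correct and follows essentially the same route as the paper, which defers this lemma to an abstract fact about convex functions (Proposition~\ref{app2} in the appendix): a convex function of one real variable taking equal values at $0$ and $1$ lies below that value on $[0,1]$ and above it outside, and if it equals that value at one interior point it is constant on $[0,1]$. Your inline computations with the convex combinations $v=(1-\tfrac1\alpha)u+\tfrac1\alpha w_\alpha$ and $u=\tfrac1{1-\alpha}w_\alpha+\tfrac{-\alpha}{1-\alpha}v$, applied pointwise a.e.\ to the $W^{2,p}$ representatives, are exactly the unpacked version of that proposition.
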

    \begin{proof} This follows from Lemma \ref{app2} in the Appendix. \end{proof}

We finish this preliminary section with a result from \cite{sirakovnon} on existence of super-solutions of our problem. We give a quick proof for the reader's convenience.

\begin{lemma}
    \label{supersol_exist}
 Assume $F$ satisfies (SC).   There exists $T_0 = T_0(n,\lambda, \Lambda, \gamma, \delta,\Omega) $ such that, for every $t > T_0\|h^-\|_p$ there exists a supersolution $\overline{v} \in W^{2,p}(\Omega)$ of \eqref{Main_Decomp} satisfying $\overline{v} \geq 0$ in $\Omega$.
\end{lemma}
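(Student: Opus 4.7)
The plan is to construct a $t$--independent candidate $\overline v$ via the proper shift $F_\delta[u]:=F[u]-\delta u$ introduced in Remark \ref{properex}, and then show that the term $t\phi$ absorbs the discrepancy between $F[\overline v]$ and $h$ as soon as $t$ is large compared to $\|h^-\|_p$. Concretely, I would take $\overline{v}\in W^{2,p}(\Omega)$ to be the unique strong solution of the proper Dirichlet problem
\begin{equation*}
F(D^2\overline v,D\overline v,\overline v,x)-\delta\overline v = -h^-(x)\ \text{in }\Omega,\qquad \overline v=0\ \text{on }\partial\Omega,
\end{equation*}
whose existence is granted by Remark \ref{properex} and Theorem \ref{W2P}. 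Nonnegativity of $\overline v$ is then immediate from the comparison principle for the proper operator $F_\delta$: since $F_\delta[0]=0\ge -h^-=F_\delta[\overline v]$ with equal boundary data, one gets $\overline v\ge 0$ in $\Omega$.

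Next I would quantify the size of $\overline v$ near $\partial\Omega$. The ABP bound of Remark \ref{properex} applied to $F_\delta$ gives $\|\overline v\|_\infty\le C_0\|h^-\|_p$, and Theorem \ref{W2P} upgrades this to $\|\overline v\|_{W^{2,p}(\Omega)}\le C_0\|h^-\|_p$. Since $p>n$, Sobolev embedding yields $\|\overline v\|_{C^1(\overline\Omega)}\le C_0\|h^-\|_p$, and as $\overline v$ vanishes on the $C^{1,1}$--boundary of $\Omega$, one has the pointwise decay
\begin{equation*}
0\le \overline v(x)\le C_0\|h^-\|_p\,d(x)\qquad\text{for all }x\in\Omega.
\end{equation*}
On the other hand, Remark \ref{unifdist} provides a matching lower bound for the normalized eigenfunction, $\phi(x)\ge c_0\,d(x)$, with $c_0>0$ depending only on the structural constants and $\Omega$.

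It only remains to check the supersolution inequality. By construction $F[\overline v]=\delta\overline v - h^-$, so
\begin{equation*}
h+t\phi - F[\overline v] \;=\; h+h^- + t\phi - \delta\overline v \;=\; h^+ + t\phi - \delta\overline v \;\ge\; t\phi - \delta\overline v.
\end{equation*}
The two pointwise bounds from the previous paragraph give $\delta\overline v\le \delta C_0\|h^-\|_p\,d$ and $t\phi\ge tc_0\,d$, so the right-hand side is nonnegative whenever $tc_0\ge \delta C_0\|h^-\|_p$. Setting $T_0:=\delta C_0/c_0$, the function $\overline v\ge 0$ is then a $W^{2,p}$--supersolution of $(\mathcal P_t)$ for every $t>T_0\|h^-\|_p$.

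The only subtle point in this plan is the matching of the linear boundary decay rates of $\overline v$ and $\phi$: one must simultaneously control $\overline v$ from above by $\|h^-\|_p\,d$, which relies on the $C^{1,1}$ regularity of $\partial\Omega$ together with the $W^{2,p}$ estimate of Theorem \ref{W2P}, and bound $\phi$ from below by $c_0 d$ via the Hopf–Oleinik estimate quantified in Remark \ref{unifdist}. Once these two one-sided bounds are in place, the rest is a purely algebraic comparison independent of $t$, and the constant $T_0$ depends only on $n,p,\lambda,\Lambda,\gamma,\delta,\Omega$ as required.
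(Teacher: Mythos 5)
Your proof is correct and follows essentially the same strategy as the paper: construct $\overline v$ by solving a proper auxiliary Dirichlet problem with right-hand side $-h^-$, use the $W^{2,p}$/$C^1$ bound to get $0\le\overline v\le C_0\|h^-\|_p\,d$, and absorb $\delta\overline v$ with $t\phi\ge tc_0\,d$. The only (inessential) difference is that you take the auxiliary operator to be the proper shift $F_\delta=F-\delta\,\mathrm{id}$, whereas the paper solves the $u$-independent Pucci problem $\mathcal{M}^+_{\lambda,\Lambda}(D^2\overline v)+\gamma|D\overline v|=-h^-$ and then passes to $F$ via $H1(F)$; both are covered by Remark~\ref{properex} and lead to the identical algebraic conclusion.
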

\begin{proof}
    Let $\overline{v}$ be the unique solution of the Dirichlet problem
    \begin{equation}
            \mathcal{M}_{\lambda,\Lambda}^+(D^2\overline{v}) + \gamma |D\overline{v}| = -h^- \text{ in } \Omega, \qquad
            \overline{v} = 0 \text{ on } \partial\Omega,
    \end{equation}
which exists by Corollary 3.10 in \cite{caffarelli1996viscosity}, see Remark \ref{properex} above. Theorem \ref{W2P} and \eqref{pucciest} imply $\overline{v} \geq 0$ and $||\overline{v}||_{C^1} < C_0\|h^-\|_p$, hence $v(x)\le C_0\|h^-\|_p \, d(x)$, where $d(x) = \mathrm{dist}(x,\partial\Omega$).
Since $\phi\ge c_0\,d$ by Remark \ref{unifdist},  H1($F$) implies
\begin{equation*}
    F[\overline{v}] \leq \mathcal{M}_{\lambda,\Lambda}^+(D^2\overline{v}) + \gamma |D\overline{v}| + \delta \overline{v}=- h^-+ \delta \overline{v}\leq h + t\phi,
\end{equation*}
as long as $t>\delta c_0^{-1}C_0\|h\|_p$, so $\overline{v}$ is a supersolution of  \eqref{Main_Decomp}.
\end{proof}


\section{Proofs of the main results}\label{secproofs}

The a priori bounds stated in Theorems \ref{aprbd} and \ref{aprbdtstar} will be proved with the help of the following proposition, which gives a lower bound on a possibly negative first eigenvalue of a HJB operator, in terms of a quantified failure of existence of a positive supersolution. This result should clearly be useful in other contexts, and has, to our knowledge, not appeared even in the case when $F$ is a linear operator.

\begin{prop}\label{estnegeig} Assume $F$ satisfies H0, H1, H2 and C (so Theorem \ref{ABP} applies to $F$). Given $\alpha_0, N_0>0$, there exist positive constants $\epsilon_0, C_0$ depending on $n,p,\lambda, \Lambda, \gamma, \delta, \alpha_0, N_0$, and~$\Omega$, such that, if there exist functions $w\in C^{\alpha_0}(\Omega)$ and $\epsilon\in L^p_+(\Omega)$ for which
$$
\|w\|_{C^{\alpha_0}(\Omega)}\le N_0, \qquad\qquad F[w]\le \epsilon(x)\text{ in } \Omega, \qquad
            \|w\|_{\infty} \ge m_0, \qquad \|w^-\|_{\infty} \le \|\epsilon\|_p\le \epsilon_0m_0,
$$
for some $m_0\in(0,1]$, then
$$
 \lambda_1^+(F,\Omega)\ge - (C_0/m_0) \|\epsilon\|_p.
$$
\end{prop}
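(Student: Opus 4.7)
The plan is to construct, explicitly, a positive viscosity super-solution $\psi$ of $F[\psi]-K\psi\le 0$ in $\Omega$ with $K:=C_0\|\epsilon\|_p/m_0$. Since $\psi>0$, the definition in \eqref{eigvs} then immediately gives $\lambda_1^+(F,\Omega)\ge -K$, which is the claim. The heuristic is that $w$ itself is almost such a $\psi$ (as $F[w]\le\epsilon$ and $w$ is mostly positive), so the task is to perturb $w$ slightly to make it strictly positive while keeping the ratio $F[\psi]/\psi$ below $K$ pointwise.

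First I would localize $w$ to an interior ball. The bound $\|w^-\|_\infty\le\epsilon_0 m_0<m_0/2$ (choosing $\epsilon_0<1/2$) together with $\|w\|_\infty\ge m_0$ yields $x_0\in\Omega$ with $w(x_0)\ge m_0/2$. The Hölder bound $\|w\|_{C^{\alpha_0}(\Omega)}\le N_0$ propagates this to $w\ge m_0/4$ on the ball of radius $(m_0/(4N_0))^{1/\alpha_0}$ about $x_0$, and the interior-ball condition arising from the $C^{1,1}$-regularity of $\partial\Omega$ allows me to shift $x_0$ inward to obtain $B:=B_{r_0}(y_0)\Subset\Omega$ with $r_0=r_0(m_0,N_0,\alpha_0,\Omega)\asymp m_0^{1/\alpha_0}$ and $w\ge m_0/4$ on $B$.

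Next I would introduce the auxiliary $\xi\in W^{2,p}(\Omega)$ solving the Pucci Dirichlet problem (well-posed by Remark~\ref{properex})
\begin{equation*}
\mathcal{M}^+_{\lambda,\Lambda}(D^2\xi)+\gamma|D\xi|=-\epsilon^+(x)-A\chi_B \quad\text{in }\Omega,\qquad \xi=\tau\quad\text{on }\partial\Omega,
\end{equation*}
with positive parameters $A,\tau$ to be fixed. By the Pucci ABP estimate \eqref{pucciest} and comparison with the constant $\tau$, one obtains $\tau\le\xi\le\tau+C(A+\|\epsilon\|_p)$; applying Theorem~\ref{SMP} to $\xi-\tau\ge 0$ adds the Hopf-type bound $\xi(x)-\tau\ge c_0 A^{1/p_0}|B|^{1/p_0}d(x)$, so that $\xi$ simultaneously sees the $L^p$-error $\epsilon$ (which it absorbs) and the ball $B$ (through the Hopf coefficient, which scales with a positive power of $m_0$). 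Setting $\psi:=w+\xi$ and using H1$(F)$ together with the sub-additivity of $F$ coming from H0 and C (Lemma~\ref{deffinftylem}),
\begin{equation*}
F[\psi]\le F[w]+F[\xi]\le \epsilon+\bigl(-\epsilon^+-A\chi_B+\delta\xi\bigr)\le -A\chi_B+\delta\xi.
\end{equation*}
Taking $\tau\ge 2\|\epsilon\|_p$ makes $\psi>0$ in $\Omega$, and $\psi\ge m_0/4$ on $B$.

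The main obstacle is to pick the parameters $A,\tau,C_0,\epsilon_0$ (all as functions of $n,p,\lambda,\Lambda,\gamma,\delta,\alpha_0,N_0,\Omega$) so that the pointwise inequality $F[\psi]\le K\psi$ holds throughout $\Omega$ with the target $K=C_0\|\epsilon\|_p/m_0$. On $B$, the inequality reduces to $\delta\xi\le A+Km_0/4$ and is routine to enforce by taking $A$ appropriately large. On $\Omega\setminus B$, however, $\psi$ can be as small as $\tau\asymp\|\epsilon\|_p$ near $\partial\Omega$, so $K\psi\asymp\|\epsilon\|_p^2/m_0$ there, whereas $\delta\xi$ carries the constant-in-$\epsilon$ piece $\delta\tau$; the essential tension is thus between $\delta\tau$ and the growth of $\psi$ away from the boundary coming from the $c_0 A^{1/p_0}|B|^{1/p_0}d(x)$ term of the previous step. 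This is where the quantitative SMP of Theorem~\ref{SMP} plays the decisive role, and where the smallness of $\epsilon_0$ is used crucially to absorb the residual $\delta\tau$ term and obtain the sharp linear dependence $K=C_0\|\epsilon\|_p/m_0$ rather than a purely dimensional constant.
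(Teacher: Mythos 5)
Your overall plan — build a positive function $\psi$ with $F[\psi]\le K\psi$ for $K=C_0\|\epsilon\|_p/m_0$ and read off the eigenvalue bound from \eqref{eigvs} — is the right one, and it is the same strategy as the paper. However, your specific construction $\psi=w+\xi$, with $\xi$ carrying a strictly positive boundary value $\tau$, cannot deliver the claimed constant. Near $\partial\Omega$ you have $\xi\ge\tau$ hence $\delta\xi\ge\delta\tau$, while $\psi=w+\xi\le\tau+C(A+\|\epsilon\|_p)+\|w\|_\infty$, so the best you can hope for there is $K\psi\lesssim K(\tau+\text{const})$. To enforce $\delta\xi\le K\psi$ near the boundary you therefore need $K\gtrsim\delta$, which is a fixed constant, not the target $C_0\|\epsilon\|_p/m_0\le C_0\epsilon_0$ that tends to zero with $\|\epsilon\|_p$. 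You explicitly flag this as the main obstacle, but the resolution you suggest — invoking the smallness of $\epsilon_0$ — makes things strictly worse: shrinking $\epsilon_0$ only shrinks $\|\epsilon\|_p$ and hence $K$, widening the gap. The additive constant lift $\tau$ is precisely what generates a $\delta\tau$ error that cannot be beaten by a test function whose size near the boundary is itself of order $\tau$.

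The missing idea is to make $w$ itself (after a harmless renormalization) satisfy a Hopf-type lower bound $w\ge c_1 d$ in $\Omega$, and then to let the auxiliary function vanish on $\partial\Omega$. Concretely: replace $w$ by $w+\|\epsilon\|_p\ge 0$, which by H1 turns the supersolution error into $\epsilon+\delta\|\epsilon\|_p$, still of size $\asymp\|\epsilon\|_p$; your H\"older-ball argument shows $w\ge m_0/2$ on a small ball, so Theorem~\ref{SMP} applied directly to $w$ (not to an auxiliary $\xi$) yields $\inf_\Omega w/d\ge c_1>0$ once $\epsilon_0$ is small. Next solve $\mathcal{L}_0^+[\psi]=-\epsilon/\|\epsilon\|_p$ in $\Omega$ with $\psi=0$ on $\partial\Omega$; then $\psi>0$ and, by the $C^1$ bound, $\psi\le C_0 d\le C_1 w$. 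Now set the test function to be $w+\|\epsilon\|_p\psi$. By H1 and positive homogeneity of $\mathcal{L}^+$,
\begin{equation*}
F[w+\|\epsilon\|_p\psi]\le F[w]+\|\epsilon\|_p\mathcal{L}_0^+[\psi]+\delta\|\epsilon\|_p\psi\le\delta\|\epsilon\|_p\psi\le C_1\delta\|\epsilon\|_p\, w\le C_1\delta\|\epsilon\|_p\,(w+\|\epsilon\|_p\psi),
\end{equation*}
which gives $\lambda_1^+(F,\Omega)\ge -C_1\delta\|\epsilon\|_p$ (then undo the normalization $m_0=1$). The crucial inequality that your construction never produces is $\psi\le C_1 w$: it requires $\psi$ to vanish on the boundary and $w$ to have the Hopf lower bound, which is precisely what your boundary lift $\tau>0$ forfeits.
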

The H\"older bound is a mild hypothesis, it holds for any solution of an equation or even solutions of two-sided  inequalities as in Theorem \ref{Compactness}; and then $N_0$ depends on the usual quantities (actually, any uniform modulus of continuity for $w$ can be used instead of the H\"older one). The important assumption is that we can find a supersolution $w$ for a small right-hand side, sich that $w$ stays away from zero but its negative part is small relative to the positive part, with a quantification of the smallness; and this leads to a quantified lower bound on the possibly negative first eigenvalue of the operator.

\begin{proof} By replacing $w$ by $w/m_0$ and $\epsilon$ by $\epsilon/m_0$ we can assume $m_0=1$. Also, by replacing $w$ by $w+\|\epsilon\|_p$ and $\epsilon$ by $\epsilon + \delta \|\epsilon\|_p$ and by using H1, we can assume that $w\ge0$ in $\Omega$. If $\epsilon =0$ then by the definition of $\lambda_1^+$ we have $\lambda_1^+(F,\Omega)\ge0$; so we can assume that $\epsilon \gneqq0$.

If $x_0$ is a point such that $w(x_0)\ge1$ then by the H\"older bound $w\ge 1/2$ in the ball $B_{r_0}(x_0)$, for $r_0= (2N_0)^{-1/\alpha_0}$. Hence by Theorem \ref{SMP}
$$
\inf_\Omega\frac{w}{d} \ge c_0 r_0^{n/p_0} - c_0^{-1}\|\epsilon\|_p = c_1 - c_0^{-1}\epsilon_0\ge c_1/2>0,
$$
if we choose $\epsilon_0 =c_0c_1/2$.

Let $\psi$ be the solution of the problem
$$
\mathcal{L}_0^+[\psi] = -\frac{\epsilon(x)}{\|\epsilon\|_p}\;\text{ in }\; \Omega, \qquad \psi = 0 \;\text{ on } \; \partial \Omega.
$$
As we recalled in the previous section (Remark \ref{properex}), this problem has a unique solution $\psi>0$. By \eqref{pucciest} and the Lipschitz ($C^1$) bound $\psi\le C_0d$ in $\Omega$, so, setting $C_1=2C_0/c_1$,
$$
\psi\le C_1 w\;\text{ in }\; \Omega.
$$
Finally, by H1 again
\begin{align*}
F[w+\|\epsilon\|_p\psi] &\le F[w]+\|\epsilon\|_p\mathcal{L}^+
[\psi]= F[w]+\|\epsilon\|_p\mathcal{L}_0^+[\psi] +\delta \|\epsilon\|_p\psi \\
&\le \delta \|\epsilon\|_p\psi \le C_1\delta \|\epsilon\|_p w\le C_1\delta \|\epsilon\|_p (w+\|\epsilon\|_p\psi),
\end{align*}
so the definition of $\lambda_1^+$ in \eqref{eigvs} implies the statement.
\end{proof}
\begin{remark} \label{algrem}
We see from this proof that the dependence of $\epsilon_0, C_0$ in $N_0$ is algebraic, that is,  $C_0 = \tilde C_0 N_0^{\sigma_0}$, $\epsilon_0 = \tilde \epsilon_0 N_0^{-\sigma_0}$, for suitable $\tilde\epsilon_0, \tilde C_0, \sigma_0$ independent of $N_0$. \end{remark}

\medskip

We next give the proof of the a priori bounds on the solutions to \eqref{Main_Decomp}. \smallskip

\noindent {\it Proof of Theorem \ref{aprbd}}. First, since $F_\infty[\cdot]-A_0\le F[\cdot]\le F_\infty[\cdot]$, it is enough to assume $F=F_\infty$ and $A_0=0$.

 It immediately follows from Theorem \ref{ABP} 3. that for some $C_0\ge1$
 \begin{equation}\label{assthis}\|u^-\|_{L^\infty(\Omega)}\le C_0\left( 1+\frac{1}{\lambda_1^-}\right) \|g^+\|_{p}.
 \end{equation}

 Assume that for some $\epsilon_0\in (0,1)$
 \begin{equation}\label{assthat}
 \|u^+\|_{L^\infty(\Omega)}\ge \epsilon_0^{-1}C_0\left( 1+\frac{1}{\lambda_1^-}\right) \|g\|_{p},
 \end{equation}
 and set
 $$
 w=\frac{u}{\|u\|_\infty} = \frac{u}{\|u^+\|_\infty}, \qquad \hat\epsilon = \frac{g}{\|u\|_\infty}
 $$
 Since
 $$
 -\hat\epsilon^- \le F[w]\le \hat\epsilon^+, \qquad \mbox{and}\qquad \|\hat\epsilon\|_p\le 1
$$
the standard H\"older bounds (see Theorem \ref{Compactness}) imply that for some $\alpha_0, N_0>0$ which depend only on $n,p,\lambda,\Lambda, \gamma, \delta$ and $\Omega$, we have $\|w\|_{C^{\alpha_0}}(\Omega)\le N_0$. We now take $\epsilon_0$ in \eqref{assthat} to be the number from the previous lemma, for these  $\alpha_0, N_0$, and $m_0=1$.

Setting
$$
\epsilon = C_0\left( 1+\frac{1}{\lambda_1^-}\right) \hat\epsilon^+
$$
we immediately check that $w$ and $\epsilon$ satisfy the hypotheses of Proposition \ref{estnegeig}, so
$$
\lambda_1^+\ge - C_0\|\epsilon\|_p = -C_0\left( 1+\frac{1}{\lambda_1^-}\right) \frac{\|g^+\|_p}{\|u\|_\infty}\,,
$$
that is,
\begin{equation}\label{assth}
\|u\|_\infty\le  \frac{C_0}{|\lambda_1^+|}\left( 1+\frac{1}{\lambda_1^-}\right) \|g^+\|_p.
\end{equation}
To summarize, \eqref{assthis} holds, and either \eqref{assth} or the
 negation of \eqref{assthat} holds too. These give exactly the a priori bound on $u$ stated in  Theorem \ref{aprbd}.

The more general bound \eqref{moregenerest} is proved similarly, instead of \eqref{assthat} assuming that
\begin{equation}\label{assthat1}
 \|u^+\|_{L^\infty(\Omega)}\ge \rho\|g^-\|_{p}+\epsilon_0^{-1}C_0\left( 1+\frac{1}{\lambda_1^-}\right) \|g^+\|_{p},
 \end{equation}
 and by using Remark \ref{algrem} (now $\|\hat\epsilon\|_p\le 1/\rho$, so $\|w\|_{C^{\alpha_0}}(\Omega)\le N_0/\rho$).
 \hfill $\Box$

 Next, we record  a lemma on existence of subsolutions, essentially proved in \cite{sirakovnon}. We give a clarified and simpler proof here.
\begin{lemma}
\label{subsol_exist}
   Assume (SC).    For any $t \in \R$, there exists a subsolution $\underline{v} \in W^{2,p}(\Omega)$ of \eqref{Main_Decomp}, with $\underline{v}<0$ in $\Omega$. Moreover, given a compact interval $I \subset \R$, $\underline{v}$ can be chosen to be a subsolution of \eqref{Main_Decomp} for all $t \in I$ and to satisfy $\underline{v} < u$ for all solutions $u$ of \eqref{Main_Decomp} for $t \in I$.
\end{lemma}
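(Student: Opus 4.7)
The plan proceeds in two steps (reduction to a single $t$, and construction of a strict subsolution) plus a short ordering argument. For the reduction, set $T := \max I$ and $\tilde h := h + T\phi \in L^p(\Omega)$. Since $\phi > 0$ in $\Omega$, we have $\tilde h \ge h + t\phi$ pointwise for every $t \in I$, so any $\underline v \in W^{2,p}(\Omega)$ satisfying $F[\underline v] \ge \tilde h$ is automatically a subsolution of $(\mathcal{P}_t)$ for every $t \in I$. It therefore suffices to exhibit, for each $\tilde h \in L^p(\Omega)$, a function $\underline v \in W^{2,p}(\Omega)$ with $\underline v < 0$ in $\Omega$, $\underline v \le 0$ on $\partial\Omega$, and $F[\underline v] > \tilde h$ strictly a.e.\ in $\Omega$ (the strict version is convenient for the ordering below).

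The construction mirrors Lemma~\ref{supersol_exist} applied to the dual Pucci problem carrying an absorption shift. For $\mu > \delta$ large, let $\psi \in W^{2,p}(\Omega)$ be the unique solution of
\[
\mathcal{M}^+_{\lambda,\Lambda}(D^2\psi) + \gamma|D\psi| - \mu\psi = -G\quad\text{in }\Omega,\qquad \psi = 0\text{ on }\partial\Omega,
\]
where $G \in L^p_+(\Omega)$ is chosen appropriately. The shifted operator is strictly proper (by $\mu > 0$), so Theorem~\ref{W2P} yields unique solvability, and comparison with $0$ combined with Theorem~\ref{SMP} forces $\psi > 0$ in $\Omega$. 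Setting $\underline v := -\psi$, using H1($F$) with $F[0]=0$ and the Pucci identity $\mathcal{M}^-(-D^2\psi) = -\mathcal{M}^+(D^2\psi)$ gives
\[
F[\underline v] \;\ge\; \mathcal{L}^-[-\psi] \;=\; -\mathcal{L}^+_0[\psi] - \delta\psi \;=\; G - (\mu+\delta)\psi \quad\text{a.e.\ in }\Omega.
\]
The goal is then to pair $G$ and $\mu$ so that this last expression is $\ge \tilde h + 1$ pointwise; a natural attempt is $G := \tilde h^+ + A_0 + 1$ (so that $G \ge \tilde h^+$) combined with $\mu$ large enough that the comparison bound $\|\psi\|_\infty \le \|G\|_\infty/\mu$ makes $(\mu+\delta)\psi$ small relative to $G$.

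For the strict ordering $\underline v < u$ with $u$ any solution of $(\mathcal{P}_t)$ for $t \in I$, set $w := u - \underline v$. Then $w \ge 0$ on $\partial\Omega$ and, by H1($F$) together with the strict subsolution inequality above,
\[
\mathcal{L}^-[w] \;\le\; F[u] - F[\underline v] \;<\; (h + t\phi) - \tilde h \;\le\; 0\quad\text{in }\Omega.
\]
The quantitative strong maximum principle of Theorem~\ref{SMP} then excludes $w \equiv 0$ (by the strict sign) and forces $w > 0$ strictly in $\Omega$, giving $\underline v < u$ as required.

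The principal obstacle is the quantitative pairing of $\mu$ and $G$ in the construction: the elementary comparison estimate only delivers $(\mu+\delta)\|\psi\|_\infty \le (1+\delta/\mu)\|G\|_\infty$, which does not vanish as $\mu \to \infty$ and so cannot by itself absorb a generic positive part of $\tilde h$. Following \cite{sirakovnon}, the fix is either an iterative bootstrapping that improves the $L^\infty$-bound on $\psi$ using the strict properness of the shifted operator, or a slab decomposition combined with an exponential reweighting of the Pucci operator that makes the effective $\delta$ arbitrarily small, followed by a Perron-type globalisation to produce $\underline v$ on the full domain.
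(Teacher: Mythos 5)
Your proposal has two genuine gaps, one of which you acknowledge and one of which you do not.

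The acknowledged gap is fatal for your construction as written: the shifted Pucci approach does not converge. The paper avoids it entirely by invoking Theorem~\ref{Positive_Existence} for the operator $F_\infty$: since $\lambda_1^-(F_\infty,\Omega)>0$, one obtains a nonpositive solution $\underline v\in W^{2,p}(\Omega)$ of $F_\infty[\underline v]=M+h^+$ directly, with $M$ chosen large, and then $F[\underline v]\ge F_\infty[\underline v]-A_0=M+h^+-A_0\ge h+t\phi$. This is the natural move given the standing hypothesis, and it makes the iterative/slab-decomposition machinery you allude to unnecessary.

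The unacknowledged gap is in your ordering step. You derive $\mathcal{L}^-[w]\le F[u]-F[\underline v]<0$ and then invoke Theorem~\ref{SMP}, but that theorem requires $w\ge 0$ in $\Omega$ as a hypothesis; it is a weak Harnack inequality for nonnegative supersolutions, not a minimum principle for a non-proper operator. The inequality $\mathcal{L}^-[w]<0$ with $w=0$ on $\partial\Omega$ does \emph{not} by itself force $w\ge 0$: on $\{w<0\}$ one has $\mathcal{L}^-[w]=\mathcal{M}^-(D^2w)-\gamma|Dw|+\delta w$, whose zero-order term has the wrong sign, so an ABP-type argument only yields $\|w^-\|_\infty\le C_0\delta\|w^-\|_{L^n}$, which is vacuous unless $\delta$ or $|\{w<0\}|$ is small. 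The paper closes this by working with the \emph{proper} operator $F_\infty-\delta$ (via $\text{D}_{F_{\infty}}(F_{\infty})$ and the subtraction of an extra $\delta(\underline v-u)$), and, crucially, by first invoking the a priori bound $\|u\|_\infty\le C_1$ from Theorem~\ref{aprbd} to choose $M$ large enough that $F_\infty[\underline v-u]-\delta(\underline v-u)>0$ pointwise. Your strict margin of $+1$ over $\tilde h$ is not scaled to the size of the solutions and cannot absorb the $\delta u$ term; you need the a priori bound, which your proof never uses.
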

\begin{proof}
    From Theorem \ref{aprbd} which we just proved, there exists $C_1>0$ such that $||u|| < C_1$ for every $u \in \mathcal{S}_I$.
 Set $M = A_0 + \sup_{t\in I}||t\phi||_{\infty} + \delta C_1+1$. By Theorems \ref{W2P} and \ref{Positive_Existence}, there is  a non-positive solution $\underline{v} \in W^{2,p}(\Omega)$ of
    \begin{equation}
            F_{\infty}[\underline{v}] = M + h^+ \text{ in } \Omega, \qquad
            \underline{v} = 0 \text{ on } \partial \Omega.
    \end{equation}
     Since $M>0$ the strong maximum principle yields $v<0$. Since $F\ge F_\infty-A_0$, the definition of $M$  immediately implies that $\underline{v}$ is a subsolution of  \eqref{Main_Decomp} for any $t \in I$. Moreover, if $u$ solves \eqref{Main_Decomp} for some $t \in I$, by $\underline{v}\le0$, $\text{D}_{F_{\infty}}(F_{\infty})$,  and the choice of $M$ we get
    \begin{align*}
        F_{\infty}[\underline{v} - u] - \delta(\underline{v}-u) &\geq F_{\infty}[\underline{v}] - F_{\infty}[u] -\delta(\underline{v}-u) \\
        & \geq (M + h^+) - (F[u]+A_0)  + \delta u\\
        & \geq M + h^+ - t\phi - h -A_0 -\delta C_1 > 0
    \end{align*}
in $\Omega$, with $\underline{v} - u = 0$ on $\partial \Omega$. The operator $F_{\infty}-\delta$ is proper,  so by Remark \ref{properex} we have $\underline{v} - u \leq 0$ in $\Omega$, and the strong maximum principle (Remark \ref{remhopf})  gives $\underline{v} < u$.
\end{proof}

Lemma \ref{supersol_exist} and Lemma \ref{subsol_exist} imply that for $t\ge T_0$ the problem \eqref{Main_Decomp} has ordered sub and super-solutions. So Perron's method, or even a simple iteration, imply that for $t\ge T_0$ a solution exists. For the reader's convenience we recall that  it is enough to solve the hierarchy of Dirichlet problems
$$
F[u_{m+1}]-\delta u_{m+1} = t\phi+h - \delta u_m \text{ in } \Omega, \qquad
            u_{m+1} = 0 \text{ on } \partial \Omega.
$$
(which can be done by the results in \cite{dongkrylov} or \cite{crandall1999existence}, since $F-\delta$ is proper), observe that by the comparison principle $\underline{v}=u_0\le u_m\le u_{m+1} \le \overline{v}$, use Theorem \ref{W2P} to extract a convergent subsequence and pass to the limit as in Corollary \ref{Consistency_Cor}.

\begin{remark}\label{remsup} We  observe that if there exists a supersolution for $(\mathcal{P}_t)$, such a function is also a (strict) supersolution for $\mathcal{P}_s, \ s>t$.  \end{remark}

Next, we give the proof of the existence and a priori bound on the critical height $t^*(h)$.

\noindent {\it Proof of Theorem \ref{aprbdtstar}}. Without restricting the generality we can take $F=F_\infty$ and $A_0=0$ again.
Assume that for some $t<0$ there is a solution $u=u_t$ of \eqref{Main_Decomp}
$$
F[u]=t\phi+ h \le h^+
$$
As above, Theorem \ref{ABP} 3. implies
 $$\|u^-\|_{L^\infty(\Omega)}\le C_0\left( 1+\frac{1}{\lambda_1^-}\right) \|h^+\|_{p}
 $$

 Assume that for some $B>0$ to be chosen we have
\begin{equation}\label{tB}
 |t|\ge B \left( 1+\frac{1}{\lambda_1^-}\right) \|h\|_p
\end{equation}
 and set
 $$v = \frac{u}{|t|}.
 $$
 Then
 $$F[v] = -\phi + \frac{h}{|t|} = :\beta(x) \le \frac{h^+}{|t|} =:\hat\epsilon (x).
 $$
 Assuming $B>C_0$, we have  $\|\beta\|_p\le C_0 + 1/B\le 2C_0$, so by Theorem \ref{W2P}  $\|v\|_{C^1}\le N_0$.

Let $\psi\in W^{2,p}(\Omega)$, $\psi>0$, be the solution (given in Remark \ref{properex}) of
$$
F_0[\psi] = -\phi\;\text{ in }\; \Omega, \qquad \psi = 0 \;\text{ on } \; \partial \Omega.
$$
By Theorem \ref{SMP} and Remark \ref{unifdist} we have $\psi\ge c_0d$ in $\Omega$, in particular $$\sup_\Omega\psi \ge c_0>0.$$ Also,
$$
\mathcal{L}^+_0[\psi-v]\ge F_0[\psi]-F_0[v]\ge F_0[\psi]-F[v] -\delta |v|\ge -\delta|v| - \frac{h^+}{|t|}
$$
so by \eqref{pucciest} and \eqref{tB}
$$
\psi(x)-v(x) \le C_0\|v\|_\infty  +\frac{C_0}{B},
$$
for each $x\in \Omega$. Hence
$$
\|v\|_\infty\ge (C_0+1)^{-1}(\psi(x)-C_0/B)= c_1(c_0-C_0/B) \ge c_0c_1/2 =:m_0>0
$$
if we choose $x$ such that $\psi(x)=c_0$ and $B\ge 2C_0/c_0$.

On the other hand, setting $\epsilon = C_0\left( 1+\frac{1}{\lambda_1^-}\right) \hat\epsilon^+$ we have
$$
\|v^-\|_\infty\le C_0\left( 1+\frac{1}{\lambda_1^-}\right)\frac{\|h^+\|_p}{|t|} = \|\epsilon\|_p.
$$
Choosing $B\ge C_0 (\epsilon_0m_0)^{-1}$, where $\epsilon_0$ is the number from  Proposition \ref{estnegeig} with $\alpha_0=1$ and $N_0$ given above, we see all assumptions of that proposition are satisfied, so
$$
\lambda_1^+\ge - C_0\|\epsilon\|_p = -C_0\left( 1+\frac{1}{\lambda_1^-}\right) \frac{\|h^+\|_p}{|t|},
$$
which writes
\begin{equation}\label{tB1}
|t|\le  \frac{C_0}{|\lambda_1^+|}\left( 1+\frac{1}{\lambda_1^-}\right) \|h^+\|_p.
\end{equation}
Thus either \eqref{tB1} or the negation of \eqref{tB} holds, or in other words,
problem \eqref{Main_Decomp} does {\it not} have solutions for
$$
t<-t_0\|h\|_p:= -C_0\left( 1-\frac{1}{\lambda_1^+}\right) \left( 1+\frac{1}{\lambda_1^-}\right) \|h\|_p.
$$
Since we already know by Lemma \ref{supersol_exist} and Lemma \ref{subsol_exist} that solutions exist for $
t>T_0\|h^-\|_p$, and that existence for $(\mathcal{P}_t)$ implies existence for $\mathcal{P}_s, \ s>t$ (Remark \ref{remsup}), the proof of Theorem~\ref{aprbdtstar}
is complete. \hfill $\Box$

For later use we record a simple consequence for the case $h=0$.
\begin{lemma}\label{heq0} Assume $F$ satisfies H0, H1, H2 and C (so Theorem \ref{ABP} applies to $F$). If $\lambda_1^+(F,\Omega) < 0 < \lambda_1^-(F,\Omega)$,  then $t^*(0)=0$.
\end{lemma}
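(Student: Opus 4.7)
The plan is twofold: first observe that the upper bound $t^*(0)\le 0$ is immediate, then establish $t^*(0)\ge 0$ by ruling out the existence of solutions at negative heights.

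For the upper bound, by H0 we have $F(0,0,0,x)=0$, so $u\equiv 0$ solves $F[u]=0\cdot\phi$ and hence $0\in\mathcal{S}_0$, giving $t^*(0)\le 0$. (Equivalently, this is what Theorem \ref{aprbdtstar} yields with $h=0$: under H0+H1+H2+C the operator $F$ automatically satisfies (SC) with $A_0=0$ and $F_\infty\equiv F$, so Theorem \ref{aprbdtstar} is available and would in fact give both inequalities at once. The direct argument below is a bit more illuminating.)

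For $t^*(0)\ge 0$, I would argue by contradiction. Suppose $u$ solves $F[u]=t\phi$, $u|_{\partial\Omega}=0$, for some $t<0$. Since $F[u]=t\phi\le 0$ in $\Omega$ and $\lambda_1^->0$, Theorem \ref{ABP} (part~3) forces $u\ge 0$ in $\Omega$. Because $F[u]\not\equiv 0$, $u$ is nontrivial, and the strong maximum / Hopf-Oleinik principle recorded in Remark \ref{remhopf} then gives $u>0$ in $\Omega$ with a positive inward normal derivative on $\partial\Omega$. The eigenfunction $\phi$ has the same behaviour, so by Remark \ref{unifdist} both $u$ and $\phi$ are comparable to $d(x)=\mathrm{dist}(x,\partial\Omega)$, and the quotient $\phi/u$ extends continuously to a strictly positive function on $\overline{\Omega}$; in particular $c:=\inf_{\Omega}\phi/u>0$.

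Then $\phi\ge cu$ in $\Omega$, and since $t<0$ this yields $F[u]=t\phi\le tc\,u$, i.e.\ $F[u]+(-tc)u\le 0$ with $u>0$ in $\Omega$. The definition \eqref{eigvs} of $\lambda_1^+$ immediately forces $\lambda_1^+(F,\Omega)\ge -tc>0$, contradicting $\lambda_1^+<0$. Hence no solution exists for $t<0$, so $t^*(0)\ge 0$ and the proof is complete. The only mildly delicate point is the strict positivity of $c$, which hinges on the uniform Hopf-type bound $c_0 d\le w\le C_0 d$ of Remark \ref{unifdist} being applicable to both $\phi$ and $u$; all remaining steps are straightforward bookkeeping.
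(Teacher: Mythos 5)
Your proof is correct, and it parallels the paper's direct argument almost entirely: the paper also observes that $0\in\mathcal{S}_0$ gives $t^*(0)\le 0$, and that for $t<0$ a solution $u$ would satisfy $u\ge 0$ by Theorem~\ref{ABP}~3 and then $u>0$ by Hopf. The only divergence is the final contradiction. The paper cites Theorem~\ref{ABP}~2 (simplicity): since $u>0$ and $F[u]=t\phi<0\le -\lambda_1^+u$, $u$ must be a multiple of $\varphi_1^+$, forcing $F[u]=-\lambda_1^+u>0$, a contradiction. You instead establish the boundary comparison $\phi\ge cu$ and feed $F[u]\le -(-tc)u$ into the definition \eqref{eigvs} to conclude $\lambda_1^+\ge -tc>0$. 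Both are valid, but your detour through Remark~\ref{unifdist} is actually superfluous: once $u>0$ in $\Omega$, the inequality $F[u]=t\phi<0=0\cdot u$ already exhibits a positive strict supersolution for $\lambda=0$, so $\lambda_1^+\ge 0$ by \eqref{eigvs}, which is already a contradiction with $\lambda_1^+<0$ without any comparison of $\phi$ and $u$. Your observation that the result follows directly from Theorem~\ref{aprbdtstar} with $F_\infty\equiv F$ and $A_0=0$ is also exactly the paper's first-stated proof.
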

This follows immediately from Theorem \ref{aprbdtstar} (since now $F=F_\infty$ and $A_0=0$) but can also be seen by observing that $(\mathcal{P}_t)$ has (the zero) solution for $t=0$ and if there were a solution for $t<0$ it would be strictly positive by Theorem \ref{ABP} 3. and the Hopf lemma, but that would contradict Theorem \ref{ABP} 2. since $\lambda_1^+<0$. \medskip

The rest of this section is devoted to the proof of Theorem \ref{Main_Result}. The first lemma defines the number $\beta>0$, and shows that the solution set is {\it ordered}, which will be fundamental for the remainder of the text.

\begin{lemma}
\label{ordered_sols}
     There exists some $\beta = \beta(n,\lambda,\Lambda,\gamma,\delta,\Omega)>0$ such that, if $-\beta< \lambda_1^+(F_{\infty})$, then the solution set $\mathcal{S}_t$ is strictly ordered: if $u, v \in \mathcal{S}_t$, then either $u < v$ or $u > v$ or $u \equiv v$.
\end{lemma}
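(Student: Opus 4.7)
The plan is to argue by contradiction, exploiting the HJB/convex structure of $F$ to produce a linear elliptic operator $L$, and combining the Berestycki-type monotonicity of its first eigenvalue (Theorem~\ref{Monotone_Eigenvalue}) with the ABP estimate to force a contradiction on a carefully chosen subdomain. Suppose $u,v\in\mathcal{S}_t$ are solutions that are not comparable, so the open sets $E^+:=\{u>v\}$ and $E^-:=\{v>u\}$ are both non-empty. Since $|\Omega\setminus E^+|+|\Omega\setminus E^-|=|\Omega|+|\{u=v\}|\ge|\Omega|$, at least one of these complements has measure at least $|\Omega|/2$; after swapping $u$ and $v$ if necessary we may assume $|\Omega\setminus E^+|\ge a:=|\Omega|/2$.

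The first step is to linearize $F$ at $u$. By convexity and H1, $F$ admits a sup-of-linear representation $F(M,p,u,x)=\sup_\alpha\{\mathrm{tr}(A^\alpha(x)M)+b^\alpha(x)\cdot p+c^\alpha(x)u\}$ with uniform ellipticity and Lipschitz bounds, and at a.e.\ $x\in\Omega$ a measurable selection $\alpha^*(x)$ achieves $F[u](x)=L_{\alpha^*(x)}[u](x)$ (immediate for HJB $F$ in the form \eqref{eq:HJB_def}; for general convex $F$ satisfying (SC) this is the standard measurable selection from the pointwise subdifferential). Define $L[\psi](x):=L_{\alpha^*(x)}[\psi](x)$, a linear uniformly elliptic operator with bounded measurable coefficients. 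Then $L[u]=F[u]$ a.e.\ and $L[\psi]\le F[\psi]$ pointwise for every $\psi$; in particular $L[v]\le F[v]=F[u]=L[u]$, so $w:=u-v$ satisfies
$$
L[w]\ge 0\quad\text{in }\Omega,\qquad w=0\quad\text{on }\partial\Omega,
$$
in the a.e.\ ($L^p$-viscosity) sense.

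Next we control $\lambda_1(L,\Omega)$ from below. Any positive supersolution $\psi>0$ of $F_\infty+\lambda\cdot=0$ satisfies $L[\psi]\le F[\psi]\le F_\infty[\psi]\le -\lambda\psi$ (using $F\le F_\infty$ from (SC)), hence is also a positive supersolution of $L+\lambda\cdot=0$; taking the supremum over admissible $\lambda$ yields $\lambda_1(L,\Omega)\ge\lambda_1^+(F_\infty,\Omega)>-\beta$ for the $\beta$ to be chosen. Now Theorem~\ref{Monotone_Eigenvalue} applied to $L$ with $\Gamma:=\overline{\Omega\setminus E^+}$ (of measure $\ge a$) produces $\beta_1=\beta_1(n,\lambda,\Lambda,\gamma,\delta,\Omega,a)>0$ with
$$
\lambda_1(L,E^+)>\lambda_1(L,\Omega)+\beta_1>-\beta+\beta_1.
$$
Setting $\beta:=\beta_1$ (which depends only on $n,\lambda,\Lambda,\gamma,\delta,\Omega$) gives $\lambda_1(L,E^+)>0$.

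Finally, since $\partial E^+\cap\Omega\subset\{u=v\}$ and $u=v=0$ on $\partial\Omega$, we have $w=0$ on $\partial E^+$; the ABP estimate of Theorem~\ref{ABP}~4 applied to the linear operator $L$ on $E^+$ then yields $\sup_{E^+}w\le 0$, contradicting $w>0$ in $E^+$. Hence $u$ and $v$ must be comparable; the strict dichotomy $u<v$, $u>v$ or $u\equiv v$ follows from the Hopf--Oleinik strong maximum principle (Remark~\ref{remhopf}) applied to $u-v$, which satisfies $\mathcal{L}^-[u-v]\le 0$ by H1. The main obstacle we anticipate is that the linearization $L$ has only bounded measurable coefficients, while Theorems~\ref{Monotone_Eigenvalue} and~\ref{ABP}~4 are stated under H2; this is standard within the Berestycki--Nirenberg--Varadhan/Quaas--Sirakov framework of Section~3 but requires either a direct appeal to the measurable-coefficient spectral and ABP theory, or an approximation argument via continuous-coefficient operators combined with the consistency results (Theorems~\ref{Consistency},~\ref{Compactness} and Corollary~\ref{Consistency_Cor}).
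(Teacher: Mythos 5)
Your approach diverges from the paper's at a crucial point and this divergence introduces a real gap. The paper does not linearize. It uses the structure condition $\text{D}_{F_{\infty}}(F)$ (part of (SC)) directly: since $F[u]-F[v]=0$, one gets
$F_{\infty}[\pm(u-v)] \geq \pm(F[u]-F[v]) = 0$ in $\Omega$,
and then applies Theorem~\ref{Monotone_Eigenvalue} and Theorem~\ref{ABP}~4 to the \emph{nonlinear} operator $F_{\infty}$, which by hypothesis satisfies H0, H1, H2 and C. No measurable selection and no linearization is needed.

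By contrast, you construct a linear operator $L$ from a measurable selection and then apply Theorem~\ref{Monotone_Eigenvalue} to $L$. This is where your argument breaks: the uniform eigenvalue monotonicity of Theorem~\ref{Monotone_Eigenvalue} (Berestycki--Nirenberg--Varadhan, \S9, and its HJB adaptation in \cite{siam2010}) is proved under H2, i.e.\ continuity of the leading coefficients. Your $L$ has only bounded measurable leading coefficients $A^{\alpha^*(x)}(x)$, which in general is not continuous, so H2($L$) fails. You flag this yourself as an ``obstacle'' and suggest it is standard or could be handled by approximation, but this is not a safe waving of hands: passing to continuous-coefficient approximations of $L$ one must simultaneously control the eigenvalue on the subdomain $E^+$ \emph{and} the differential inequality for $w$, and none of the consistency results you invoke (Theorems~\ref{Consistency}, \ref{Compactness}, Corollary~\ref{Consistency_Cor}) deliver the quantitative eigenvalue gap $\beta_1$ uniformly along such an approximation without further work. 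There is also a secondary issue: for a general convex $F$ satisfying (SC), the existence of a \emph{measurable} selection attaining the supremum a.e.\ is not automatic and is not part of the paper's hypotheses.

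The frustrating part is that your own estimates contain the fix. You observe $L[\psi]\leq F[\psi]\leq F_{\infty}[\psi]$ for all $\psi$, so $L[w]\geq 0$ immediately yields $F_{\infty}[w]\geq 0$ --- but then you should simply drop $L$ and apply Theorems~\ref{Monotone_Eigenvalue} and~\ref{ABP}~4 to $F_{\infty}$, which satisfies H2 by the standing assumption (SC). That is precisely the paper's argument. The remaining pieces of your proof --- the measure decomposition giving $|\Omega\setminus E^+|\geq|\Omega|/2$, the ABP step on the nodal set, and the closing Hopf--Oleinik dichotomy via H1 and Remark~\ref{remhopf} --- all match the paper and are correct.
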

\begin{proof}
   Let $u, v$ be distinct solutions of \eqref{Main_Decomp}. From $\text{D}_{F_{\infty}}(F)$, we have
    \begin{equation}
    \label{Sign_Change_Dirichlet}
            F_{\infty}[\pm(u-v)] \geq \pm(F[u]-F[v]) = 0 \text{ in } \Omega \qquad
            \pm(u-v) = 0 \text{ on } \partial\Omega.
    \end{equation}
    \par Suppose, by contradiction, that $u-v$ changes sign inside $\Omega$. If this is the case, one of the sets $\Omega^+=\{x \in \Omega : u-v > 0\}$, $\Omega^-=\{x \in \Omega : v-u > 0\}$ has positive measure smaller than $|\Omega |/2$. Without loss of generality, assume  the former. Take $\beta < \beta_1$, where $\beta_1$ is the number  from Theorem \ref{Monotone_Eigenvalue} applied with $\Gamma = \Omega \setminus \Omega^+$. We now have $\lambda_1^+(F_{\infty},\Omega^+) > \lambda_1^+(F_{\infty},\Omega) + \beta_1 > 0$. Hence we can apply Theorem \ref{ABP} 4. to the inequality in \eqref{Sign_Change_Dirichlet} in the domain $\Omega^+$,  getting
    \begin{equation}
        \sup_{\Omega^+} (u-v) \leq C\sup_{\partial \Omega^+} (u-v) = 0,
    \end{equation}
    which contradicts the definition of $\Omega^+$. This shows that either $u \leq v$ or $u \geq v$. Applying Remark \ref{remhopf} and H1($F_{\infty}$) to equation \eqref{Sign_Change_Dirichlet} in either case yields the desired strict inequalities and concludes the proof.
\end{proof}

{\it
    In what follows, we always assume $$-\beta< \lambda_1^+(F_{\infty},\Omega)<0<\lambda_1^-(F_{\infty},\Omega).$$
}

    We consolidate some of the previous results in the following corollary.
\begin{corollary}
    \label{Ordered_Bdd}
    For every $t \in \R$, the set $\mathcal{S}_t$ is (strictly) ordered and bounded in $W^{2,p}(\Omega)$. In addition, the functions \begin{equation}x \mapsto \inf_{u\in \mathcal{S}_t} u(x), \ \ x \mapsto \sup_{u\in \mathcal{S}_t} u(x) \end{equation}
    are elements of $\mathcal{S}_t$.
\end{corollary}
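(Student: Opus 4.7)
The strict ordering assertion is an immediate application of Lemma \ref{ordered_sols} to any two distinct elements of $\mathcal{S}_t$. For the $W^{2,p}$-boundedness, the plan is to apply Theorem \ref{aprbd} with right-hand side $g = h + t\phi \in L^p(\Omega)$, obtaining a uniform $L^\infty$-bound on every $u \in \mathcal{S}_t$ that depends only on $t$, $\|h\|_p$, the structural constants, and $\lambda_1^\pm$. Theorem \ref{W2P} then upgrades this to a uniform bound in $W^{2,p}(\Omega)$.

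For the claim that the pointwise infimum of $\mathcal{S}_t$ belongs to $\mathcal{S}_t$, I would fix $x_0 \in \Omega$ and set $m := \inf_{u \in \mathcal{S}_t} u(x_0)$, which is finite by the $L^\infty$-bound just obtained. Choose a sequence $u_n \in \mathcal{S}_t$ with $u_n(x_0) \to m$. The crucial observation is that, since $\mathcal{S}_t$ is a chain in the pointwise order (Lemma \ref{ordered_sols}), the pointwise minimum of finitely many of its elements coincides with one of them. Hence $\tilde u_n := \min\{u_1, \dots, u_n\}$ still lies in $\mathcal{S}_t$ and forms a pointwise nonincreasing sequence with $\tilde u_n(x_0) \to m$.

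Now I invoke Corollary \ref{Consistency_Cor}: the $W^{2,p}$-bound on $\mathcal{S}_t$ makes $(\tilde u_n)$ a bounded sequence of solutions with constant right-hand side $h + t\phi$, so along a subsequence it converges in $C^1(\overline{\Omega})$ to a limit $\underline{u}$ solving $(\mathcal{P}_t)$; by monotonicity the full sequence converges, and $\underline{u}(x_0) = m$. To identify $\underline{u}$ as the pointwise infimum over $\mathcal{S}_t$, take any $v \in \mathcal{S}_t$: Lemma \ref{ordered_sols} renders $v$ and $\underline{u}$ comparable, and from $v(x_0) \geq m = \underline{u}(x_0)$ the only consistent option is $v \geq \underline{u}$ throughout $\Omega$. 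The supremum statement is obtained by the symmetric argument, extracting a pointwise nondecreasing chain converging to $\overline{u}$ with $\overline{u}(x_0) = \sup_{u \in \mathcal{S}_t} u(x_0)$.

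I do not foresee a substantive obstacle: all the heavy lifting is done by the a priori bound of Theorem \ref{aprbd}, the total ordering of Lemma \ref{ordered_sols}, and the compactness-plus-consistency of Corollary \ref{Consistency_Cor}; the only subtlety is the construction of a monotone approximating sequence inside $\mathcal{S}_t$ itself, which is an essentially trivial feature of chains and makes the extraction of pointwise limits automatic.
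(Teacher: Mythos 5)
Your proof is correct and takes essentially the same route as the paper: a monotone sequence in the chain $\mathcal{S}_t$ built so its values at a fixed $x_0$ converge to the extremal value, compactness plus consistency (Corollary \ref{Consistency_Cor}) to pass to a $C^1$ limit in $\mathcal{S}_t$, and the chain structure to identify the limit. Your identification step is in fact slightly cleaner than the paper's: you note directly that since every $v\in\mathcal{S}_t$ is comparable to $\underline{u}$ and satisfies $v(x_0)\geq\underline{u}(x_0)=m$, the strict total ordering forces $v\geq\underline{u}$ throughout $\Omega$; the paper instead argues by contradiction with a second sequence at a second point $x_1$ and an appeal to the strong maximum principle, which is more machinery than needed once Lemma \ref{ordered_sols} is in hand.
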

\begin{proof}
    The first statement follows directly from Lemma \ref{ordered_sols} and Theorems \ref{aprbd}-\ref{W2P}. The second statement follows from a standard Perron-type argument, which we recall here. We deal only with $\overline{u} :  x \mapsto \sup_{u\in \mathcal{S}_t} u(x)$, since the other case is symmetrical. First, notice that, by definition of the supremum, there exists, for some fixed point $x_0 \in \Omega$, a sequence $u_{n,x_0}\in \mathcal{S}_t$ such that $u_{n,x_0}(x_0) \nearrow \overline{u}(x_0)$. By Lemma \ref{ordered_sols}, the sequence is monotonous over $\Omega$, and so $u_{n,x_0} \nearrow u$ pointwise in $\Omega$, for some function $u$. Since $\mathcal{S}_t$ is bounded in $W^{2,p}(\Omega)$, it is precompact in $C^1(\overline{\Omega})$, and so by uniqueness of the pointwise limit, this convergence is actually $C^1$  and the function $u\in \mathcal{S}_t$. Assume for contradiction that for some $x_1\in \Omega$ we have $u(x_1)<\overline{u}(x_1)$. Then in the same way we can choose an increasing sequence $\tilde{u}_n\in\mathcal{S}_t$ such that $\tilde{u}_n(x_1) \to \overline{u}(x_1)$ and $\tilde{u}_n$ converges in $C^1$ to a solution $\tilde{u}$ such that $\tilde{u}(x_1)= \overline{u}(x_1)>u(x_1)$. Since for large $n$ we have $\tilde{u}_n(x_1)>u_n(x_1)$ and the solution set is ordered, we have $\tilde{u}_n\ge u_n$. Hence $\tilde{u}\ge u$, so $\tilde{u}(x_0)=u(x_0)$ since $u(x_0)=\overline{u}(x_0)$. We obtained two ordered solutions who touch, so by the strong maximum principle we get $\tilde u = u$, a contradiction.
\end{proof}

We now deduce a pivotal classification result. It says that the existence of three different solutions implies the existence of infinitely many (actually, a segment of) solutions.
\begin{lemma}
\label{Continuum_Solutions}
 If \eqref{Main_Decomp} has at least 3 solutions, then $\mathcal{S}_t$ is a line segment in $W^{2,p}(\Omega)$.
\end{lemma}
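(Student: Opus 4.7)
\smallskip

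The plan is to use the extremal solutions together with the convexity structure from Lemma \ref{convex_subsuper}. Let $\underline{u}, \overline{u} \in \mathcal{S}_t$ be the minimal and maximal elements, which exist by Corollary \ref{Ordered_Bdd}, and pick a third solution $u_2 \in \mathcal{S}_t$ with $\underline{u} < u_2 < \overline{u}$ (strict in $\Omega$ by Lemma \ref{ordered_sols}). Consider the family of affine interpolants
\[
    w_\alpha = \underline{u} + \alpha(\overline{u} - \underline{u}), \qquad \alpha \in [0,1],
\]
which are supersolutions of \eqref{Main_Decomp} by Lemma \ref{convex_subsuper}. The goal is to find some $\alpha^* \in (0,1)$ for which $w_{\alpha^*} = u_2$; once this is achieved, Lemma \ref{convex_subsuper} immediately promotes the entire segment $\{w_\alpha : \alpha \in [0,1]\}$ to solutions of \eqref{Main_Decomp}.

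To locate $\alpha^*$, I set
\[
    \alpha^* = \inf\{\alpha \in [0,1] : w_\alpha \geq u_2 \text{ in } \Omega\}.
\]
Since $w_0 = \underline{u} < u_2$ at some interior point and $w_1 = \overline{u} > u_2$ in $\Omega$, continuity gives $\alpha^* \in (0,1)$ with $w_{\alpha^*} \geq u_2$ in $\Omega$. I next claim there must be an \emph{interior} touching point $x^* \in \Omega$ with $w_{\alpha^*}(x^*) = u_2(x^*)$. If not, then $w_{\alpha^*} - u_2 > 0$ in $\Omega$ while vanishing on $\partial\Omega$; since $F[w_{\alpha^*}] \leq F[u_2]$, Remark \ref{remhopf} (i.e.\ the Hopf boundary inequality applied to $d := w_{\alpha^*} - u_2$) yields $d(x)\ge c_0\,\mathrm{dist}(x,\partial\Omega)$ for some $c_0>0$, and since $\overline{u} - \underline{u} \leq C_0\,\mathrm{dist}(\cdot,\partial\Omega)$ by Remark \ref{unifdist}, we obtain $w_{\alpha^*-\varepsilon} \geq u_2$ for all small $\varepsilon>0$, contradicting the definition of $\alpha^*$.

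With the interior touching point in hand, I apply the strong maximum principle to $d = w_{\alpha^*} - u_2 \geq 0$. Since $F[w_{\alpha^*}] \leq F[u_2]$ and H1($F$) gives $F[w_{\alpha^*}]-F[u_2]\ge \mathcal{L}^-(D^2 d, Dd, d)$, the function $d$ is a nonnegative supersolution of a Pucci-type operator of the form in Theorem \ref{SMP}; by Remark \ref{remhopf}, $d(x^*)=0$ at an interior point forces $d\equiv 0$. Hence $u_2 = w_{\alpha^*} = \underline{u}+\alpha^*(\overline{u}-\underline{u})$, and Lemma \ref{convex_subsuper} promotes the whole family $\{w_\alpha\}_{\alpha\in[0,1]}$ to a segment contained in $\mathcal{S}_t$.

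Finally, I verify the reverse inclusion: any $v \in \mathcal{S}_t$ satisfies $\underline{u} \leq v \leq \overline{u}$ by extremality, and if $v$ is neither endpoint the strict inequality $\underline{u}<v<\overline{u}$ in $\Omega$ holds by Lemma \ref{ordered_sols}, so the same touching-point argument with $v$ in place of $u_2$ produces some $\alpha_v \in (0,1)$ with $v = \underline{u}+\alpha_v(\overline{u}-\underline{u})$. Thus $\mathcal{S}_t$ coincides with the line segment from $\underline{u}$ to $\overline{u}$. The main delicate step I expect is Step 3 (ensuring an \emph{interior} touching point), since the common vanishing of $w_{\alpha^*}$ and $u_2$ on $\partial\Omega$ forces one to combine the Hopf-type bound $d/\mathrm{dist}(\cdot,\partial\Omega) \gtrsim 1$ with the $C^1$-bound on $\overline{u}-\underline{u}$ in a quantitatively compatible way; every other step is a direct invocation of results already proved.
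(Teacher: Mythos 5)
Your proof is correct and follows essentially the same route as the paper: convexity makes the affine interpolants $w_\alpha$ supersolutions, one slides $\alpha$ to a critical value where the interpolant touches the third solution, the Hopf/strong maximum principle forces equality there, and Lemma \ref{convex_subsuper} then promotes the whole segment to solutions. The only cosmetic differences are that the paper slides downward from $\overline{u}$ via $\alpha^* = \sup\{\alpha : u_\alpha > \Tilde{u}\}$ while you slide upward from $\underline{u}$ via an infimum (which a priori only gives $\alpha^*\in(0,1]$, though your Hopf step excludes $\alpha^*=1$ anyway), and you are somewhat more explicit about the final step of showing every $v\in\mathcal{S}_t$ lies on the segment.
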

\begin{proof}
Suppose we have three distinct solutions of \eqref{Main_Decomp}. By Lemma \ref{ordered_sols}, they must be ordered, and so we write them as $\underline{u} < \Tilde{u} < \overline{u}$. By Corollary \ref{Ordered_Bdd}, we may assume $\underline{u}:= x \mapsto \inf_{u\in \mathcal{S}_t} u(x)$ and $\overline{u}:= x \mapsto \sup_{u\in \mathcal{S}_t} u(x)$.

  We claim that these three solutions are colinear (i.e. $\Tilde{u}-\underline{u} = k (\overline{u}-\underline{u})$ for some $k \in (0,1)$). Once this is proved,  the convexity of $F$ (via Corollary \ref{convex_subsuper})  immediately implies that the line segment connecting $\underline{u}$ and $\overline{u}$ is contained in $\mathcal{S}_t$, and is therefore equal to the entire set $\mathcal{S}_t$ by our assumption on $\underline{u}$ and $\overline{u}$.

   To prove the claim, write $u_{\alpha} = \overline{u} - \alpha(\overline{u} - \underline{u})$. By convexity, $F[u_{\alpha}] \leq g$ for any $\alpha \in [0,1]$. Notice that, although $u_\alpha$ is only a supersolution, it (along all other involved functions) nonetheless is an element of $W^{2,p}(\Omega)$ by construction.
 Let $\alpha^* = \sup\{ \alpha \in [0,1] : u_{\alpha} > \Tilde{u} \text{ in } \Omega \}$. Since $\underline{u} < \Tilde{u} < \overline{u}$ with strict inequality of the normal derivatives on the boundary (Remark~\ref{remhopf}), $\alpha^*$ is well-defined and $\alpha^*\in(0,1)$. By   continuity and $H1(F)$, we have
\begin{equation}
\mathcal{L}^+[\Tilde{u}-u_{\alpha^*}] \geq F[\Tilde{u}] - F[u_{\alpha^*}] \geq 0 \text{ in } \Omega, \qquad
        \Tilde{u}-u_{\alpha^*} \leq 0 \text{ in } \overline{\Omega}.
\end{equation}
So again either $\Tilde{u} = u_{\alpha^*}$ or $\Tilde{u} < u_{\alpha^*}$ with strict inequality of the normal derivatives on the boundary. The second alternative implies that $\Tilde{u} < u_{\alpha^*+\epsilon}$ for some $\epsilon>0$, which contradicts the definition of $\alpha^*$. Hence $\underline{u}$, $u_{\alpha^*}$, $\overline{u}$ are three different colinear solutions, and the proof is complete.
\end{proof}

Next, that {\it at least} two solutions of \eqref{Main_Decomp} exist when $t>t^*(h)$ follows from a classical degree argument which goes back to works of Amann (\cite{amann1972number},\cite{amann1976fixed}, see \cite{bandle2004solutions} or \cite{chang2005methods} for a general treatment of existence results based on degree theory) and was used in \cite{sirakovnon} too. Since the proof in \cite{sirakovnon} was not worded in a way that lends itself to easy comprehension by a reader of our text, and since we will need the construction later,  we give a (shortened but complete) proof here.

\begin{lemma}
\label{multiplicity}
    Assume there exist $\underline{v}, \overline{v} \in W^{2,p}({\Omega})$ satisfying, for $t > t^*$,
    \begin{equation}\label{cond1}
        \begin{cases}
         F[\overline{v}] \lneqq h + t\phi \lneqq F[\underline{v}], \qquad     \underline{v} < \overline{v} \text{ in } \Omega,\\
            \underline{v}=\overline{v}=0 \text{ on } \partial \Omega, \qquad
            \frac{\partial \underline{v}}{\partial \nu} < \frac{\partial \overline{v}}{\partial \nu} \text{ on } \partial\Omega.
        \end{cases}
    \end{equation}
 Then if we define
    \begin{equation}
        \mathcal{O} := \left\{ v \in C^1(\overline{\Omega}) : \underline{v} < v < \overline{v} \text{ in } \Omega, \ \frac{\partial \underline{v}}{\partial \nu} < \frac{\partial v}{\partial \nu} < \frac{\partial \overline{v}}{\partial \nu} \text{ on } \partial \Omega\right\},
    \end{equation}
    the sets $\mathcal{S}_t \cap \mathcal{O}$ and $\mathcal{S}_t \setminus \mathcal{O}$ are both nonempty, while $\mathcal{S}_t \cap \partial \mathcal{O} = \varnothing$.
\end{lemma}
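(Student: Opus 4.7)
The plan is to prove the three claims separately, with the third relying on Leray--Schauder degree theory applied to a compact fixed-point operator whose fixed points are exactly the solutions of $(\mathcal{P}_t)$.

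\textbf{Setup of the fixed-point operator.} Since $F - \delta \mathrm{id}$ is proper under H1($F$), for each $f \in L^p(\Omega)$ the Dirichlet problem $F[w] - \delta w = f$ in $\Omega$, $w = 0$ on $\partial\Omega$ has a unique $W^{2,p}$-solution (Remark \ref{properex}). Writing $(\mathcal{P}_t)$ as $F[u] - \delta u = h + t\phi - \delta u$, define $T = T_t : C^1(\overline{\Omega}) \to C^1(\overline{\Omega})$ by $T(u) = w$ where $F[w] - \delta w = h + t\phi - \delta u$. By Theorem \ref{W2P} and Sobolev embedding, $T$ is compact, and its fixed points are exactly the elements of $\mathcal{S}_t$. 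All degrees below are Leray--Schauder degrees of $I - T$.

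\textbf{A solution inside $\mathcal{O}$ and nothing on $\partial\mathcal{O}$.} Starting the monotone iteration $u_{m+1} = T u_m$ at $u_0 = \underline{v}$, the strict sub-solution property of $\underline{v}$ together with the comparison principle for the proper operator $F - \delta$ yields $\underline{v} \le u_m \le u_{m+1} \le \overline{v}$; passing to the limit via Corollary \ref{Consistency_Cor} produces a solution $u_\ast \in \mathcal{S}_t$ with $\underline{v} \le u_\ast \le \overline{v}$. To upgrade this to $u_\ast \in \mathcal{O}$, as well as to rule out $\mathcal{S}_t \cap \partial\mathcal{O}$, I apply the strong maximum principle of Remark \ref{remhopf} to $w := \overline{v} - u$ for any solution $u \le \overline{v}$: from $\mathrm{D}_{F_\infty}(F)$ and H1 we get $\mathcal{L}^+[w] \ge F[\overline{v}] - F[u] \le 0$ is not the right sign, so I work instead with $F[\overline{v}] - F[u] \lneqq 0$ giving $\mathcal{L}^-[\overline{v} - u] \le F[\overline{v}] - F[u] \lneqq 0$, whence $\overline{v} - u > 0$ in $\Omega$ with strict inward-normal inequality on $\partial\Omega$ by the Hopf lemma. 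Symmetrically for $\underline{v}$. Hence no element of $\mathcal{S}_t$ touches $\partial\mathcal{O}$, and $u_\ast \in \mathcal{O}$.

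\textbf{Degree computation on $\mathcal{O}$.} The classical sub--supersolution degree lemma (Amann) gives $\deg(I - T_t, \mathcal{O}, 0) = 1$. One way is to homotope the right-hand side $h + t\phi$ to $F[u_\ast]$ along $h + t\phi + s(F[u_\ast] - h - t\phi)$, $s \in [0,1]$: along the homotopy $\underline{v}, \overline{v}$ remain strict sub/super solutions (since the RHS stays in $[F[\overline{v}], F[\underline{v}]]$ pointwise at the endpoints and the strict inequalities are preserved by a short homotopy argument combined with the Hopf lemma as above), so no solution meets $\partial\mathcal{O}$ during the homotopy; at $s = 1$ the unique solution in $\mathcal{O}$ is $u_\ast$ and a linearization plus the positivity $\lambda_1^+(F_\infty, \Omega^+) > 0$ from Lemma \ref{ordered_sols} (via Theorem \ref{Monotone_Eigenvalue}) shows the index at $u_\ast$ is $+1$.

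\textbf{Degree on a large ball equals zero, and conclusion.} Choose $R > 0$ so large that the ball $\mathcal{B}_R \subset C^1(\overline{\Omega})$ contains $\mathcal{O}$ and contains all possible solutions of $F[u] = h + s\phi$ for $s$ in any bounded interval via the a priori bound \eqref{generest}. Pick $s_0 < t^*(h)$ so that $(\mathcal{P}_{s_0})$ has no solutions (Theorem \ref{aprbdtstar}); enlarging $R$ if needed so the a priori bound for $s \in [s_0, t]$ stays inside $\mathcal{B}_R$, homotopy invariance gives
\begin{equation*}
\deg(I - T_t, \mathcal{B}_R, 0) = \deg(I - T_{s_0}, \mathcal{B}_R, 0) = 0,
\end{equation*}
the last equality because $T_{s_0}$ has no fixed points at all. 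By additivity of degree and $\mathcal{S}_t \cap \partial\mathcal{O} = \varnothing$,
\begin{equation*}
\deg(I - T_t, \mathcal{B}_R \setminus \overline{\mathcal{O}}, 0) = 0 - 1 = -1 \neq 0,
\end{equation*}
producing a fixed point of $T_t$ outside $\overline{\mathcal{O}}$, hence a solution in $\mathcal{S}_t \setminus \mathcal{O}$.

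\textbf{Main obstacle.} The delicate point is the degree computation on $\mathcal{O}$: one must verify that the strict sub/super-solution inequalities are preserved under a suitable homotopy and that no fixed point escapes through $\partial\mathcal{O}$ along the way, which is exactly where the Hopf-lemma step combined with H1($F$) and $\mathrm{D}_{F_\infty}(F)$ does the work. Getting uniform a priori bounds so that a single ball $\mathcal{B}_R$ serves the second homotopy (from $t$ down to $s_0$) is the other place where care is needed, and this is precisely where Theorem \ref{aprbd} is indispensable.
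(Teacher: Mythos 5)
Your overall strategy matches the paper's — build a compact fixed-point map, show $\mathcal{S}_t\cap\partial\mathcal{O}=\varnothing$, compute degree $1$ over a neighborhood of $\mathcal{O}$, degree $0$ over a large ball by sliding $t$ down past $t^*$ where there are no solutions, then use excision — and your first, second and fourth steps are essentially correct (your choice of $T_t=(F-\delta)^{-1}[h+t\phi-\delta\,\cdot\,]$ is a legitimate variant of the paper's $K_t$, and the comparison/Hopf argument ruling out $\partial\mathcal{O}$ is right). The gap is in the third step, the computation of $\deg(I-T_t,\mathcal{O},0)=1$.

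You cite ``Amann's sub--supersolution degree lemma'' as a black box, but the crux of that lemma is precisely the compact invariance $T_t(\overline{\mathcal{O}})\subset\mathcal{O}$, and you never verify it. This is the technical heart of the argument in the paper: one checks, using H1($F$), the ordering $v\le\overline v$, and the comparison principle for the proper auxiliary operator, that the image of any $v\in\overline{\mathcal{O}}$ lands strictly between $\underline v$ and $\overline v$ with strict boundary-derivative inequalities, hence in $\mathcal{O}$. Once that is in hand, $\mathcal{O}$ is convex, so a \emph{constant} homotopy $sK_t(\cdot)+(1-s)w$ to a point $w\in\mathcal{O}$ never leaves $\mathcal{O}$, and the degree is that of a constant map, namely $1$. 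No differentiability of $F$ is needed.

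Your alternative route — homotoping the right-hand side to $F[u_*]$ and then ``linearizing'' at $u_*$ — does not go through. Two problems: first, you assert that $u_*$ is the \emph{unique} solution of $F[u]=F[u_*]$ in $\mathcal{O}$ at the end of the homotopy, which is unproven (convexity of $F$ permits whole segments of solutions, cf.\ Lemma \ref{Continuum_Solutions}); second, $F$ is merely Lipschitz and convex, not $C^1$, so a Leray--Schauder index computed via Fr\'echet linearization of the fixed-point map is not available, and the reference to $\lambda_1^+(F_\infty,\Omega^+)>0$ from the proof of Lemma \ref{ordered_sols} is not an index formula — that eigenvalue there concerns the domain where two solutions are ordered, not a spectral datum of a derivative of $T_t$. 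A subsidiary issue: $\mathcal{O}$ is open but unbounded in $C^1(\overline\Omega)$ (only $C^0$ bounds and boundary derivatives are constrained), so the degree must be taken over $\mathcal{O}\cap\mathcal{B}_{R}$ for a suitably large ball; the invariance argument is what supplies the needed a priori $C^1$ bound to make this work. Replacing your step with the invariance-plus-constant-homotopy argument closes the gap.
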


The point of the construction is to find an open set in $C^1(\overline{\Omega})$ that separates the set of solutions of \eqref{Main_Decomp} into distinct components. That is to say, a set containing some (but not all) solutions of \eqref{Main_Decomp} in its interior, and without solutions on its boundary.

Before proving the lemma, we use it to verify that, when $t>t^*(h)$, the scenario described by Lemma \ref{Continuum_Solutions} leads to an immediate contradiction, so statement 2 in Theorem \ref{Main_Result} holds.

\begin{proof}[Proof of item 2 in Theorem \ref{Main_Result}] Fix $t>t^*$. Take $\overline{v}$ to be a solution of $(\mathcal{P}_{s})$ for some $t^* < s < t$ given by Theorem \ref{aprbdtstar} (so $\overline{v}$ is a strict supersolution of $(\mathcal{P}_{t})$)  and let $\underline{v}$ be the  strict subsolution given by Lemma \ref{subsol_exist} with $I = [s,t]$ (so $\underline{v}\le \overline{v}$). Then, Remark \ref{remhopf} applied to $\underline{v}$ and $\overline{v}$, implies this pair satisfies \eqref{cond1}, and by   Lemma \ref{multiplicity} there are at least two ordered solutions of \eqref{Main_Decomp}, one in $\mathcal{O}$ and one outside that set. Assume for contradiction that there exists a third solution. Then Lemma \ref{Continuum_Solutions} implies $\mathcal{S}_t$ is a line segment. Since this segment contains elements both of $\mathcal{O}$ and its complement, by continuity we obtain a solution on $\partial \mathcal{O}$, which contradicts the last statement of Lemma \ref{multiplicity}.
\end{proof}

\begin{proof}[Proof of Lemma \ref{multiplicity}]
 Notice that $\mathcal{O}$ is an open set in $C^1(\overline{\Omega})$. By the definition of $\mathcal{O}$ and the method of sub- and super-solutions (recalled above) we know that there exists a solution of \eqref{Main_Decomp} in $\overline{\mathcal{O}}$.
 Let us prove that there can be no solutions on $\partial \mathcal{O}$. Take any $u \in \mathcal{S}_t \cap \overline{\mathcal{O}}$. By $H1(F)$ we get
\begin{equation}
\begin{cases}
        \mathcal{L}^+[u-\overline{v}] \geq F[u] - F[\overline{v}] \geq 0 \text{ in } \Omega, \qquad
        u-\overline{v}\leq 0 \text{ in } \Omega\\
        \mathcal{L}^+[\underline{v}-u] \geq F[\underline{v}] - F[u] \geq 0 \text{ in } \Omega, \qquad
        \underline{v}-u \leq 0 \text{ in } \Omega,
        \end{cases}
\end{equation}
so that applying Remark \ref{remhopf} to both differences implies either $u \in \mathcal{O}$, or $u = \overline{v}$, or $u = \underline{v}$. The last two options are impossible due to our assumptions on $\underline{v}$ and $\overline{v}$ (specifically, that they are strict sub and supersolutions). Therefore, $u \in \mathcal{O}$; in particular $\mathcal{S}_t \cap \partial\mathcal{O}= \varnothing$.

 To find a solution outside $\mathcal{O}$, given any $v \in C^1(\overline{\Omega})$, we consider the auxiliary operator \begin{equation}F_v(M,p,u,x):= F(M,p,v(x),x)-\delta u,\end{equation}and the Dirichlet problem
\begin{equation}
\label{Aux_Dirichlet}
         F_v[u]:=F(D^2u, Du, v(x), x) - \delta u= g - \delta v \text{ in } \Omega, \qquad
        u = 0 \text{ on } \partial \Omega.
\end{equation}
As it is easy to check, H1($F_v$) and C($F_v$) hold for any fixed $v$, $F_v$ is proper, and $(F_v)_\infty(M,p,u,x) = F_\infty(M,p,0,x)- \delta u$. Hence there exists a unique solution $u$ of \eqref{Aux_Dirichlet} which belongs to $W^{2,p}(\Omega)$ (see for instance \cite{dongkrylov} and Theorem \ref{W2P}) and is such that
\begin{equation}\label{c1abd1}
\|u\|_{C^{1,\alpha}(\Omega)}\le C\left(n,p, \lambda,\Lambda, \gamma, \delta, \Omega, \|v\|_{L^\infty(\Omega)}\right)(\|g\|_{L^\infty(\Omega)} + \|v\|_{L^\infty(\Omega)}),
\end{equation}
where $\alpha= 1-n/p$.  Therefore, the operator $K_t$ that associates to $v$ the solution of \eqref{Aux_Dirichlet} for some fixed $h\in L^p(\Omega)$ and $g = h+t\phi$, i.e. \begin{equation}K_t(v) = F_v^{-1}[h+t\phi-\delta v],\end{equation} is well-defined and maps compactly $C^1_0(\overline{\Omega})$ to itself.
 By construction, solutions of \eqref{Main_Decomp} are exactly the fixed points of $K_t$.

 We claim that  $K_t(\overline{\mathcal{O}}) \subset \mathcal{O}$. Indeed, take $v \in \overline{\mathcal{O}}$ and set $u = K_t(v)$. Recalling the definition of $\mathcal{O}$ and \eqref{Aux_Dirichlet}, by H1($F$) and $v\le\overline{v}$ we get
\begin{align*}
    F(D^2u,Du,\overline{v},x) & =  F(D^2u,Du,\overline{v},x) +\delta \overline{v} -\delta \overline{v} \\
    &\geq F(D^2u,Du,v,x) + \delta v -\delta \overline{v} \\
    & = h+t\phi +\delta u -\delta \overline{v} \\
    & \gneqq F(D^2\overline{v},D\overline{v},\overline{v},x)+ \delta( u - \overline{v}),
\end{align*}
  that is, $F_{\overline{v}}[u]\gneqq F_{\overline{v}}[\overline{v}]$. Since $F_{\overline{v}}$ satisfies the comparison principle (see Corollary \ref{Comparison_Corollary}), $u \leq \overline{v}$. An analogous argument with $\underline{v}$ yields $u \geq \underline{v}$, and
   the Hopf-Oleinik strong maximum principle gives $u \in \mathcal{O}$, in the same way as before.

 Set $R_0= \max\{||\overline{v}||_{\infty},||\underline{v}||_{\infty}\}$ and let $\bar R$ be an upper bound for the right-hand side in \eqref{c1abd1} provided $\|v\|_\infty\le R_0$. In particular,  $K_t(\overline{\mathcal{O}})\subset \mathcal{B}_{\bar R}$. Set $R_1 = \max\{\bar R, \|\underline{v}\|_{C^1}, \|\overline{v}\|_{C^1}\}+1$. By this choice and the above claim  $K_t(\partial (\mathcal{O}\cap \mathcal{B}_{R_1}))\subset \mathcal{O}\cap \mathcal{B}_{R_1-1}$, hence $K_t$ has no fixed points on $\partial (\mathcal{O}\cap \mathcal{B}_{R_1})$.

Also, fix $t_0 = t^*(h)-1$ and take $\bar C $ to be an upper bound for the  $C^1$--norms of the solutions of $(\mathcal{P}_{s})$, for all $s\in [t_0,t]$ (by Theorem \ref{aprbd} and Theorem \ref{W2P} such a constant $\bar C$ exists). Define $R_2 = \max\{ \bar C+1, R_1+1\}$.

We claim that
\begin{equation}
    \text{deg}(I-K_t,\mathcal{O}\cap \mathcal{B}_{R_1},0) \neq \text{deg}(I-K_t,\mathcal{B}_{R_2},0).
\end{equation}

 Let us compute these degrees. First, fix $w \in \mathcal{O}\cap \mathcal{B}_{R_1-1}$ and consider the compact homotopy $A(s,v) = sK_t(v) + (1-s)w$. Note that, by construction, $\mathcal{O}$ is convex, and so, by  what we just proved, $A([0,1], \overline{\mathcal{O}\cap \mathcal{B}_{R_1}}) \subset \mathcal{O}\cap \mathcal{B}_{R_1}$, and there are no solutions of $v - A(s,v) = 0$ on $[0,1] \times \partial (\mathcal{O}\cap \mathcal{B}_{R_1})$. Therefore,
\begin{equation*}
    \text{deg}(I-K_t,\mathcal{O}\cap \mathcal{B}_{R_1},0) = \text{deg}(I-A(1,\cdot), \mathcal{O}\cap \mathcal{B}_{R_1},0) = \text{deg}(I-A(0,\cdot), \mathcal{O}\cap \mathcal{B}_{R_1},0) =  1,
\end{equation*}
since $A(0,\cdot)=w$ is a constant map and $w \in \mathcal{O}\cap \mathcal{B}_{R_1}$. This also gives an alternative proof of the fact we recalled above, that there is a solution in $\mathcal{O}\cap \mathcal{B}_{R_1}$.

Next, the map $B(s,v) = K_s(v)$ for $s \in [t_0,t]$ is clearly a compact homotopy connecting $K_{t_0}$ and $K_t$. Moreover, there are no solutions of $v - B(s,v) = 0$ on $\partial \mathcal{B}_{R_2}$, since such a fixed point would be an element of $\mathcal{S}_s$ with $C^1$ norm violating the bound $\bar C$. Then
\begin{equation*}
    \text{deg}(I-K_t,\mathcal{B}_{R_2},0) = \text{deg}(I-K_{t_0},\mathcal{B}_{R_2},0) = 0,
\end{equation*}
since $(\mathcal{P}_{t_0})$ has no solutions, and so $K_{t_0}$ has no fixed points. By the excision property of the degree,
\begin{equation*}
    \text{deg}(I-K_t,\mathcal{B}_{R_2}\setminus(\mathcal{O}\cap \mathcal{B}_{R_1}),0) = -1,
\end{equation*}
hence there exists a fixed point of $K_t$, and therefore a solution of $(\mathcal{P}_t)$, in $\mathcal{B}_{R_2}\setminus(\mathcal{O}\cap \mathcal{B}_{R_1})$. To conclude the proof of the lemma, it remains only to show that $\mathcal{S}_t \cap( \mathcal{B}_{R_2}\setminus(\mathcal{O}\cap \mathcal{B}_{R_1})) \subset \mathcal{S}_t \setminus \mathcal{O}$. If this were not the case, we would have a solution $u$ of $(\mathcal{P}_t)$ in $\mathcal{O} \setminus \mathcal{B}_{R_1}$, and therefore $u = K_t(u) \in K_t(\mathcal{O}) \subset \mathcal{B}_{R_1}$, which is a contradiction.
\end{proof}


Thus, we already know that $\mathcal{S}_{t}=\varnothing$ for $t<t^*=t^*(h)$, and that $\mathcal{S}_{t}=\{\underline{u}_t, \overline{u}_t\}$ with $\underline{u}_t< \overline{u}_t$ for $t>t^*$, where $t^*$ is a number that satisfies the bounds in Theorem \ref{aprbdtstar}.  Before moving on to the analysis of the case $t=t^*$, we  establish some properties of the two branches of $\mathcal{S}_{(t^*,\infty)}$, which will be used in that analysis and are also part of Theorem \ref{Main_Result}. We begin by the lower branch.

\begin{lemma}
\label{lower_curve_cont}
The map $t \mapsto \underline{u}_t$ is continuous from $(t^*,\infty)$ to $C^1(\overline{\Omega})$ and pointwise strictly decreasing, in the sense that $s<t$ implies $\underline{u}_s(x)> \underline{u}_t(x)$ for all  $x\in\Omega$.
\end{lemma}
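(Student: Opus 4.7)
The plan is to establish strict monotonicity first via a Perron-type iteration, then obtain $C^1$-continuity by exploiting the topological-degree construction of Lemma~\ref{multiplicity} together with the $W^{2,p}$-compactness already at hand. The two branches $\underline{u}_t<\overline{u}_t$ need to be separated in $C^1(\overline{\Omega})$ in a way that survives passage to the limit in $t$, and the natural separating object is the open set $\mathcal{O}$ built in Lemma~\ref{multiplicity}.

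For strict monotonicity, I would fix $s<t$ in $(t^*,\infty)$ and observe that $\underline{u}_s$ is a strict supersolution of $(\mathcal{P}_t)$ because $F[\underline{u}_s]=h+s\phi\lneqq h+t\phi$. Starting the Perron iteration recalled just before Remark~\ref{remsup} from the subsolution $\underline{v}$ of Lemma~\ref{subsol_exist} for a compact interval $I$ containing $s$ and $t$, the comparison principle for the proper operator $F-\delta$ (Corollary~\ref{Comparison_Corollary} combined with Remark~\ref{properex}) keeps every iterate below $\underline{u}_s$, so the nondecreasing limit is a solution of $(\mathcal{P}_t)$ sitting in $[\underline{v},\underline{u}_s]$, and by minimality this limit is $\underline{u}_t$; hence $\underline{u}_t\le\underline{u}_s$. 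To upgrade to strict inequality with Hopf, I would apply H1($F$) to $w=\underline{u}_s-\underline{u}_t\ge 0$, giving
\begin{equation*}
\mathcal{M}^-(D^2w)-\gamma|Dw|-\delta w\le F[\underline{u}_s]-F[\underline{u}_t]=(s-t)\phi\le 0\ \text{in }\Omega,\qquad w=0\ \text{on }\partial\Omega,
\end{equation*}
and then invoke the second estimate in \eqref{smpres} of Theorem~\ref{SMP} with $g=(s-t)\phi$ to conclude $\inf_\Omega w/d\ge c_0(t-s)\|\phi\|_{p_0}>0$, which yields both $\underline{u}_s>\underline{u}_t$ in $\Omega$ and $\partial_\nu\underline{u}_t<\partial_\nu\underline{u}_s$ on $\partial\Omega$.

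For continuity at an arbitrary $t_0\in(t^*,\infty)$, I would pick $s\in(t^*,t_0)$ and a compact interval $I\subset(s,\infty)$ containing a neighborhood of $t_0$, and form the open set $\mathcal{O}\subset C^1(\overline{\Omega})$ of Lemma~\ref{multiplicity} using the pair $\underline{v}$ (from Lemma~\ref{subsol_exist} adapted to $I$) and $\overline{v}=\underline{u}_s$, which is a strict supersolution of $(\mathcal{P}_t)$ for every $t\in I$. The strict monotonicity with Hopf just proved places $\underline{u}_t$ inside $\mathcal{O}$ for each $t\in I$; since $|\mathcal{S}_t|=2$ by item~2 of Theorem~\ref{Main_Result}, the nonempty decomposition $\mathcal{S}_t\cap\mathcal{O}$, $\mathcal{S}_t\setminus\mathcal{O}$ together with $\mathcal{S}_t\cap\partial\mathcal{O}=\varnothing$ from Lemma~\ref{multiplicity} forces $\overline{u}_t$ into the open complement $C^1(\overline{\Omega})\setminus\overline{\mathcal{O}}$, and characterizes $\underline{u}_t$ as the unique element of $\mathcal{S}_t\cap\mathcal{O}$. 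Given $t_n\to t_0$ in $I$, Theorems~\ref{aprbd} and \ref{W2P} bound $\{\underline{u}_{t_n}\}$ in $W^{2,p}(\Omega)$, so it is precompact in $C^1(\overline{\Omega})$, and Corollary~\ref{Consistency_Cor} forces every $C^1$-subsequential limit $u^*$ to lie in $\mathcal{S}_{t_0}=\{\underline{u}_{t_0},\overline{u}_{t_0}\}$; since $u^*\in\overline{\mathcal{O}}$ while $\overline{u}_{t_0}\notin\overline{\mathcal{O}}$, one must have $u^*=\underline{u}_{t_0}$, whence the full sequence converges.

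The main obstacle is continuity from the left ($t_n\uparrow t_0$): from monotonicity alone one only gets $\liminf_n\underline{u}_{t_n}\ge\underline{u}_{t_0}$, and a priori the lower branch could jump upward and coincide with $\overline{u}_{t_0}$ at $t_0$. Precluding this is precisely what the separating set $\mathcal{O}$ accomplishes, and the crucial quantitative input making $\underline{u}_t$ sit strictly inside $\mathcal{O}$, rather than on $\partial\mathcal{O}$, is the Hopf-type lower bound $\underline{u}_s-\underline{u}_t\ge c_0(t-s)\|\phi\|_{p_0}\,d(x)$ delivered by Theorem~\ref{SMP}.
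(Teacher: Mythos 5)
Your proof is correct and follows essentially the same strategy as the paper's: establish monotonicity of the lower branch, then use the separating set $\mathcal{O}$ of Lemma~\ref{multiplicity} together with $W^{2,p}$-compactness to pass to the limit. The only substantive variation is in the monotonicity step. The paper instead argues that Perron's method produces a solution $\tilde u$ of $(\mathcal{P}_t)$ strictly between the subsolution $\underline{v}$ and the strict supersolution $\underline{u}_s$, and then uses $\mathcal{S}_t=\{\underline{u}_t,\overline{u}_t\}$ with the ordering of Lemma~\ref{ordered_sols} to conclude $\underline{u}_t\le\tilde u<\underline{u}_s$, appealing to the Hopf lemma (Remark~\ref{remhopf}) for strictness. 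You instead track the iteration explicitly, argue it stays below $\underline{u}_s$ by comparison for the proper operator $F-\delta$, and identify its limit with $\underline{u}_t$ as the minimal solution above $\underline{v}$; you then upgrade to strict inequality with normal-derivative separation via the quantitative strong maximum principle of Theorem~\ref{SMP} applied to $w=\underline{u}_s-\underline{u}_t$ with right-hand side $(s-t)\phi$, obtaining the explicit lower bound $w\ge c_0(t-s)\|\phi\|_{p_0}\,d$. Both routes are valid; yours is slightly more hands-on and gives a quantitative Hopf gap that makes the containment $\underline{u}_t\in\mathcal{O}$ entirely explicit, while the paper's is shorter. The continuity argument is essentially identical to the paper's, including the correct observation that left-continuity ($t_n\uparrow t_0$) is the nontrivial direction and is handled by the same separating set.
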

\begin{proof}
    Fix $s\in(t^*,t)$. Consider $\underline{v}_t$ a subsolution given by Lemma \ref{subsol_exist} for $I = [t^*,t]$. Since $\underline{u}_s$ is a solution of $(\mathcal{P}_{s})$, it is a strict supersolution of $(\mathcal{P}_t)$ and $\underline{v}_t<\underline{u}_s$. Hence there exists a solution $\Tilde{u}$ in $\mathcal{S}_t$, satisfying $\underline{v}_t < \Tilde{u} < \underline{u}_s$. Since solutions of $(\mathcal{P}_t)$ are ordered by Lemma~\ref{ordered_sols}, and $\mathcal{S}_t = \{\underline{u}_t, \overline{u}_t\}$, either $\underline{u}_t < \overline{u}_t = \Tilde{u} < \underline{u}_s$, or $\underline{u}_t = \Tilde{u} < \underline{u}_s$. In either case, the second statement of the lemma is proved.

    To prove continuity, let $ s_n \rightarrow t_0>t^*$, and fix $s \in(t^*, t_0)$. Set $\overline{v} = \underline{u}_s$ (a strict supersolution to $\mathcal{P}_{t}$ for all $t > s$)  and take $\underline{v}$ a strict subsolution given by Lemma \ref{subsol_exist} for $ I = [s,t_0+1]$. As before, this pair of sub- and super-solutions satisfies the hypotheses of Lemma \ref{multiplicity} for all $t \in (s,t_0+1]$. Define $\mathcal{O}$ as in that lemma for this pair, then Lemma \ref{multiplicity}  implies that $\mathcal{O} \cap \mathcal{S}_{t} \neq \varnothing$, $\partial \mathcal{O} \cap \mathcal{S}_{t} = \varnothing$ and $\mathcal{S}_t \setminus \overline{\mathcal{O}} \neq \varnothing$, for all $t \in (s,t_0+1]$. By the monotonicity we just proved, $\underline{u}_t < \overline{v}$, and by the definition of $\underline{v}$ in Lemma \ref{subsol_exist}, $\underline{u}_t > \underline{v}$, both for all $t \in (s,t_0+1]$. Hence $\underline{u}_t \in \mathcal{O}$. Since there are only two elements of $\mathcal{S}_{t}$, this means $\overline{u}_{t} \notin \mathcal{O}$.
    \par For sufficiently large $n$, $s_n \in (s,t_0+1]$, and so $\underline{u}_{s_n} \in \mathcal{O}$. By Theorems \ref{aprbd} and \ref{W2P} $||\underline{u}_{s_n}||_{W^{2,p}(\Omega)}$ is bounded, and so Corollary \ref{Consistency_Cor} implies that a subsequence of $\underline{u}_{s_n} $ converges to a function $u$ which must be in $\mathcal{S}_{t_0} \cap \overline{\mathcal{O}} = \mathcal{S}_{t_0} \cap \mathcal{O}$ (since there can be no solutions on $\partial \mathcal{O}$), a set which we have just seen is the singleton $\{ \underline{u}_{t_0}\}$. This is true for every subsequence of $\{s_n\}_{n\in \mathbb{N}}$, and so $\underline{u}_{s_n} \rightarrow \underline{u}_{t_0}$, proving continuity. Recall that the convergence given by Corollary \ref{Consistency_Cor} is in $C^1(\overline{\Omega})$.
\end{proof}
\begin{remark}
\label{lower_curve_decreasing_remark}
    Note that the argument in the first paragraph in the above proof (that shows the map $t \mapsto \underline{u}_t$ is pointwise strictly decreasing) would not change if we worked with $s=t^* $ and an arbitrary $u \in \mathcal{S}_{t^*}$ instead of $\underline{u}_s$.
\end{remark}
\par The proof of Lemma \ref{lower_curve_cont}  can be adapted to show  that the set of maximal solutions is  a continuous curve too.
\begin{lemma}
\label{upper_curve_cont}
    The map $t \mapsto \overline{u}_t$ is continuous from $(t^*,\infty)$ to $C^1(\overline{\Omega})$. 
\end{lemma}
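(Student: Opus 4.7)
The plan is to mirror the argument used for the lower branch in Lemma \ref{lower_curve_cont}, reusing the very same open set $\mathcal{O}$, but now exploiting the fact that $\overline{u}_t$ lies \emph{outside} $\mathcal{O}$ while $\underline{u}_{t_0}$ lies \emph{inside}. Fix $t_0 > t^*$ and a sequence $s_n \to t_0$; without loss of generality, $s_n \in (t^*, t_0+1)$ for all $n$. Choose $s \in (t^*, t_0)$ and, exactly as in the proof of Lemma \ref{lower_curve_cont}, set $\overline{v} = \underline{u}_s$ (a strict supersolution of $(\mathcal{P}_t)$ for every $t > s$) and let $\underline{v}$ be the strict subsolution produced by Lemma \ref{subsol_exist} on the interval $I = [s, t_0+1]$. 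Define $\mathcal{O}$ by this pair as in Lemma \ref{multiplicity}.

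For $n$ large enough, $s_n \in (s, t_0+1]$, so the analysis carried out in the second paragraph of the proof of Lemma \ref{lower_curve_cont} applies verbatim: $\underline{u}_{s_n} \in \mathcal{O}$, and since $\mathcal{S}_{s_n} = \{\underline{u}_{s_n}, \overline{u}_{s_n}\}$ while Lemma \ref{multiplicity} produces at least one solution outside $\mathcal{O}$, it follows that $\overline{u}_{s_n} \notin \mathcal{O}$. By Theorem \ref{aprbd} combined with Theorem \ref{W2P}, the family $\{\overline{u}_{s_n}\}$ is bounded in $W^{2,p}(\Omega)$. Hence by Corollary \ref{Consistency_Cor}, every subsequence of $\{\overline{u}_{s_n}\}$ admits a further subsequence that converges in $C^1(\overline{\Omega})$ to some $u \in \mathcal{S}_{t_0}$.

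To conclude, one needs to identify $u = \overline{u}_{t_0}$. Since $\mathcal{O}$ is open in $C^1(\overline{\Omega})$, its complement is closed, and $\overline{u}_{s_n} \notin \mathcal{O}$ gives $u \notin \mathcal{O}$. On the other hand, by the same argument applied to $\underline{u}_{t_0}$ (which was already executed in Lemma \ref{lower_curve_cont}), we have $\underline{u}_{t_0} \in \mathcal{O}$. Because $\mathcal{S}_{t_0} = \{\underline{u}_{t_0}, \overline{u}_{t_0}\}$, this forces $u = \overline{u}_{t_0}$. Since the limit of every convergent subsequence is the same, the full sequence $\overline{u}_{s_n}$ converges to $\overline{u}_{t_0}$ in $C^1(\overline{\Omega})$, which proves continuity.

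The only delicate point I anticipate is justifying the step that rules out $u = \underline{u}_{t_0}$: this relies crucially on $\mathcal{O}$ being open (so that the non-membership $\overline{u}_{s_n} \notin \mathcal{O}$ is preserved under $C^1$ limits) together with the strict inclusion $\underline{u}_{t_0} \in \mathcal{O}$ inherited from Lemma \ref{lower_curve_cont}. Everything else is a routine transcription of the compactness and identification scheme already deployed for the lower branch; no new estimate or new construction is required, since the a priori bounds of Theorem \ref{aprbd} apply uniformly to all solutions, whether minimal or maximal. Note that we do not (and cannot) invoke pointwise monotonicity of $t \mapsto \overline{u}_t$, which at this stage has not yet been established.
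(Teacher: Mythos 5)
Your proof is correct and follows essentially the same argument as the paper: reuse the set $\mathcal{O}$ from Lemma \ref{lower_curve_cont}, show $\overline{u}_{s_n}\notin\mathcal{O}$ via Lemma \ref{multiplicity}, extract a $C^1$-convergent subsequence by the a priori bounds, and identify the limit as $\overline{u}_{t_0}$ since $\underline{u}_{t_0}\in\mathcal{O}$ and $\mathcal{S}_{t_0}$ has exactly two elements. The only cosmetic difference is that you invoke closedness of $C^1(\overline{\Omega})\setminus\mathcal{O}$ directly to place the limit outside $\mathcal{O}$, whereas the paper phrases it through $\mathcal{S}_{t_0}\cap\partial\mathcal{O}=\varnothing$; both are valid and equivalent.
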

\begin{proof}
    Take $s_n \rightarrow t_0$ and $\mathcal{O}$ as in the proof of Lemma \ref{lower_curve_cont}. The argument there shows us that for large enough $n$, $\underline{u}_{s_n} \in \mathcal{O}$, and so, by Lemma \ref{multiplicity}, $\overline{u}_{s_n} \notin \mathcal{O}$. But by the same reasoning, $\overline{u}_{s_n} \rightarrow u \in \mathcal{S}_{t} \setminus \mathcal{O}$. Since there are only two elements of $\mathcal{S}_t$, and $\underline{u}_t \in \mathcal{O}$, this implies $\overline{u}_{s_n} \rightarrow \overline{u}_t$, again for every convergent subsequence, and so for the whole sequence  too.
\end{proof}

We do not know (and actually do not expect) that the upper curve $t \mapsto \overline{u}_t$ is also pointwise monotonous, as is the case for the lower curve, and strictly increasing.  Nonetheless, we are able to prove that the upper curve is strictly increasing on at least ``half of the domain".
\begin{lemma}
\label{upper_curve_nondecreasing}
    Let $t^*<s<t$. Then, the set $\Omega_{s,t}^+ = \{x\in\Omega\::\:\overline{u}_s(x) < \overline{u}_t(x)\}$ has measure greater than $|\Omega|/2$.
\end{lemma}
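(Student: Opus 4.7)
The plan is to set $w=\overline{u}_t-\overline{u}_s$, show first that $\Omega_{s,t}^+=\{w>0\}$ is non-empty, and then derive a contradiction from the assumption $|\Omega_{s,t}^+|\le|\Omega|/2$ by applying the ABP-type Theorem~\ref{ABP}~4.\ to $w$ on $\Omega_{s,t}^+$, after boosting $\lambda_1^+$ on this subdomain into the positive range via the strict domain monotonicity of the half-eigenvalue (Theorem~\ref{Monotone_Eigenvalue}). The $C^1(\overline{\Omega})$-regularity of both solutions makes $w$ continuous and $\Omega_{s,t}^+$ open, and property D$_{F_{\infty}}(F)$ yields the key inequality
\[
F_\infty[w]\;\ge\;F[\overline{u}_t]-F[\overline{u}_s]\;=\;(t-s)\phi\;>\;0\;\text{ in }\;\Omega,\qquad w=0\;\text{ on }\;\partial\Omega.
\]

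To prove $\Omega_{s,t}^+\neq\varnothing$, I would observe that $\overline{u}_s$ is a strict supersolution of $(\mathcal{P}_t)$ (since $F[\overline{u}_s]=h+s\phi\lneqq h+t\phi$) and take the strict subsolution $\underline{v}$ of $(\mathcal{P}_t)$ from Lemma~\ref{subsol_exist} with $I=[s,t]$, so that $\underline{v}<u$ for every solution $u$ of the Dirichlet problems with parameter in $I$, in particular $\underline{v}<\overline{u}_s$. The pair $(\underline{v},\overline{u}_s)$ satisfies the hypotheses of Lemma~\ref{multiplicity}: the needed Hopf inequality on $\partial\Omega$ follows from Remark~\ref{remhopf} applied to $\overline{u}_s-\underline{v}$, using $F[\overline{u}_s]<F[\underline{v}]$. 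Lemma~\ref{multiplicity} therefore produces a solution $\tilde u\in\mathcal{S}_t\setminus\overline{\mathcal{O}}$. The assumption $\tilde u\le\overline{u}_s$ in $\Omega$ would contradict this: together with $F[\overline{u}_s]<F[\tilde u]$ and Remark~\ref{remhopf} it would force $\overline{u}_s-\tilde u>0$ in $\Omega$ with strict Hopf inequality on $\partial\Omega$, and combined with the analogous inequalities for $\tilde u-\underline{v}$ this would place $\tilde u\in\mathcal{O}$, contradicting $\tilde u\notin\overline{\mathcal{O}}$. So $\tilde u(x_0)>\overline{u}_s(x_0)$ at some $x_0\in\Omega$, and the maximality of $\overline{u}_t$ together with the ordering of $\mathcal{S}_t$ gives $\overline{u}_t(x_0)\ge\tilde u(x_0)>\overline{u}_s(x_0)$.

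Now assume for contradiction $|\Omega_{s,t}^+|\le|\Omega|/2$, so $|\Omega\setminus\Omega_{s,t}^+|\ge|\Omega|/2$. Theorem~\ref{Monotone_Eigenvalue} applied with $\Gamma=\Omega\setminus\Omega_{s,t}^+$ (and any fixed threshold $a<|\Omega|/2$) gives $\beta_1>0$ with
\[
\lambda_1^+(F_\infty,\Omega_{s,t}^+)\;>\;\lambda_1^+(F_\infty,\Omega)+\beta_1\;>\;-\beta+\beta_1.
\]
By taking $\beta$ in \eqref{eq:lambdas_change_sign} to be the minimum of this $\beta_1$ and the constant already fixed in Lemma~\ref{ordered_sols} (legitimate since both arise from the same monotonicity theorem with thresholds bounded below by a fixed fraction of $|\Omega|$), we ensure $\lambda_1^+(F_\infty,\Omega_{s,t}^+)>0$. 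Since $w=0$ on $\partial\Omega_{s,t}^+$ (each boundary point is either in $\partial\Omega$ or a zero-level point of the continuous function $w$) and $((t-s)\phi)^-=0$, Theorem~\ref{ABP}~4.\ yields $\sup_{\Omega_{s,t}^+}w\le 0$, contradicting the positivity of $w$ on the non-empty open set $\Omega_{s,t}^+$. I expect the main obstacle to be precisely the non-emptiness step: without it, the ABP inequality on $\Omega_{s,t}^+$ is vacuous, and it cannot be obtained from a direct maximum-principle argument on all of $\Omega$ since we are exactly in the regime $\lambda_1^+(F_\infty,\Omega)<0$. Leveraging Lemma~\ref{multiplicity} with the particular choice $\overline{v}=\overline{u}_s$ to exploit the \emph{maximality} of $\overline{u}_t$ is what unlocks the argument; the remainder is a routine combination of the tools already established in the paper.
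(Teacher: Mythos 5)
Your proposal is correct and follows essentially the same route as the paper: in both, non-emptiness of $\Omega_{s,t}^+$ is obtained by applying Lemma~\ref{multiplicity} with $\overline v = \overline u_s$, and the measure bound then comes from Theorem~\ref{Monotone_Eigenvalue} combined with the one-sided ABP estimate (Theorem~\ref{ABP}~4.) on $\Omega_{s,t}^+$. The only (minor) variation is in the non-emptiness step: the paper assumes $\overline u_s \ge \overline u_t$ everywhere and reaches a contradiction with the already-proven exact two-solution count (item~2 of Theorem~\ref{Main_Result}), while you argue directly that the solution produced in $\mathcal{S}_t\setminus\overline{\mathcal{O}}$ must exceed $\overline u_s$ somewhere via the Hopf lemma — both are valid and rely on the same Lemma~\ref{multiplicity}.
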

\begin{proof}
    First, suppose for contradiction that $\overline{u}_s \geq \overline{u}_t$ in $\Omega$. Take $\overline{v} = \overline{u}_s$ and $\underline{v}$ a subsolution given by Lemma \ref{subsol_exist} for $I = [s,t]$. This pair again satisfies the conditions of Lemma \ref{multiplicity}, which we apply. Since $\overline{u}_s \geq \overline{u}_t\ge \underline{v}$, we have $\overline{u}_t \in \overline{\mathcal{O}}$. We also know that $\underline{u}_t\in \mathcal{O}$ which, since $\mathcal{S}_t \setminus \overline{\mathcal{O}} \neq \varnothing$, implies the existence of a third solution to \eqref{Main_Decomp}, a contradiction with the already proven item 2 in Theorem \ref{Main_Result}.

    Then we know (by the continuity of $\overline{u}_s$ and $\overline{u}_t$), that $|\Omega_{s,t}^+|>0$. Suppose, by contradiction, that $|\Omega_{s,t}^+| \le|\Omega|/2$. $\text{D}_{F_{\infty}}(F)$ then implies
    \begin{equation}
            F_{\infty}[\overline{u}_t-\overline{u}_s] \geq (t-s)\phi \geq  0 \text{ in } \Omega_{s,t}^+, \qquad
            \overline{u}_t-\overline{u}_s = 0 \text{ on } \partial \Omega_{s,t}^+.
    \end{equation}
    Now, by our restrictions on $d>0$ and $\lambda_1^+(F) \geq -d$, we know that $\lambda_1^+(F_{\infty},\Omega_{s,t}^+) > 0$. Applying Theorem \ref{ABP} 4., we get
    \begin{equation*}
        \sup_{\Omega_{s,t}^+}( \overline{u}_t-\overline{u}_s) \leq C \sup_{\partial \Omega_{s,t}^+}( \overline{u}_t-\overline{u}_s) = 0
    \end{equation*}
    which is a contradiction with the definition of $\Omega_{s,t}^+$, and  concludes the proof of the lemma.
\end{proof}
\begin{remark}
\label{upper_curve_nondec_remark}
    Note that Lemma \ref{upper_curve_nondecreasing} still holds if we take $s=t^*$ and substitute $\overline{u}_s$ by an arbitrary $u \in \mathcal{S}_{t^*}$.
\end{remark}

Next, we describe the behaviour of the branches as $t \rightarrow \infty$. To do this, we rely on the fact that $F$ and $F_{\infty}$ have, by construction, the same qualitative behaviour for large $u$. Although we do not have monotonicity for the upper curve, it turns out that the weaker statement of Lemma \ref{upper_curve_nondecreasing} is enough to establish its asymptotic behaviour.
\begin{lemma}
    For any compact set $K \subset \Omega$, $\sup_K \underline{u}_t \rightarrow -\infty$ and $\inf_K \overline{u}_t \rightarrow +\infty$ as $t \rightarrow \infty$. Also, for $t$ large enough, $\underline{u}_t<0$ and $\overline{u}_t>0$ in $\Omega$.
\end{lemma}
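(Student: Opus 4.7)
The plan is to rescale by $t$ and exploit the positive 1-homogeneity of $F_\infty$. Set $v_t := \underline{u}_t/t$ and $w_t := \overline{u}_t/t$. The a priori bound of Theorem \ref{aprbd} applied to $g = h + t\phi$, combined with Theorem \ref{W2P}, shows that both $\{v_t\}$ and $\{w_t\}$ are bounded in $W^{2,p}(\Omega)$ for $t \ge 1$, hence precompact in $C^1(\overline{\Omega})$. Since $F_\infty - A_0 \le F \le F_\infty$ and $F_\infty$ is positively 1-homogeneous, dividing $F[\underline{u}_t] = h + t\phi$ by $t$ produces
\begin{equation*}
    \phi + h/t \le F_\infty[v_t] \le \phi + (h+A_0)/t,
\end{equation*}
and identically for $w_t$. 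Theorem \ref{Consistency} then gives that any $C^1(\overline{\Omega})$ limit $v$ of a subsequence $v_{t_n}$ (resp.\ $w$ of $w_{t_n}$) solves $F_\infty[v] = \phi$ with $v|_{\partial \Omega} = 0$.

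Next I would classify these limits. Applying Theorem \ref{Main_Result} items 1--2 to $F_\infty$ itself (which satisfies (SC) with $A_0 = 0$) together with Lemma \ref{heq0} (which gives $t^*(0) = 0$), the equation $F_\infty[u] = \phi$ admits exactly two strictly ordered solutions $\underline{v}_\infty < \overline{v}_\infty$. Since $F_\infty[\phi] = -\lambda_1^+ \phi$ and $\lambda_1^+ < 0$, positive 1-homogeneity gives $F_\infty[\phi/|\lambda_1^+|] = \phi$, so $\overline{v}_\infty \ge \phi/|\lambda_1^+| > 0$ in $\Omega$. By Theorem \ref{Positive_Existence} (second part), a non-positive solution also exists; being non-trivial (as $F_\infty[0] = 0 \ne \phi$) it is strictly negative in $\Omega$ by the Hopf-Oleinik principle (Remark \ref{remhopf}), and by ordering it coincides with $\underline{v}_\infty$. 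Thus $v,w \in \{\underline{v}_\infty, \overline{v}_\infty\}$ with $\underline{v}_\infty < 0 < \overline{v}_\infty$ in $\Omega$.

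The crux is identifying $v = \underline{v}_\infty$ and $w = \overline{v}_\infty$. If $v$ were $\overline{v}_\infty$, then for each $x\in\Omega$, $\underline{u}_{t_n}(x) = t_n v_{t_n}(x) \to +\infty$, contradicting the pointwise decrease of $t\mapsto \underline{u}_t$ from Lemma \ref{lower_curve_cont}. As every convergent subsequence must then limit to $\underline{v}_\infty$, the whole family satisfies $v_t \to \underline{v}_\infty$ in $C^1(\overline{\Omega})$. To rule out $w = \underline{v}_\infty$, I would use Lemma \ref{upper_curve_nondecreasing}: fix $s_0 > t^*$, set $M_0 := \|\overline{u}_{s_0}\|_\infty$, and pick a compact $K \subset \Omega$ with $|K| > |\Omega|/2$. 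For $t > s_0$, the bound $|\Omega^+_{s_0, t}| > |\Omega|/2$ forces $K \cap \Omega^+_{s_0, t} \ne \varnothing$, and on this intersection $w_t > -M_0/t$. If a subsequence $w_{t_n} \to \underline{v}_\infty$, uniform convergence would give $w_{t_n} \le -c_K/2$ on $K$ for large $n$, with $c_K := \inf_K(-\underline{v}_\infty) > 0$, contradicting the preceding lower bound at any point of $K \cap \Omega^+_{s_0, t_n}$. Hence $w = \overline{v}_\infty$ and $w_t \to \overline{v}_\infty$ in $C^1(\overline{\Omega})$. This identification is the main obstacle, because the upper branch lacks the pointwise monotonicity available for $\underline{u}_t$, and the substitute must be extracted from the weaker measure-theoretic statement of Lemma \ref{upper_curve_nondecreasing}.

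The conclusions follow by undoing the scaling. Since $\underline{v}_\infty < 0$ strictly in $\Omega$, on any compact $K' \subset \Omega$ one has $\sup_{K'} \underline{u}_t = t\sup_{K'} v_t \to -\infty$ as $t\to\infty$, and symmetrically $\inf_{K'} \overline{u}_t \to +\infty$. For the interior sign statements, the quantitative Hopf bound (Remark \ref{unifdist} applied to $-\underline{v}_\infty$) gives $\underline{v}_\infty(x) \le -c_1\, d(x,\partial\Omega)$ for some $c_1 > 0$; since $v_t$ and $\underline{v}_\infty$ both vanish on $\partial\Omega$, $C^1$ convergence yields $|v_t(x) - \underline{v}_\infty(x)| \le \|\nabla(v_t - \underline{v}_\infty)\|_\infty\, d(x,\partial\Omega) = o(1)\,d(x,\partial\Omega)$, so $v_t(x) \le -(c_1/2)\,d(x,\partial\Omega) < 0$ in $\Omega$ for $t$ large. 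An identical argument for $\overline{u}_t$ closes the proof.
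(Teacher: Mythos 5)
Your proof is correct and follows essentially the same route as the paper: rescale $\underline{u}_t, \overline{u}_t$ by $t$, identify subsequential limits as solutions of $F_\infty[w]=\phi$, classify those as exactly two ordered solutions (one positive multiple of $\phi$, one strictly negative from Theorem~\ref{Positive_Existence}), use the pointwise monotonicity of the lower branch and the measure lower bound of Lemma~\ref{upper_curve_nondecreasing} to pin down which limit each branch converges to, and then undo the scaling. The only differences are cosmetic: your choice $|K|>|\Omega|/2$ plays the role of the paper's $|\tilde K|\ge\frac34|\Omega|$, and your barrier-type inequality $|v_t-\underline v_\infty|\lesssim\|\nabla(v_t-\underline v_\infty)\|_\infty\,d(\cdot,\partial\Omega)$ replaces the paper's appeal to openness in $C^1$ of the cone of functions that are strictly negative in $\Omega$ with strictly negative inward normal derivative.
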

\begin{proof}
    Denote by $u_t$ an element of $\mathcal{S}_t$ (either $\underline{u}_t$ or $\overline{u}_t$, since we are interested in large $t$). Set $w_t = u_t/t$. By $\text{D}_{F_{\infty}}(F)$ and U$(F)$, we have
    \begin{equation}
     F_{\infty}[w_t] -A_0/t =   \frac{F_{\infty}[u_t]-A_0}{t} \leq \frac{h+t\phi}{t} \leq \frac{F_{\infty}[u_t]}{t} = F_{\infty}[w_t].
    \end{equation}
    Hence $F_{\infty}[w_t] \rightarrow \phi$  in $L^p(\Omega)$. By Theorems \ref{aprbd} and \ref{W2P} the sequence $w_t$ is bounded in $W^{2,p}(\Omega)$ as $t\to\infty$.  Then, again by Theorem \ref{Consistency} and Theorem \ref{Compactness}, we get $w_t \rightarrow w$ up to a subsequence, in $C^1(\overline{\Omega})$, for $w$ satisfying
    \begin{equation}
    \label{Aux_Eigen?}
            F_{\infty}[w] = \phi \text{ in } \Omega, \qquad
            w = 0 \text{ on } \partial \Omega.
    \end{equation}

    Note that this is $(\mathcal{P}_{t})$ for $F=F_\infty$, $h=0$ and $t=1$. Therefore, by Lemma \ref{heq0} and Theorem \ref{Main_Result} 2.  there exist exactly two solutions of \eqref{Aux_Eigen?}. One of them is  $w^* := -\frac{1}{\lambda_1^+}\phi >0$, as we check immediately. By the second part of Theorem \ref{Positive_Existence} and the Hopf lemma there exists a strictly negative solution of \eqref{Aux_Eigen?} $w_*<0$ in $\Omega$ and $\frac{\partial w_*}{\partial\nu} < 0$ on $\partial \Omega$. Therefore, $w_t$ converges in $C^1$, up to subsequence, to $w_*<0$ or $w^*>0$ in $\Omega$.

     First, we deal with $\underline{u}_t$. For any fixed $t_0 > t^*$, we know $t>t_0$ implies $\underline{u}_t \leq \underline{u}_{t_0} \leq C$ for some constant depending on $t_0$ given by Theorem \ref{aprbd}. Dividing by $t$ and taking the limit, we get $\limsup_{t\rightarrow\infty}\underline{u}_t \leq 0$. This implies, by the above argument,
    \begin{equation*}
       \underline{w}_t =\frac{\underline{u}_t}{t} \rightarrow w_*
    \end{equation*}
    up to subsequence, in $C^1(\overline{\Omega})$. This is true for every subsequence, and therefore for the sequence itself. Since $w_* < 0$ in $\Omega$, we can, for any compact $K \subset \overline{\Omega}$, take $t$ large enough that
    \begin{equation*}
     \|\underline{w}_t-w_*\|_\infty\le  \frac{1}{2}\min_K |w_*|, \qquad\mbox{that is}, \qquad  \frac{\underline{u}_t}{t} \leq \frac{1}{2}w_* < 0
    \end{equation*}
    in $K$. Therefore, $\sup_K \underline{u}_t\leq \frac{1}{2}t\sup_K w_* \rightarrow -\infty$. Since the set of functions which are strictly negative in $\Omega$ and have a strictly negative normal derivative on the boundary is open in $C^1(\Omega)$ and $w_*$ belongs to that set, $\underline{w}_t$ belongs to it too for large $t$, so $\underline{u}_t<0$ in $\Omega$.

   Now, we move to the upper curve $t \mapsto \overline{u}_t$.  First, fix $t_0 > t^*$. From Lemma \ref{upper_curve_nondecreasing}, $t> t_0$ implies $|\Omega_{t_0,t}^+| \geq |\Omega|/2$. Take $\Tilde{K} \subset \Omega$ compact such that $|\Tilde{K}| \geq \frac{3}{4}|\Omega|$. This implies, for every $t>t_0$, $|\Tilde{K} \cap \Omega_{t_0,t}^+ | \geq |\Omega|/4 >0$.

     By the argument laid out above, we know that for any sequence $t_n\rightarrow \infty$, the sequence $\overline{u}_{t_n}$ subconverges to a solution of \eqref{Aux_Eigen?}, i.e. $w_*$ or $w^*$. Since the measure of $|\Tilde{K} \cap \Omega_{t_0,t}^+ |$ is always positive, we can always pick $x_n \in \Tilde{K}$ such that
    \begin{equation*}
        \frac{\overline{u}_{t_n}(x_n) - \overline{u}_{t_0}(x_n)}{t} > 0.
    \end{equation*}
    \par Then, passing to a further subsequence if necessary, $x_n \rightarrow x_{\infty} \in \Tilde{K} \subset \Omega$, and so
    \begin{equation}
        w(x_{\infty}) \geq \liminf_{n\rightarrow \infty} \frac{\overline{u}_{t_n}(x_n) - \overline{u}_{t_0}(x_n)}{t_n} \geq 0.
    \end{equation}
    \par Since $w_* < 0 $ in $\Omega$, we conclude that $w = w^*$. This is true for any subsequence, and so $\overline{u}_t/t \rightarrow w^*$ in $C^0(\overline{\Omega})$. By the same argument as in the lower curve's case, then, we get, for any compact $K \subset \Omega$ and large enough $t$, $\inf_K \overline{u}_t \geq \frac{1}{2}t\inf_K w^* \rightarrow \infty$ and $\overline{u}_t >0$ in $\Omega$.
\end{proof}

 We now move to the analysis of the case $t = t^*$.

 First, we note that by applying Theorems \ref{aprbd}-\ref{W2P}, Corollary \ref{Consistency_Cor} and Lemma~\ref{lower_curve_cont}, we get $\underline{u}_t \rightarrow \underline{u} := \sup_{t > t^*} \underline{u}_t$ in $C^1(\overline{\Omega})$ as $t \searrow t^*$. Here, the convergence obtained from Corollary \ref{Consistency_Cor} is true for every subsequence due to the monotonicity of $\underline{u}_t$ given by Lemma~\ref{lower_curve_cont}. We claim that $\underline{u}(x)= \inf_{u \in \mathcal{S}_{t^*}} u(x) =:u_*(x)$.

 To see this, recall that by Corollary \ref{Ordered_Bdd} the solution set $\mathcal{S}_{t^*}$ is strictly ordered and bounded, and $u_* \in \mathcal{S}_{t^*}$. We argue by contradiction: suppose $\underline{u} > u_*$. The $C^1(\overline{\Omega})$--convergence implies there exists some $t>t^*$ such that $\underline{u}_t > u_*$ at some point in $\Omega$. This contradicts the second statement of Lemma \ref{lower_curve_cont} (see also Remark \ref{lower_curve_decreasing_remark} after that lemma).

Now, we move to the upper curve. By Corollary \ref{Ordered_Bdd}, the set of solutions $\mathcal{S}_{t^*}$ is ordered and bounded, and if $u^* := x \mapsto  \sup_{u \in \mathcal{S}_{t^*}}u(x)$, $u^* \in \mathcal{S}_{t^*}$. We wish to verify that $u^*$ is the limit of the upper branch of solutions as $t \rightarrow t^*$. Again by Theorems \ref{aprbd}-\ref{W2P} and Corollary \ref{Consistency_Cor} any sequence $t_n\rightarrow t^*$ has a subsequence such that $\overline{u}_{t_n} \rightarrow u \in \mathcal{S}_{t^*}$ in $C^1(\overline{\Omega})$. By the definition of $u^*$, $u \leq u^*$. It suffices to discard the strict inequality. Suppose, by contradiction, that   $u < u^*$ at some point in $\Omega$. Then, Theorem \ref{SMP} implies $u<u^*$ in $\Omega$ and $\frac{\partial {u}}{\partial \nu} < \frac{\partial {u^*}}{\partial \nu}$ on $\partial\Omega$. By $C^1$--convergence, then, there is $n$ large enough so that $\overline{u}_{t_{n}} < u^*$ in $\Omega$. This contradicts  Lemma \ref{upper_curve_nondecreasing} (see Remark \ref{upper_curve_nondec_remark}), so $u = u^*$. Since this is true for every subsequence of $t_n$, we have proved that  $ \lim_{t\rightarrow t^*} \overline{u}_t = u^*$.

Now, we know that there are two ordered continuous branches of solutions of $(\mathcal{P}_t)$, $\underline{u}_t$ and $\overline{u}_t$ for $t>t^*$, stemming from some ``root'' solutions of $(\mathcal{P}_{t^*})$ $u_*$ and $u^*$, respectively the infimum and the supremum of $\mathcal{S}_{t^*}$. If $u_* = u^*$, we are done; however, if they are not, Lemma \ref{ordered_sols} implies they are ordered (and therefore $u_* < u^*$), but nothing more.  We need to somehow argue that the set $\mathcal{S}_{\R}$ is a connected curve. This will be achieved with the help of the next lemma, which provides  a continuity argument. 
\begin{lemma}
\label{notisolated}
    If $u_* \neq u^*$, $u_*$ is not an isolated solution of $(\mathcal{P}_{t^*})$ in the $C^1(\overline{\Omega})$--topology.
\end{lemma}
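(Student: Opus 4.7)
The plan is to argue by contradiction via Leray-Schauder degree theory, after a preliminary case split that reduces matters to $\mathcal{S}_{t^*}=\{u_*,u^*\}$. First I would dispose of the case $|\mathcal{S}_{t^*}|\geq 3$: by Lemma \ref{Continuum_Solutions}, $\mathcal{S}_{t^*}$ is then the line segment joining $u_*$ and $u^*$, so the points $u_*+\beta(u^*-u_*)$ for small $\beta>0$ are genuine solutions of $(\mathcal{P}_{t^*})$ distinct from $u_*$ and converging to $u_*$ in $C^1(\overline{\Omega})$, which contradicts isolation. Therefore I may assume $\mathcal{S}_{t^*}=\{u_*,u^*\}$.

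By Lemma \ref{convex_subsuper}, $u_\alpha:=u_*+\alpha(u^*-u_*)$ is a supersolution of $(\mathcal{P}_{t^*})$ for every $\alpha\in(0,1)$, and none of these is itself a solution (otherwise the whole segment would lie in $\mathcal{S}_{t^*}$ by the same lemma, returning us to the first case); consequently each $u_\alpha$ is a strict supersolution, $F[u_\alpha]\lneqq h+t^*\phi$. Paired with a strict subsolution $\underline{v}$ provided by Lemma \ref{subsol_exist}, Hopf-Oleinik (Remark \ref{remhopf}) shows that $u_*$ lies in the open set
\begin{equation*}
\mathcal{O}_\alpha:=\left\{v\in C^1(\overline{\Omega}): \underline{v}<v<u_\alpha \text{ in }\Omega,\ \tfrac{\partial \underline{v}}{\partial\nu}<\tfrac{\partial v}{\partial\nu}<\tfrac{\partial u_\alpha}{\partial\nu} \text{ on }\partial\Omega\right\},
\end{equation*}
while $u^*\notin\mathcal{O}_\alpha$ because $u_\alpha<u^*$ strictly.

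The core of the proof is a degree computation. Replicating the argument from the proof of Lemma \ref{multiplicity} at the critical level, I would verify $K_{t^*}(\overline{\mathcal{O}_\alpha\cap\mathcal{B}_{R_1}})\subset\mathcal{O}_\alpha\cap\mathcal{B}_{R_1-1}$ for $R_1$ large, and then a linear homotopy to a constant map inside $\mathcal{O}_\alpha\cap\mathcal{B}_{R_1}$ yields $\deg(I-K_{t^*},\mathcal{O}_\alpha\cap\mathcal{B}_{R_1},0)=1$. Since solutions of $(\mathcal{P}_{t^*})$ are totally ordered (Lemma \ref{ordered_sols}) and $u^*\notin\mathcal{O}_\alpha$, the only fixed point of $K_{t^*}$ in $\mathcal{O}_\alpha\cap\mathcal{B}_{R_1}$ is $u_*$. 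Under the isolation hypothesis, I can shrink the isolating ball around $u_*$ to an open set $U\subset\mathcal{O}_\alpha\cap\mathcal{B}_{R_1}$ with $U\cap\mathcal{S}_{t^*}=\{u_*\}$, and excision of degree gives $\deg(I-K_{t^*},U,0)=1$.

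The contradiction comes by homotoping through the regime $s<t^*$, where $\mathcal{S}_s=\varnothing$. Fix $t_0<t^*$ and consider the compact homotopy $s\mapsto K_s$ on $U$ for $s\in[t_0,t^*]$. No fixed points of $K_s$ lie on $\partial U$: for $s<t^*$ there are no fixed points at all, and for $s=t^*$ the only fixed point is $u_*$, which lies in the interior of $U$ by construction. Homotopy invariance of the degree then gives $\deg(I-K_{t^*},U,0)=\deg(I-K_{t_0},U,0)=0$, contradicting the value $1$ computed above. The main technical obstacle I anticipate is the careful extension of the $K_t$-invariance argument of Lemma \ref{multiplicity} (stated there only for $t>t^*$) to the critical level $t=t^*$; this is precisely the step in which the hypothesis $u_*\neq u^*$ is used, since it furnishes the strict supersolution $u_\alpha$ that makes $\mathcal{O}_\alpha$ an admissible domain for the degree computation.
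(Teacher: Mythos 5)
Your proof is correct, and it takes a recognizably different route from the paper while sharing the same degree-theoretic core. Here is the comparison.

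Both arguments hinge on the two degree computations: a degree-$1$ computation on a set of the form $\mathcal{O}\cap\mathcal{B}_{R}$ (via the invariance $K_{t^*}(\overline{\mathcal{O}})\subset\mathcal{O}$ and a linear homotopy to a constant map), and a degree-$0$ computation on a small ball around $u_*$ (via the homotopy $s\mapsto K_s$ through $s<t^*$, where $\mathcal{S}_s=\varnothing$). The difference is in how the set $\mathcal{O}$ is built and how the contradiction is extracted. The paper does not perform your preliminary case split: instead it assumes isolation in a $C^1$-ball $\mathcal{B}_{2r}(u_*)$, takes both $\overline{v}=u_\varepsilon$ \emph{and} $\underline{v}=u_{-\varepsilon}$ (with $\varepsilon$ small enough that both lie in $\mathcal{B}_r(u_*)$) as the strict super- and subsolutions; their strictness is a consequence of the isolation hypothesis. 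Then the excision on the annulus $\mathcal{O}\cap(\mathcal{B}_{\tilde R_1}\setminus\mathcal{B}_r(u_*))$ yields a \emph{third} solution, distinct from both $u_*$ (outside $\mathcal{B}_r$) and $u^*$ (outside $\mathcal{O}$), and Lemma~\ref{Continuum_Solutions} converts that into the final contradiction. You instead dispose of the $|\mathcal{S}_{t^*}|\geq 3$ case upfront via Lemma~\ref{Continuum_Solutions}, reduce to $\mathcal{S}_{t^*}=\{u_*,u^*\}$, and then the strictness of $u_\alpha$ follows from the case reduction rather than from isolation; this together with the far subsolution $\underline{v}$ from Lemma~\ref{subsol_exist} gives that $u_*$ is the unique fixed point of $K_{t^*}$ in $\mathcal{O}_\alpha\cap\mathcal{B}_{R_1}$, so excision collapses the two degree computations onto the same set and the contradiction $1=0$ is direct. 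Your version trades the paper's "third solution plus segment" argument for a cleaner degree identity, at the modest cost of the case split; both are valid, and your verification that the $K_{t^*}$-invariance argument of Lemma~\ref{multiplicity} extends to the critical level $t=t^*$ (since it only uses the strict super/subsolution inequalities, not $t>t^*$) is exactly the right observation and matches what the paper does implicitly.
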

\begin{proof}
    Assume for contradiction that there exists some $C^1(\overline{\Omega})$--ball $\mathcal{B}_{2r}(u_*)$ such that \begin{equation}\label{assum1}
    \mathcal{S}_{t^*} \cap \mathcal{B}_{2r}(u_*) = \{u_*\}.
    \end{equation}
    Recall that $u^*> u_*$ in $\Omega$ and $\frac{\partial u_*}{\partial \nu} < \frac{\partial u^*}{\partial \nu}$ on $\partial \Omega$.
      Set $u_{\alpha} = u_* + \alpha (u^* - u_*)$. By Corollary \ref{convex_subsuper}, $u_{\alpha}$ is a supersolution of $(\mathcal{P}_{t^*})$ for $\alpha \in [0,1]$ and a subsolution for $\alpha \in \R\setminus [0,1]$. Take $\varepsilon>0$ small enough  that $\overline{v} = u_{\varepsilon}$ and $\underline{v} = u_{-\varepsilon}$ be elements of $\mathcal{B}_r(u_*)$. By our assumption \eqref{assum1}, this implies they are strict sub and supersolutions, respectively. 

     Next, define $\mathcal{O}$ and $K_t = K_{t^*}$ exactly as in the proof of Lemma \ref{multiplicity}, for these $\underline{v}$, $\overline{v}$. Also, define $\bar R$ as in that lemma, and $\Tilde{R_1} = \max\{\bar R, \|\underline{v}\|_{C^1}, \|\overline{v}\|_{C^1},||u_*||_{C^1}+r\}+1$, so that, in particular, $\mathcal{B}_r(u_*) \subset \mathcal{B}_{\Tilde{R_1}}$. For the same reasons as given in the proof of Lemma \ref{multiplicity}, we know that $K_{t^*}(\overline{\mathcal{O}}) \subset \mathcal{O}$, from which we can deduce that $K_{t^*}(\partial (\mathcal{O} \cap \mathcal{B}_{\Tilde{R_1}})) \subset \mathcal{O} \cap \mathcal{B}_{\Tilde{R_1}}$. Therefore, again defining $A(s,v) = sK_{t^*}(v) +(1-s)w$, for some fixed $w \in \mathcal{O}\cap \mathcal{B}_{R_1}$,
    \begin{equation*}
    \text{deg}(I-K_{t^*},\mathcal{O} \cap \mathcal{B}_{\Tilde{R_1}},0) = \text{deg}(I-A(1,\cdot), \mathcal{O} \cap \mathcal{B}_{\Tilde{R_1}},0) =\text{deg}(I-A(0,\cdot), \mathcal{O} \cap \mathcal{B}_{\Tilde{R_1}},0)  = 1.
\end{equation*}

However, if we now define the compact homotopy $B(s,v) = K_s(v)$ for $s \in [t^*-1,t^*]$, we know that there can be no solutions of $u - B(s,u) = 0$ on $[t^*-1,t^*] \times \partial (\mathcal{O} \cap \mathcal{B}_{r}(u_*))$, since $\mathcal{S}_t$ is empty for any $t< t^*$, $u_*\in \mathcal{O}$, \eqref{assum1} holds, and $\partial (\mathcal{O} \cap \mathcal{B}_r(u_*))\subset \mathcal{B}_{2r}(u_*)$. Then, we can calculate the degree of $I - K_{t^*}$ on $\mathcal{O} \cap \mathcal{B}_r(u_*)$, and find
\begin{equation*}
    \text{deg}(I-K_{t^*},\mathcal{O}\cap \mathcal{B}_r(u_*),0) = \text{deg}(I-K_{t^*-1}, \mathcal{O}\cap \mathcal{B}_r(u_*),0) = 0,
\end{equation*}
since \eqref{Main_Decomp} has no solutions for $t = t^*-1$. By the excision property of the degree, then, we can now calculate the degree of $I - K_{t^*}$ on $(\mathcal{O}\cap \mathcal{B}_{\Tilde{R_1}}) \setminus (\mathcal{O}\cap \mathcal{B}_r(u_*)) = \mathcal{O} \cap (\mathcal{B}_{\Tilde{R_1}} \setminus \mathcal{B}_r(u_*))$ and obtain
\begin{equation*}
    \text{deg}(I-K_{t^*},\mathcal{O} \cap (\mathcal{B}_{\Tilde{R_1}} \setminus \mathcal{B}_r(u_*)),0) = 1
\end{equation*}
which implies the existence of a solution of $\mathcal{P}_t$ on that set. Since $u_* \in \mathcal{B}_r(u_*)$ and $u^* \notin \mathcal{O}$, this is a third, distinct solution of $\mathcal{P}_t$, so Lemma \ref{Continuum_Solutions} yields the desired contradiction.
\end{proof}

\begin{proof}[Proof of item 3 in Theorem \ref{Main_Result}]
From the claims we have already proven regarding the upper and lower solution branches, we know
\begin{equation}\label{abov}
    \begin{cases}
        \lim_{t\searrow t^*}\underline{u}_t = \inf_{u\in \mathcal{S}_{t^*}}u = u_*\in \mathcal{S}_{t^*} \\
        \lim_{t\searrow t^*}\overline{u}_t = \sup_{u\in \mathcal{S}_{t^*}}u = u^* \in \mathcal{S}_{t^*}
    \end{cases}
\end{equation}

If $u_*=u^*$, we are done. If not, by Lemma \ref{ordered_sols}, $u_*<u^*$ in $\Omega$. From Lemma \ref{notisolated}, $u_*$ is not an isolated element of $\mathcal{S}_{t^*}$, and so, in particular, $(\mathcal{P}_{t^*})$ has at least three distinct solutions. Lemma \ref{Continuum_Solutions} and \eqref{abov}  imply $\mathcal{S}_{t^*} = \{ u_*+\alpha (u^*-u_*) : \alpha \in [0,1]\}$ as desired.

If sC($F$) holds and $u_*<u^*$, $\mathcal{S}_{t^*} = \{ u_*+\alpha (u^*-u_*) : \alpha \in [0,1]\}$ immediately implies a contradiction. Indeed, for any $\alpha \in (0,1)$, $u_{\alpha} = u_*+\alpha (u^*-u_*) \in C^1(\overline{\Omega})$, and therefore, for some $x_0$ sufficiently close to $\partial\Omega$, $u_{\alpha}(x_0) \in (0, \eta)$ or $u_{\alpha}(x_0) \in (-\eta,0)$ for all $\alpha \in [0,\epsilon]$ and some $\epsilon>0$, which yields by sC($F$)
\begin{equation}
    h(x_0) +t^*\phi(x_0) = F[u_{\alpha}](x_0) < (1-\alpha)F[u_*](x_0) + \alpha F[u^*](x_0) = h(x_0) + t^*\phi(x_0).
\end{equation}
Therefore, sC($F$) implies $u_*=u^*$.
\end{proof}

Finally, we show that the map $h \mapsto t^*(h)$ is continuous. Assume $h_n \rightarrow h$ in $L^p(\Omega)$ and denote $t^*_n = t^*(h_n)$, $t^* = t^*(h)$. We want to prove that $t^*_n \rightarrow t^*$.

First, the sequence $\{t_n^*\}$ is bounded, by Theorem \ref{aprbdtstar}. Then, $t_n^* \rightarrow a \in \R$ up to some subsequence, which we denote again by $t_n^*$. Theorems \ref{aprbd}-\ref{W2P} and
Corollary \ref{Consistency_Cor} imply (taking a further subsequence if necessary) $u_n \rightarrow u$ in $C^1(\overline{\Omega})$, and $u$ solves the limit problem $(\mathcal{P}_{a,h})$. Hence $a \geq t^*(h)$. Suppose by contradiction $a > t^*$. Fix $\varepsilon>0$ such that $a - 3 \varepsilon > t^*$. Therefore there exists $\overline{v}$ which solves $(\mathcal{P}_{a-3\varepsilon,h})$. Also, fix $\underline{v}$ a strict subsolution of $(\mathcal{P}_{t})$ for $t\in [t^*, a]$ given by Lemma \ref{subsol_exist}. In particular, $\underline{v}<\overline{v}$ in $\Omega$ and $\frac{\partial \underline{v}}{\partial \nu} < \frac{\partial \overline{v}}{\partial \nu}$ on $\partial \Omega$.

 Let $w_n$ be a solution of
\begin{equation}
        F_{\infty}[w_n] - \delta w_n = h_n - h \text{ in } \Omega, \qquad
        w_n = 0 \text{ on } \partial \Omega.
\end{equation}
By Remark \ref{properex}, Theorem \ref{W2P} and Sobolev compact inclusions, $w_n \rightarrow 0$ in $C^1(\overline{\Omega})$. In particular, if $n$ is fixed large enough, we have $\overline{v}+w_n>\underline{v}$ in $\Omega$.

Further, applying $\text{D}_{F_{\infty}}(F)$,
\begin{equation}
    F[\overline{v} + w_n]\leq F[\overline{v}] +F_{\infty}[w_n] = h + (a-3\varepsilon)\phi + \delta w_n + h_n - h = h_n + (a-3\varepsilon)\phi + \delta w_n.
\end{equation}
Since $w_n \rightarrow 0$ in $C^1(\overline{\Omega})$, taking $n$ large enough we have $\delta w_n \leq c_0d\le \varepsilon\phi$, so that
\begin{equation}
    F[\overline{v}+w_n] \leq h_n + (a-2\varepsilon)\phi.
\end{equation}
Taking $n$ even larger if necessary, we have $t_n^* > a-\varepsilon$, hence
\begin{equation}
    F[\overline{v}+w_n] \leq h_n + (t_n^* -\varepsilon)\phi,
\end{equation}
i.e. $\overline{v} + w_n$ is a supersolution to $(\mathcal{P}_{t_n^*-\varepsilon,h_n})$. Hence we can find a solution to this problem,  which contradicts the definition of $t_n^* = t^*(h_n)$. Then, every convergent subsequence of $\{t_n^*\}$ converges to $t^*$. Since the sequence is bounded, this implies $t_n^* \rightarrow t^*$, and so the map $h \mapsto t^*(h)$ is continuous from $L^p(\Omega)$ to $\R$.

\section{Appendix}

{\small
In this appendix we record a few elementary properties of convex functions.

\begin{prop}\label{app1}
Let $f:\mathbb{R}^m\to\mathbb{R}$ be a convex and Lipschitz continuous function, with Lipschitz constant $L$. Assume also $f(0)=0$. Define, for $y\in\mathbb{R}^m$,
$$
f_\infty(y) = \sup_{t>0}\frac{f(ty)}{t}\;\left(\le L|y|<\infty\right).
$$
Then
\begin{enumerate}
\item $f_\infty(y) = \displaystyle \lim_{t\to\infty}\frac{f(ty)}{t}$;
\item  $f_\infty :\mathbb{R}^m\to\mathbb{R}$ is convex and Lipschitz, with Lipschitz constant $L$;
\item $f_\infty(ty) = tf_\infty(y)$, for $t>0$, $y\in\mathbb{R}^m$;
\item $f(y+z) - f(y)\le f_\infty(z)$, for  $y,z\in\mathbb{R}^m$.
\end{enumerate}
\end{prop}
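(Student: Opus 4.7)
The plan is to prove the four items in order, observing that item 1 unlocks items 2 and 4, while item 3 is an easy change of variables.

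For item 1, the main observation is monotonicity of $t\mapsto f(ty)/t$ on $(0,\infty)$. Writing $sy = (s/t)(ty) + (1-s/t)\cdot 0$ for $0<s<t$, convexity of $f$ together with $f(0)=0$ yields $f(sy)/s \le f(ty)/t$. Hence the supremum in the definition of $f_\infty$ is attained as the increasing limit $t\to\infty$, and the Lipschitz bound $|f(ty)/t|\le L|y|$ shows the limit is finite, establishing item 1 (and finiteness of $f_\infty$).

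For item 3, write $f_\infty(ty)=\sup_{s>0} f(sty)/s$ and substitute $u=st$ to pull out the factor $t$. For item 2, both properties follow by passing to the limit, using item 1: for convexity, average the inequality $f(t(\alpha y+(1-\alpha)z))\le \alpha f(ty)+(1-\alpha)f(tz)$ by $1/t$ and let $t\to\infty$; for the Lipschitz bound, divide $|f(ty)-f(tz)|\le Lt|y-z|$ by $t$ and pass to the limit.

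The most substantive piece is item 4, which I expect to be the main obstacle — it is the only part where the \emph{base point} $y$ interacts with $z$. The idea is to express $y+z$ as a convex combination involving a tail that scales with a large parameter: $y+z=(1-1/t)y+(1/t)(y+tz)$ for $t>1$. Convexity of $f$ then gives
\[
f(y+z)-f(y)\le \frac{1}{t}\bigl(f(y+tz)-f(y)\bigr).
\]
Sending $t\to\infty$, the term $-f(y)/t$ vanishes, and the Lipschitz bound $|f(y+tz)-f(tz)|\le L|y|$ shows $f(y+tz)/t$ and $f(tz)/t$ have the same limit, namely $f_\infty(z)$ by item 1. This yields item 4. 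The whole proof is a systematic exploitation of the monotonicity observation in item 1 plus one clever convex decomposition for item 4.
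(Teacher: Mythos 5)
Your proof is correct and, for items 1--3, matches the paper's argument exactly (the monotonicity of $t\mapsto f(ty)/t$ via $f(sy)\le(s/t)f(ty)$, then passing to the limit). For item 4, your convex decomposition $y+z=(1-\tfrac1t)y+\tfrac1t(y+tz)$ is the same as the paper's $v+w=sv+(1-s)\bigl(\tfrac{w}{1-s}+v\bigr)$ under the change of parameter $s=1-1/t$, so the key idea is identical; the only difference is how you take the limit. The paper bounds $f\le f_\infty$, uses positive homogeneity to rewrite $(1-s)f_\infty\bigl(\tfrac{w}{1-s}+v\bigr)=f_\infty(w+(1-s)v)$, and then appeals to continuity of $f_\infty$; you instead compare $f(y+tz)$ to $f(tz)$ via the Lipschitz bound $|f(y+tz)-f(tz)|\le L|y|$ and invoke item 1 on $f(tz)/t$. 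Your version is slightly more self-contained (it avoids using items 2--3 inside the proof of item 4), but the two are essentially the same argument.
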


\begin{proof}    Fix $y\in\mathbb{R}^m$ and define, for $t>0$, $g(t) =t^{-1} f(ty)$. Recall that $f(0) = 0$, and so, by convexity, $f(ay) \geq af(y)$ for $a\geq 1$. Then, for $0< s \leq t$,
    \begin{equation}
        g(t) = \frac{1}{t} f(ty) = \frac{1}{t}f\left(s \frac{t}{s}y\right) \geq \frac{1}{t}\frac{t}{s}f(sy) = g(s).
    \end{equation}
    so  $g$ is monotone and bounded, and hence \[f_{\infty}(y) = \sup_{t\in \R^+} g(t) = \lim_{t \rightarrow \infty} g(t).\]
  Properties 2. and 3. follow from 1. To prove 4. note that, by convexity, for any $s \in (0,1)$,
    \begin{equation}
        \begin{split}
            f(v+w)-f(v) \leq sf(v)+(1-s)f\left(\frac{1}{1-s}w + v\right) -f(v)
        \end{split}
    \end{equation}
    and so, passing to the limit,
    \begin{equation}
        \begin{split}
            f(v+w)-f(v) &\leq -f(v) + \lim_{s\rightarrow 1} sf(v)+\lim_{s\rightarrow 1}(1-s)f\left(\frac{1}{1-s}w + v\right)  \\
            &= 0 + \lim_{s\rightarrow 1} (1-s)f\left(\frac{1}{1-s}w + v\right) \\
            & \leq \lim_{s\rightarrow 1} (1-s)f_{\infty}\left(\frac{1}{1-s}w + v\right) \\
            &= \lim_{s\rightarrow 1} f_{\infty}\left(w + (1-s)v\right)=f_{\infty}(w).
        \end{split}
    \end{equation}
    Here, we used the facts that $f(u) \leq f_{\infty}(u)$ and that $f_{\infty}$ is positively $1$-homogeneous. \end{proof}

\begin{prop}\label{app2}
Let $f:\mathbb{R}^m\to\mathbb{R}$ be  convex, and $f(v)= f(w) =a$, for some $v,w\in \mathbb{R}^m$. Set $g(t) = f(v+t(w-v))$ for $t\in \mathbb{R}$. Then
\begin{enumerate}
\item $g(t)\le a$ for $t\in [0,1]$ and $g(t)\ge a$ for $t\in (-\infty,0]\cap [1,\infty)$
\item if there exists $t_0\in (0,1)$ such that $g(t_0)=a$ then $g(t)=a$ for all $t\in [0,1]$.
\end{enumerate}
\end{prop}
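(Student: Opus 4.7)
The key observation is that convexity of $f$ restricted to the line $\{v+t(w-v):t\in\mathbb{R}\}$ makes $g:\mathbb{R}\to\mathbb{R}$ a convex function of a single real variable, with the boundary values $g(0)=f(v)=a$ and $g(1)=f(w)=a$. Everything will follow from one-dimensional convexity applied at suitably chosen collinear triples of points.

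For item 1, I would handle the three cases separately. If $t\in[0,1]$, the convexity inequality
$$g(t)=g\bigl((1-t)\cdot 0+t\cdot 1\bigr)\le (1-t)g(0)+t\,g(1)=a$$
gives the upper bound directly. For $t>1$, I write $1=\tfrac{1}{t}\cdot t+\bigl(1-\tfrac{1}{t}\bigr)\cdot 0$ and apply convexity to get $a=g(1)\le \tfrac{1}{t}g(t)+\bigl(1-\tfrac{1}{t}\bigr)a$, which rearranges to $g(t)\ge a$. Symmetrically, for $t<0$, I write $0=\tfrac{-t}{1-t}\cdot 1+\tfrac{1}{1-t}\cdot t$ and conclude $g(t)\ge a$ in the same manner. (Note that the statement of item~1 contains a typographical $\cap$ that should be read as $\cup$.)

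For item 2, suppose $g(t_0)=a$ for some $t_0\in(0,1)$, and assume for contradiction that $g(s)<a$ for some $s\in[0,1]$. By item 1, $s\in(0,1)$ and $s\ne t_0$. If $s<t_0$, express $t_0$ as a convex combination of $s$ and $1$: there is $\lambda\in(0,1)$ with $t_0=\lambda s+(1-\lambda)\cdot 1$. Convexity then yields
$$a=g(t_0)\le \lambda g(s)+(1-\lambda)g(1)<\lambda a+(1-\lambda)a=a,$$
a contradiction. If instead $s>t_0$, write $t_0$ as a convex combination of $0$ and $s$ and argue identically. Hence $g\equiv a$ on $[0,1]$.

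There is no real obstacle here; the whole proposition is just one-dimensional convexity of $t\mapsto g(t)$ together with the pinning $g(0)=g(1)=a$. The only thing to be careful about is choosing the right convex combinations so that the desired inequality (or its contradictory version in item~2) falls out cleanly, and noting the apparent typo in the statement of item~1.
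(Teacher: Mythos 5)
Your proof is correct and follows essentially the same route as the paper's: reduce to one-dimensional convexity of $g$ with $g(0)=g(1)=a$ and use the appropriate convex combinations (in item 1, writing $1$ resp.\ $0$ as a combination of $t$ and the other endpoint; in item 2, writing $t_0$ as a combination involving the interior point). The only cosmetic difference is that you argue item 2 by contradiction while the paper argues it directly, and you correctly flag the typographical $\cap$ that should read $\cup$.
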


   \begin{proof}
       1.  The first statement is a straightforward application of the definition of convexity. For the second, if $t \leq 0$, write
        \begin{align*}
            a = f(x) = f \left( \frac{1}{1-t}(x+t(y-x)) + \frac{-t}{1-t}y  \right) &\leq \frac{1}{1-t}f(x+t(y-x)) + \frac{-t}{1-t}f(y) \\
            &= \frac{1}{1-t}f(x+t(y-x)) + \frac{-t}{1-t}a
        \end{align*}
        which, after rearrangement, yields $f(x+t(y-x)) \geq a$. If $t \geq 1$, similarly
        \begin{equation*}
            a = f(y) = f \left( \frac{1}{t}(x+t(y-x)) + \frac{t-1}{t}x  \right) \leq \frac{1}{t}f(x+t(y-x)) + \frac{t-1}{t}a
        \end{equation*}
        which again yields the same conclusion.

        2. Fix $t_1\in(0,1)$. Say $t_1<t_0$, then there is $s\in(0,1)$ such that $t_0=st_1+(1-s)$. Hence
        $$
        a= g(t_0) \le sg(t_1) + (1-s)g(1) = s g(t_1) + (1-s)a,$$
        so $g(t_1)\ge a$, and by 1. $g(t_1)= a$.
    \end{proof}
}

\bibliographystyle{abbrv}
\bibliography{biblio1}

 Maria Luísa Pasinato, Institut de Mathématiques,
         Université Paul Sabatier,
         118 route de Narbonne,
         31062 Toulouse Cedex, France
         \smallskip
         
         Boyan Sirakov, Departamento de Matem\'atica, PUC-Rio, Rua Marquês de São Vicente 225, CEP 22451-090, Rio de Janeiro, Brazil
\end{document}